\newtheorem{proposition}{Proposition}
\newtheorem{theorem}{Theorem}
\newtheorem{lemma}{Lemma}
\theoremstyle{definition}
\newtheorem{remark}{Remark}
\newcommand{\argmin}[1]{\underset{#1}{\mathrm{argmin}}}
\newcommand{\argmax}[1]{\underset{#1}{\mathrm{argmax}}}
\newcommand{\ind}[1]{\mathbbm{1}_{\left\{#1\right\}}}
\newcommand{\norm}[1]{\left|\left|#1\right|\right|}
\newcommand{\map}[3]{#1 : #2 \longrightarrow #3}
\newcommand{\set}[2]{\left\{#1 : #2\right\}}
\newcommand{\bsigma}{\boldsymbol{\sigma}}
\newcommand{\boldf}{\boldsymbol{f}}
\newcommand{\defeq}{\vcentcolon=}
\newcommand{\sign}{\mathrm{sign}}
\newcommand{\bd}{\boldsymbol{d}}
\newcommand{\bg}{\boldsymbol{g}}
\newcommand{\bi}{\boldsymbol{i}}
\newcommand{\bx}{\boldsymbol{x}}
\newcommand{\bz}{\boldsymbol{z}}
\newcommand{\prob}{\mathbbm{P}}
\newcommand{\calA}{\mathcal{A}}
\newcommand{\calE}{\mathcal{E}}
\newcommand{\calF}{\mathcal{F}}
\newcommand{\calG}{\mathcal{G}}
\newcommand{\calI}{\mathcal{I}}
\newcommand{\calL}{\mathcal{L}}
\newcommand{\calM}{\mathcal{M}}
\newcommand{\calN}{\mathcal{N}}
\newcommand{\calV}{\mathcal{V}}
\newcommand{\calY}{\mathcal{Y}}
\newcommand{\Bin}{\mathrm{Bin}}
\newcommand{\scdot}{{}\cdot{}}
\newcommand{\bark}{{\bar{k}}}
\newcommand{\bars}{{\bar{s}}}
\newcommand{\N}{\mathbbm{N}}
\newcommand{\R}{\mathbbm{R}}
\newcommand{\e}{\mathrm{e}}
\newcommand{\condp}{P\probarg}
\newcommand{\conde}{E\expectarg}
\DeclarePairedDelimiterX{\probarg}[1]{(}{)}{%
\ifnum\currentgrouptype=16 \else\begingroup\fi
\activatebar#1
\ifnum\currentgrouptype=16 \else\endgroup\fi
}
\DeclarePairedDelimiterX{\expectarg}[1]{[}{]}{%
\ifnum\currentgrouptype=16 \else\begingroup\fi
\activatebar#1
\ifnum\currentgrouptype=16 \else\endgroup\fi
}
\newcommand{\innermid}{\nonscript\;\delimsize\vert\nonscript\;}
\newcommand{\activatebar}{%
\begingroup\lccode`\~=`\|
\lowercase{\endgroup\let~}\innermid 
\mathcode`|=\string"8000
}
\pgfplotsset{
compat = 1.16,
ticklabel style = {font = \footnotesize},
every axis/.append style = {
grid style = {dashed, gray, opacity = 0.2},
label style = {font = \footnotesize}, 
width = 0.6 * \columnwidth,
height = 0.618 * 0.6 * \columnwidth
}
}
\definecolor{britishracinggreen}{rgb}{0.0, 0.26, 0.15}
\definecolor{bostonuniversityred}{rgb}{0.8, 0.0, 0.0}
\definecolor{ceruleanblue}{rgb}{0.16, 0.32, 0.75}
\definecolor{airforceblue}{rgb}{0.36, 0.54, 0.66}
\definecolor{cadmiumgreen}{rgb}{0.0, 0.42, 0.24}
\definecolor{ao(english)}{rgb}{0.0, 0.5, 0.0}
\definecolor{coolblack}{rgb}{0.0, 0.18, 0.39}
\definecolor{byzantine}{rgb}{0.74, 0.2, 0.64}
\definecolor{alizarin}{rgb}{0.82, 0.1, 0.26}
\definecolor{arsenic}{rgb}{0.23, 0.27, 0.29}
\definecolor{cobalt}{rgb}{0.0, 0.28, 0.67}
\definecolor{amber}{rgb}{1.0, 0.75, 0.0}
\title{Semi-supervised Community Detection\\using Glauber Dynamics for an Ising Model \vspace{\baselineskip}}
\author{
\begin{tabular}{ccc}
\normalsize Konstantin Avrachenkov & \hspace{1cm} & \normalsize Diego Goldsztajn \\
\footnotesize Inria & \hspace{1cm} & \footnotesize Universidad ORT Uruguay \\
\footnotesize Sophia Antipolis, France & \hspace{1cm} & \footnotesize Montevideo, Uruguay \\
\scriptsize\texttt{k.avrachenkov@inria.fr} & \hspace{1cm} & \scriptsize\texttt{goldsztajn@ort.edu.uy} \\
\end{tabular}
}
\date{\vspace{\baselineskip} July 24, 2026}
\begin{document}


\maketitle

\noindent\rule{\textwidth}{1pt}

\vspace{2\baselineskip}

\onehalfspacing

\begin{adjustwidth}{0.8cm}{0.8cm}
\begin{center}
\textbf{Abstract}
\end{center}

\vspace{0.3\baselineskip}

\noindent We consider graphs with two communities and analyze an algorithm for learning the community labels when the edges of the graph and only a small fraction of the labels are known in advance. The algorithm is based on the Glauber dynamics for an Ising model where the energy function includes a quadratic penalty on the magnetization. The analysis focuses on graphs sampled from a Stochastic Block Model (SBM) with slowly growing mean degree. We derive a mean-field limit for the magnetization of each community, which can be used to choose the run-time of the algorithm to obtain a target accuracy level. We further prove that almost exact recovery is achieved in a number of iterations that is quasi-linear in the number of nodes. As a special case, our results provide the first rigorous analysis of the label propagation algorithm in the SBM with slowly diverging mean degree. We complement our theoretical results with several numerical experiments.

\vspace{\baselineskip}

\small{\noindent \textit{Key words:} semi-supervised learning, community detection, stochastic block model, Glauber dynamics, Ising model, mean-field limit, almost exact recovery.}

\vspace{0.3\baselineskip}
\small{\noindent Authors are listed alphabetically. Part of this work was done while Diego Goldsztajn was with Inria, the other part was carried out while he was with Universidad ORT Uruguay and was supported by ANII Uruguay under fellowship PD\_NAC\_2024\_182118. This paper has been accepted for publication at Bernoulli.} 
\end{adjustwidth}

\newpage

\section{Introduction}
\label{sec: introduction}

Graph clustering or community detection is a central problem in machine learning, with applications in social \cite{wasserman1994social,girvan2002community,bedi2016community} and biological \cite{bhowmick2015clustering,brohee2006evaluation} networks, bibliometrics \cite{chen2010community,vsubelj2016clustering} and image processing \cite{shi2000normalized,tolliver2006graph}, among many others. In the present paper, we study the problem of clustering the nodes of a graph into two communities when a small fraction of community labels are disclosed; this is commonly referred to as semi-supervised learning. We consider the Stochastic Block Model (SBM) with two communities, a fundamental probabilistic model where nodes in the same community are connected with higher probability than nodes in opposite communities. Also, we focus on limiting regimes where the mean degree diverges at an arbitrarily low rate and the fraction of disclosed labels remains fixed.

Our algorithm identifies the binary community labels with positive or negative spins. It is initialized by assigning the correct spin to the nodes with disclosed labels and uniformly random spins to all the other nodes. Guided by the structure of the maximum likelihood estimator for the true configuration of spins, we define an energy function on the space of these configurations. The first term of the energy function corresponds to a standard Ising model, but we also have a term that depends quadratically on the sum of the spins, penalizing the monochromatic configurations.  Our algorithm uses the continuous-time Glauber dynamics associated with the energy function to modify the spins over time. Specifically, each spin flips according to an independent Poisson process with a rate that depends on the energy reduction caused by the flip, in a way that tends to reduce the energy. We prove that this drives the configuration of spins to the true configuration, which can be regarded as a local minimum of the energy function.

\subsection{Overview of main results}
\label{sub: overview of main results}

Let $n$ be a scaling parameter, with the total number of nodes $\Theta(n)$, and suppose that the mean degree scales as $\lambda_n$. Our algorithm has two hyperparameters: $\alpha_n$ is the weight of the quadratic penalty and $\beta_n$ is the inverse temperature of the Glauber dynamics. We assume that $\lambda_n \to \infty$ and $\beta_n \lambda_n \to \infty$ as $n \to \infty$, and show that our algorithm is effective if $\alpha_n$ is in a range of values given by the connectivity parameters of the SBM, which can be estimated using the revealed labels. We obtain the following results.
\begin{enumerate}
	\item \emph{Mean-field limit.} We establish in Theorem \ref{the: mean-field limit} that the vector-valued process which describes the magnetization of each community converges weakly to the solution of a differential equation. This result can be of independent interest for the probability and statistical physics communities studying the behavior of the Ising model on random structures.
	
	\item \emph{Partial recovery.}
	Since the classification error can be expressed in terms of the magnetization vector, the differential equation can be used to set the simulation time $t_{\mathrm{end}}$ of the Glauber dynamics so that the final error is below a given threshold. Given the target error and the simulation time, our algorithm involves $O(nt_{\mathrm{end}})$ spin flips. To the best of our knowledge, we are the first to characterize the effort needed by a graph clustering algorithm to achieve a target level of accuracy.
	
	\item \emph{Almost exact recovery.} We show in Theorem \ref{the: almost exact recovery} that a vanishing fraction of incorrect community labels is obtained if the simulation time diverges slowly with $n$ instead of remaining constant. In this case, our algorithm involves $O(n \log \lambda_n)$ spin flips.
	
	\item \emph{Analysis of Label Propagation.} The above results cover the special case where $\alpha_n = 0$ and $\beta_n = \infty$ for all $n$, which corresponds to the Label Propagation or Majority Vote algorithm proposed for clustering problems in \cite{raghavan2007near}. The interdependence of nonlinear dynamics and graph topology creates significant challenges for the analysis on random graphs. To the best of our knowledge, we are the first to rigorously analyze this algorithm in the SBM with slowly diverging mean degree.
	
	\item \emph{Numerical results.} The range of admissible values for $\alpha_n$ is an interval that typically contains zero; the more symmetric the community sizes are, the larger this interval is. While our asymptotic results hold as long as $\alpha_n$ lies in this interval, we observe numerically that our algorithm performs better when the quadratic penalty is active with $\alpha_n > 0$; we explain this intuitively relying on our proof arguments. We also show numerically that our algorithm is faster than Belief Propagation \cite{decelle2011asymptotic} and performs better than other semi-supervised algorithms with similar complexity.
\end{enumerate}

Mean-field limits are unusual in the clustering literature, but as noted above, have the advantage of characterizing how the classification error evolves over time, which can be leveraged to estimate the time required by the algorithm to achieve a target accuracy level. Standard mean-field arguments can be used to describe the macroscopic behavior of particles interacting in a way that makes them statistically exchangeable; see the pioneering papers \cite{barbour1980density,kurtz1970solutions,kurtz1971limit,kurtz1978strong}. In contrast, the spins that we consider are not exchangeable because the flip rate of a spin depends on the graph neighborhood, which varies across the spins. A further challenge is that the random graph and the current configuration of spins are not independent since the flip rates of the spins depend on the graph structure. The analysis of our algorithm is even further complicated by the fact that the flip rates depend on the sums of neighboring spins in a way that becomes discontinuous in the limit as $n \to \infty$.

Our proofs use concentration inequalities to approximate the sums of neighboring spins by quantities that depend on the magnetizations of the communities and the connectivity parameters of the graph. We prove that these approximations become exact in the limit, and allow to describe the evolution of the magnetization vector through a differential equation. The almost exact recovery result does not follow from the mean-field limit, which concerns process-level convergence over finite intervals of time, and requires additional arguments for analyzing the Glauber dynamics over a diverging time horizon.

\subsection{Related work}
\label{sub: related work}

Many theoretical and practical aspects of graph clustering have been comprehensively discussed in surveys and books; see \cite{abbe2018community,schaeffer2007graph,fortunato2010community,newman2018networks,menczer2020first,avrachenkov2022statistical}. The SBM that we consider here is an inhomogeneous Erd\H{o}s-R\'enyi random graph model, a simple but fundamental model for communities in a graph that has been extensively used in the context of clustering. This model is usually analyzed in limiting regimes where the number of nodes goes to infinity, and clustering problems are classified in terms of the fraction of incorrect community labels considered admissible. In this paper we focus on partial and almost exact recovery problems. The former require that a given (and typically large) fraction of labels are correctly identified with probability tending to one, and the latter that all but a vanishing fraction of labels are correct. Other objectives are exact recovery, which involves correctly identifying all the community labels with high probability, and weak recovery, which only requires that the learned labels are positively correlated with the true labels and is most relevant in the constant-degree regime.

\subsubsection{Graph-based semi-supervised learning}
\label{subsub: semi-supervised learning}

Many papers have considered clustering problems in the unsupervised case where no labels are disclosed in advance. For this situation we refer to the foundational works \cite{abbe2015exact,mossel2015reconstruction,mossel2018proof,yun2014community}, and the survey \cite{abbe2018community}, where the information-theoretic limits and efficient algorithms for unsupervised clustering in the SBM are extensively discussed. Unlike this literature, we consider the semi-supervised setting where a small fraction of community labels are revealed in advance. This situation is prevalent in practice and can significantly improve the efficacy and efficiency of graph clustering, as observed in \cite{zhu2003ssl,zhou2003learning,chapelle2006ssl,vanengelen2020survey,song2022graph}.

In the context of the SBM, information-theoretic limits for exact recovery with side information and logarithmic mean degree are derived in \cite{saad2018community}, which further proposes a clustering algorithm based on eigenelements. Another semi-supervised algorithm, based on a constrained linear system, is defined in \cite{avrachenkov2020almost} and is shown to achieve almost exact recovery if the mean degree diverges at any rate. Both algorithms have polynomial complexity in the number of nodes; significantly larger than our quasi-linear complexity. Recently, \cite{xing2023almost} showed that almost exact recovery in the SBM is achieved by a gossiping algorithm in quasi-linear time. However, in \cite{xing2023almost} the average degree is super-logarithmic in the number of nodes, and the density of intra-cluster edges grows faster than that of inter-cluster edges, making the setup substantially less challenging than in the present paper.

A class of semi-supervised consensus and label propagation algorithms converge in quasi-linear time and have shown remarkable efficacy on both synthetic and real-world data \cite{zhu2003ssl,zhou2003learning,raghavan2007near,garza2019community}; however, their classification error has not been rigorously characterized to the best of our knowledge. Since our algorithm also has quasi-linear time complexity, our numerical study compares our algorithm against the latter class. Specifically, we consider consensus-based algorithms \cite{zhu2003ssl}, generalized Laplacian-based algorithms \cite{zhou2003learning,avrachenkov2012generalized} (including a PageRank based algorithm) and the Poisson Learning algorithm \cite{calder2020poisson}. Our numerical results show that our algorithm achieves a much smaller classification error than all these algorithms, and in addition needs to perform considerably fewer updates per node.

\subsubsection{Ising models on random graphs}
\label{subsub: ising models on random graphs}

Using Ising models and Glauber dynamics for graph clustering has been proposed and empirically evaluated for unsupervised problems in \cite{reichardt2006statistical}. In particular, it was observed that the energy function with the penalty on magnetization corresponds to the modularity of the network (as defined in \cite{newman2006modularity}) with the Erd\H{o}s-R\'enyi graph as a null model. More recently, \cite{liu2024locally} has proposed similar Glauber dynamics to achieve weak recovery in the constant-degree symmetric SBM. In \cite{liu2024locally} the information-theoretic limit is not reached and
the run-time is roughly of order $n^4$ in the number of nodes, ignoring polylogarithmic factors; the authors write that this estimate seems conservative and that a quasi-linear time in $n$ should be sufficient for weak recovery. Thus, algorithms based on Glauber dynamics for Ising models are also of interest in the contexts of unsupervised learning and weak recovery, making them widely applicable for various graph clustering tasks.

In the context of sparse, tree-like graphs, we would like to mention a series of papers \cite{kanade2016global,mossel2016local,yu2024ising} on local Belief Propagation (BP) and Ising model that address the problem of community detection in the symmetric SBM with side information. Specifically, the authors of \cite{kanade2016global} study the SBM with side information in the form of binary erasure channel and prove that in some regimes the local BP algorithm achieves the optimal expected fraction of correctly estimated labels. The authors of \cite{mossel2016local} proved the optimality of the BP algorithm in certain regimes for side information in the form of binary symmetric channel. Finally, the authors of \cite{yu2024ising} closed the gaps, proving the optimality of the BP algorithm for all tree-like SBMs and a very general form of side information. We remark that our approach works for both tree-like and denser graphs as long as the average degree diverges.

The Glauber dynamics for the standard Ising model at zero temperature correspond to the majority vote or label propagation algorithm; as noted earlier, this is the same as setting $\alpha_n = 0$ and $\beta_n = \infty$ in our model. Since the initial proposal to apply the label propagation algorithm to clustering in \cite{raghavan2007near}, many variations have been proposed and tested on synthetic and real-world data; see, e.g.,\cite{harenberg2014community}. Nevertheless, we are not aware of rigorous analytical results besides those in \cite{kothapalli2013analysis} showing that exact recovery is achieved in two rounds of the algorithm under suitable connectivity conditions in the SBM. In particular, in \cite{kothapalli2013analysis} the intra-cluster mean degree grows faster than $n^{3/4}$ and the inter-cluster mean degree grows slower than $n^{1/2}$. We remark that the regime considered in the present paper is significantly more challenging, since the intra and inter-cluster mean degrees are of the same order and approach infinity arbitrarily slowly. A sampling version of the label propagation algorithm, where updates are based on two random neighbors, has been analyzed in \cite{cruciani2019distributed} for the SBM. There, the intra-cluster mean degree must grow faster than $n^{1/2}$ and the cut is quite sparse, which makes the regime less challenging than that considered here.

The series of papers \cite{hollander2019glauber,dommers2010ising,dommers2017metastability,dommers2017Bmetastability} studies metastability of the Glauber dynamics for the standard Ising model on various random graph models. However, there are at least three important differences with the present paper. Firstly, the Ising model in the former papers is purely ferromagnetic, whereas we analyze a more complicated model with ferromagnetic as well as antiferromagnetic interactions. Secondly, \cite{hollander2019glauber,dommers2010ising,dommers2017metastability,dommers2017Bmetastability} do not study clustered models such as the SBM. Lastly, the latter papers study the asymptotics of the mean crossover time from one metastable state to another, whereas we characterize the transient trajectory towards a set of states with good properties for identifying clusters.

Finally, we observe that several papers \cite{dembo2010ising,mossel2013exact,lubetzky2017universality} have obtained conditions for rapid mixing and cutoff for the Ising model on random graphs, but these phenomena can only appear at sufficiently small inverse temperatures. Furthermore, as noted above, our results are based on the transient behavior of the Glauber dynamics prior to mixing. We feel that this different approach can present significant interest to researchers working in the areas of statistical physics and interacting particle systems.

\subsection{Standard notation}
\label{sub: standard notation}

We use the symbols $P$ and $E$ to denote the probability of events and the expectation of functions, respectively. The underlying probability measure to which these symbols refer is always clear from the context or explicitly indicated. For random variables in a common metric space $S$, we denote the weak convergence of $\set{X_n}{n \geq 1}$ to $X$ by $X_n \Rightarrow X$. If the limiting random variable $X$ is deterministic, then this is equivalent to convergence in probability. The left and right limits of a function $\map{f}{[0, \infty)}{S}$ are denoted by
\begin{equation*}
	f\left(t^-\right) \defeq \lim_{s \to t^-} f(s) \quad \text{for all} \quad t > 0 \quad \text{and} \quad f\left(t^+\right) \defeq \lim_{s \to t^+} f(s) \quad \text{for all} \quad t \geq 0,
\end{equation*}
respectively. We say that the function $f$ is c\`adl\`ag if the left limits exist for all $t > 0$ and the right limits exist for all $t \geq 0$ and satisfy that $f\left(t^+\right) = f(t)$.

\subsection{Organization of the paper}
\label{sub: organization of the paper}

The rest of the paper is organized as follows. In Section \ref{sec: problem formulation} we give precise descriptions of the semi-supervised community detection problem and our algorithm. In Section \ref{sec: main results} we state our main results: a mean-field limit for the magnetization of each community and the almost exact recovery result. In Section \ref{sec: mean-field approximation} we introduce mean-field approximations that are used in the proofs of the main results. The main results are proved in Sections \ref{sec: mean-field limit} and \ref{sec: almost exact recovery}, respectively. In Section \ref{sec: approximation errors} we prove a key result used to establish the main results. We present and discuss several numerical experiments in Section \ref{sec: simulations} and conclude the paper in Section~\ref{sec: conclusion} with open problems. Appendix \ref{app: table of notation} contains a table of notation, whereas the proofs of several auxiliary results are given in Appendices \ref{app: tightness}, \ref{app: concentration inequalities} and \ref{app: other intermediate results}.

\section{Problem formulation}
\label{sec: problem formulation}

We consider a sequence of simple and undirected graphs $\calG_n = \left(\calV_n, \calE_n\right)$. For each graph, the set of nodes $\calV_n = \calV_n^1 \cup \calV_n^2$ is split into two disjoint communities with sizes $V_n^k \defeq |\calV_n^k|$. In this paper we use the terms \emph{community} and \emph{cluster} interchangeably. We emphasize that $n$ is just a scaling parameter and $V_n \defeq |\calV_n| = V_n^1 + V_n^2$ is the total number of nodes, which scales linearly with $n$. More specifically, we assume that
\begin{equation*}
v^k \defeq \lim_{n \to \infty} \frac{V_n^k}{n} \in (0, \infty) \quad \text{for all} \quad k \in \{1, 2\}.
\end{equation*}
Each graph $\calG_n$ is sampled from a stochastic block model where the intra-community and inter-community edge probabilities are given by
\begin{equation*}
a_n \defeq \frac{a \lambda_n}{n} \quad \text{and} \quad b_n \defeq \frac{b \lambda_n}{n},
\end{equation*}
respectively, where $\lambda_n \to \infty$ as $n \to \infty$ and $a > b > 0$ are constants. Specifically, if $u \in \calV_n^k$ and $v \in \calV_n^l$ are two nodes, then the probability that they are connected by an edge is
\begin{equation*}
p_n(k, l) \defeq
\begin{cases}
a_n & \text{if} \quad k = l, \\
b_n & \text{if} \quad k \neq l.
\end{cases}
\end{equation*}

Suppose that we do not know in which community each node is except for a small fraction of the nodes. Our objective is to learn almost all the community labels from the small fraction of labels that are known in advance and the edges of the graph.

\subsection{Maximum likelihood estimator and Ising model}
\label{sub: ising model}

A configuration or labeling $\map{\sigma}{\calV_n}{\{-1, 1\}}$ is a mapping that assigns a community label to each node. We denote the space of all configurations on $\calG_n$ by $\Sigma_n$ and use the term \emph{spin}, from the interacting particle systems literature, instead of \emph{community label}. Our approach is motivated by the next standard result, which is proved in Appendix \ref{app: other intermediate results} and provides an expression for the maximum likelihood estimator of the true configuration.

\begin{proposition}
\label{prop: maximum likelihood estimator}
Let us define $p_\calG(\sigma) \defeq \condp*{\calG_n = \calG | \calV_n^1 = \sigma^{-1}\left(1\right)\ \text{and}\ \calV_n^2 = \sigma^{-1}\left(-1\right)}$ for each graph $\calG = (\calV_n, \calE)$ and configuration $\sigma \in \Sigma_n$. Then the maximum likelihood estimator can be expressed as a solution of the following optimization problem:
\begin{equation}
\label{eq: maximum likelihood estimator}
\argmax{\sigma \in \Sigma_n}\ p_\calG(\sigma) = \argmin{\sigma \in \Sigma_n} \left\{-\frac{1}{2}\sum_{u \sim v} \sigma(u)\sigma(v) + \frac{\rho_n}{2}\left[\sum_{u \in \calV_n} \sigma(u)\right]^2\right\},
\end{equation}
where $\rho_n \defeq [\log(1 - a_n) - \log(1 - b_n)] / [\log(b_n(1 - a_n)) - \log(a_n(1 - b_n))]$ and $u \sim v$ indicates that the nodes are connected by an edge in the graph $\calG$.
\end{proposition}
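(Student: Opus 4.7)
The plan is to compute $p_{\calG}(\sigma)$ explicitly and manipulate its logarithm until the only $\sigma$-dependent contribution is the objective on the right-hand side of \eqref{eq: maximum likelihood estimator}. Conditionally on the partition induced by $\sigma$, the edges of $\calG_n$ are independent Bernoulli variables, with parameter $a_n$ between nodes of the same label and $b_n$ between nodes of different labels. Writing $E = |\calE|$, grouping the $\binom{V_n}{2}$ pairs according to whether they form an edge and whether they share a label, I would express
\begin{equation*}
\log p_{\calG}(\sigma) = E_S^\sigma \log a_n + E_D^\sigma \log b_n + (N_S^\sigma - E_S^\sigma)\log(1-a_n) + (N_D^\sigma - E_D^\sigma)\log(1-b_n),
\end{equation*}
where $E_S^\sigma$, $E_D^\sigma$ count the edges within and between the two clusters defined by $\sigma$, and $N_S^\sigma$, $N_D^\sigma$ count all such pairs (edges or not).

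The next step is to linearize these counts in terms of $\sigma$ using the identities
\begin{equation*}
\ind{\sigma(u) = \sigma(v)} = \frac{1 + \sigma(u)\sigma(v)}{2}, \qquad \ind{\sigma(u) \neq \sigma(v)} = \frac{1 - \sigma(u)\sigma(v)}{2},
\end{equation*}
so that $E_S^\sigma - E_D^\sigma = \sum_{u \sim v}\sigma(u)\sigma(v)/2$ (with the unordered convention, doubled to match the factor in the statement) and $N_S^\sigma - N_D^\sigma = \tfrac{1}{2}(M(\sigma)^2 - V_n)$, where $M(\sigma) \defeq \sum_u \sigma(u)$; here I use $\sigma(u)^2 = 1$. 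The sums $E_S^\sigma + E_D^\sigma = E$ and $N_S^\sigma + N_D^\sigma = \binom{V_n}{2}$ do not depend on $\sigma$. Substituting these expressions and regrouping, the $\sigma$-dependent part of $\log p_{\calG}(\sigma)$ collapses to
\begin{equation*}
\frac{1}{4}\log\frac{a_n(1-b_n)}{b_n(1-a_n)}\sum_{u \sim v} \sigma(u)\sigma(v) + \frac{1}{4}\bigl[\log(1-a_n) - \log(1-b_n)\bigr]M(\sigma)^2,
\end{equation*}
while every other term is a constant depending only on $E$, $V_n$, $a_n$, $b_n$.

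Since $a > b > 0$ forces $a_n(1-b_n) > b_n(1-a_n)$, the coefficient of the edge sum is strictly positive, so $\argmax p_{\calG}$ is unchanged upon dividing through by it. Dividing yields exactly
\begin{equation*}
\sum_{u \sim v} \sigma(u)\sigma(v) \;-\; \rho_n\, M(\sigma)^2,
\end{equation*}
with $\rho_n$ as defined in the statement; note that both the numerator and denominator of $\rho_n$ are negative, so $\rho_n > 0$, and the sign matches after negating to turn the maximization into a minimization. Multiplying by $-1/2$ recovers the energy function in \eqref{eq: maximum likelihood estimator}. The proof is entirely a bookkeeping exercise; the only mildly delicate point is keeping track of signs and the factor of two that arises from treating $u \sim v$ as an ordered sum, and I would simply verify this once carefully at the end.
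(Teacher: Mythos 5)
Your proof is correct and takes essentially the same approach as the paper: compute the log-likelihood, apply the identities $\ind{\sigma(u)=\sigma(v)} = (1+\sigma(u)\sigma(v))/2$ and $\ind{\sigma(u)\neq\sigma(v)} = (1-\sigma(u)\sigma(v))/2$, isolate the $\sigma$-dependent part, and normalize by the (sign-definite) coefficient of the edge term. The paper works directly with the per-pair products $a_n^{A_{uv}}(1-a_n)^{1-A_{uv}}$ rather than introducing the aggregate counts $E_S^\sigma, E_D^\sigma, N_S^\sigma, N_D^\sigma$, but this is purely a presentational difference; your bookkeeping of the $1/4$ factors, the identity $\sum_{u<v}\sigma(u)\sigma(v) = \tfrac{1}{2}(M(\sigma)^2 - V_n)$, and the sign analysis of $\rho_n$ all check out.
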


As above, we write $u \sim v$ if $u$ and $v$ are connected in $\calG_n$, i.e., if $\{u, v\} \in \calE_n$. Given $\alpha \in \R$, let $\alpha_n \defeq \alpha \lambda_n / n$ and define an energy (Hamiltonian) function $\map{H_n}{\Sigma_n}{\R}$ by
\begin{equation*}
H_n(\sigma) \defeq - \frac{1}{2}\sum_{u \sim v} \sigma(u)\sigma(v) + \frac{\alpha_n}{2} \left[\sum_{u \in \calV_n} \sigma(u)\right]^2 \quad \text{for all} \quad \sigma \in \Sigma_n.
\end{equation*}
This expression coincides with the objective of \eqref{eq: maximum likelihood estimator} when $\alpha_n = \rho_n$. However, we let $\alpha$ be a hyperparameter since $\rho_n$ is difficult to estimate exactly. Another hyperparameter of our algorithm is the so-called inverse temperature $\beta_n > 0$. Together with the energy function, $\beta_n$ defines a Gibbs probability measure on the space of configurations $\Sigma_n$. More specifically, the probability of configuration $\sigma$ with respect to this measure is
\begin{equation}
\label{eq: gibbs probability measure}
\left[\sum_{\sigma' \in \Sigma_n} \e^{-\beta_nH_n(\sigma')}\right]^{-1}\e^{-\beta_nH_n(\sigma)}.
\end{equation}

Our energy $H_n$ and the above Gibbs probability measure are closely related to the standard Ising model from statistical mechanics and the generalized modularity \cite{reichardt2006statistical}. In particular, the first term of $H_n$ appears in the standard energy function for the Ising model; but this energy does not include a quadratic term in the sum of the spins. The first term of $H_n$ tends to be smaller when neighboring nodes have the same spin and is minimal if all the spins have the same sign. However, it is also relatively small for the configurations where the spin is constant within each community and opposite across the communities, since the edge density is higher within the communities. If $\alpha > 0$, then the second term of $H_n$ penalizes the configurations where the sum of the spins is far from zero, and in particular the configurations where the spin is constant over the entire graph.

\subsection{Glauber dynamics}
\label{sub: glauber dynamics}

Suppose that the spin of $u \in \calV_n$ flips and let $\sigma(u)$ denote the spin of $u$ before the change occurs. As a result of this flip, the energy of configuration $\sigma \in \Sigma_n$  changes by
\begin{align*}
	&-\frac{1}{2}\sum_{\substack{v \sim w\\ v, w \neq u}} \sigma(v)\sigma(w) + \sum_{v \sim u} \sigma(u) \sigma(v) + \frac{\alpha_n}{2}\left[V_n + 2 \sum_{\substack{v < w\\ v, w \neq u}} \sigma(v) \sigma(w) - 2 \sum_{v \neq u} \sigma(u) \sigma(v)\right] \\
	&+ \frac{1}{2}\sum_{\substack{v \sim w\\ v, w \neq u}} \sigma(v)\sigma(w) + \sum_{v \sim u} \sigma(u) \sigma(v) - \frac{\alpha_n}{2}\left[V_n + 2 \sum_{\substack{v < w\\ v, w \neq u}} \sigma(v) \sigma(w) + 2 \sum_{v \neq u} \sigma(u) \sigma(v)\right].
\end{align*}
The first line is the energy after the spin flips, whereas the second line gives the opposite of the energy before the flip. Thus, the change in the energy is: 
\begin{equation}
	\label{eq: energy variation}
	\Delta_n(\sigma, u) = 2 \sigma(u) \left[\sum_{v \sim u} \sigma(v) - \alpha_n \sum_{v \neq u} \sigma(v)\right].
\end{equation}
The continuous-time Glauber dynamics are the Markov chain $\set{\bsigma_n(t)}{t \geq 0}$ on the space of configurations $\Sigma_n$ such that the spin of each node $u$ flips independently at rate
\begin{equation}
\label{eq: flip rate when beta finite}
r\left(\beta_n \Delta_n(\bsigma_n, u)\right) \quad \text{with} \quad r(x) \defeq \frac{1}{1 + \e^x} \quad \text{for all} \quad x \in \R; 
\end{equation}
throughout the paper we use boldface notation for quantities that evolve over time. In other words, a transition between two configurations $\sigma, \rho \in \Sigma_n$ may occur only if the two configurations differ exactly at one node. In that case a transition from $\sigma$ to $\rho$ happens at rate $r\left(\beta_n \left(H_n(\rho) - H_n(\sigma)\right)\right)$. It is not difficult to check that the Markov chain defined in this way is reversible and ergodic with stationary distribution \eqref{eq: gibbs probability measure}.

\begin{remark}
\label{rem: discrete-time glauber dynamics}
A discrete-time version of the Glauber dynamics can also be considered. In this version a node $u \in \calV_n$ is selected uniformly at random at each time $t$, and then the spin of this node flips with probability $r\left(\beta_n \Delta_n\left(\bsigma_n(t), u\right)\right)$. The two versions of the dynamics are closely related. Indeed, suppose that the configuration at time $t$ is given. Then the configuration reached after a spin flips has the same distribution under both versions of the dynamics. Moreover, let $\tau_n$ be the duration of a time slot in the discrete-time version. Then it is not difficult to check that the mean amount of time until a spin flips is
\begin{equation*}
\tau_n V_n\left[\sum_{u \in \calV_n} r\left(\beta_n \Delta_n \left(\bsigma_n(t), u\right)\right)\right]^{-1} \quad \text{and} \quad \left[\sum_{u \in \calV_n} r\left(\beta_n \Delta_n \left(\bsigma_n(t), u\right)\right)\right]^{-1},
\end{equation*}
for the discrete-time and the continuous-time versions, respectively.
\end{remark}

We also consider the situation when $\beta_n = \infty$ under the following convention:
\begin{equation}
\label{eq: convention for infinite inverse temperature}
r(\beta_n x) \defeq \ind{x < 0} + \frac{1}{2}\ind{x = 0} \quad \text{for all} \quad x \in \R \quad \text{if} \quad \beta_n = \infty.
\end{equation}
In this case the spin of $u \in \calV_n$ cannot flip when $\Delta_n\left(\bsigma_n, u\right) > 0$. However, the spin flips at rate $1 / 2$ if this does not change the energy and at unit rate when this reduces the energy. This corresponds to the Label Propagation or Majority Vote algorithm in \cite{raghavan2007near} when $\alpha = 0$, i.e., nodes change their spin to the value that is most popular among their neighbors.

\subsection{Semi-supervised learning algorithm}
\label{sub: semi-supervised learning algorithm}

Algorithm \ref{alg: algorithm} outlines our semi-supervised method for learning the community labels when a small fraction $\eta \in (0, 1)$ of the labels are revealed in advance. The inputs of the algorithm are the list of community labels that are disclosed in advance and the simulation time $t_{\mathrm{end}}$ for running the Glauber dynamics.

\SetKwInput{Parameter}{Parameters}
\begin{algorithm}
	
	\KwIn{disclosed community labels (for a fraction $\eta \in (0, 1)$ of the nodes) and total simulation time $t_{\mathrm{end}} \geq 0$}
	
	\Parameter{hyperparameter $\alpha_n \in \R$ and inverse temperature $\beta_n \in (0,\infty]$}
	
	\For{$u \in \calV_n$}{
		
		\uIf{\normalfont community label disclosed}{
			check community label and set $\bsigma_n(0, u) = 1$ if $u \in \calV_n^1$ or $\bsigma_n(0, u) = -1$ if $u \in \calV_n^2$;
		}\Else{
			let $\bsigma_n(0, u)$ be uniformly random in $\{-1, 1\}$;
		}
	}
	
	\While{$t \leq t_{\mathrm{end}}$}{
		simulate Glauber dynamics $\bsigma_n$ with flip rates \eqref{eq: flip rate when beta finite} if $\beta_n < \infty$ or \eqref{eq: convention for infinite inverse temperature} if $\beta_n = \infty$ (disclosed spins may change);
	}
	
	\KwOut{community labels given by $\bsigma_n(t_{\mathrm{end}})$}
	
	\vspace{3mm}
	
	\caption{Semi-supervised algorithm for learning the community labels.}
	\label{alg: algorithm}
\end{algorithm}

To simplify the exposition, we will assume that each node has probability $\eta$ of having its label disclosed, independently from all the other nodes. However, our results hold more generally: when the disclosed labels satisfy the statement of Lemma \ref{lem: initial conditions} of Appendix \ref{app: other intermediate results}. The simulation time for the Glauber dynamics can be selected as $t_{\mathrm{end}} = \log [2(1 - \eta) / \varepsilon]$ if the relative classification error should be lower than $\varepsilon$. This choice will be justified using our mean-field limit, as we explain in Section \ref{sec: main results}. Without any loss of generality, the correct labeling corresponds to nodes in community $\calV_n^1$ having positive spin and nodes in community $\calV_n^2$ having negative spin. If $\alpha = 0$ and nodes have access to the spins of neighbors, then Algorithm \ref{alg: algorithm} can be implemented in an entirely distributed manner. Otherwise, we must keep a common variable for the total magnetization.

Informally speaking, the Glauber dynamics simulated in Algorithm \ref{alg: algorithm} modify the spins of the nodes in a way that tends to reduce the energy $H_n$, and the configuration where the spins are positive in $\calV_n^1$ and negative in $\calV_n^2$ can be regarded as a local minimum of $H_n$. In Section \ref{sec: main results} we establish that the Glauber dynamics drive the initial configuration $\bsigma_n(0)$ constructed in the first steps of the algorithm towards the latter local minimum, provided that the graph is sufficiently large and $\alpha$ is selected suitably.

\section{Main results}
\label{sec: main results}

Before stating the main results, we define some notation that will be used in the rest of the paper. We denote the neighborhood of $u \in \calV_n$ and its intersection with $\calV_n^k$ by
\begin{equation*}
\calN_n(u) \defeq \set{v \in \calV_n}{v \sim u} \quad \text{and} \quad \calN_n^k(u) \defeq \set{v \in \calV_n^k}{v \sim u},
\end{equation*}
respectively, and let $N_n(u) \defeq |\calN_n(u)|$ and $N_n^k(u) \defeq |\calN_n^k(u)|$. Further, we define
\begin{equation*}
\calY_s^k(\sigma) \defeq \set{v \in \calV_n^k}{\sigma(v) = s} \quad \text{and} \quad \hat{\calY}_s^k(\sigma, u) \defeq \set{v \in \calY_s^k(\sigma)}{v \sim u}
\end{equation*}
for all $s \in \{-, +\}$ and $\sigma \in \Sigma_n$, and let $Y_s^k(\sigma) \defeq |\calY_s^k(\sigma)|$ and $\hat{Y}_s^k(\sigma, u) \defeq |\hat{\calY}_s^k(\sigma, u)|$. In addition, the (normalized) magnetization of $\calV_n^k$ under configuration $\sigma \in \Sigma_n$ is defined as
\begin{equation*}
z^k(\sigma) \defeq \frac{1}{V_n^k}\sum_{u \in \calV_n^k} \sigma(u) = \frac{Y_+^k(\sigma) - Y_-^k(\sigma)}{V_n^k} \in [-1, 1] \quad \text{and} \quad \bz_n^k(t) \defeq z^k\left(\bsigma_n(t)\right).
\end{equation*}

The vector-valued magnetization process $\bz_n = (\bz_n^1, \bz_n^2)$ takes values in $D_{\R^2}[0, \infty)$, the space of c\`adl\`ag functions from $[0, \infty)$ into $\R^2$, which we endow with the topology of uniform convergence over compact sets. The following mean-field limit is proved in Section \ref{sec: mean-field limit}.

\begin{figure}
\centering
\includegraphics{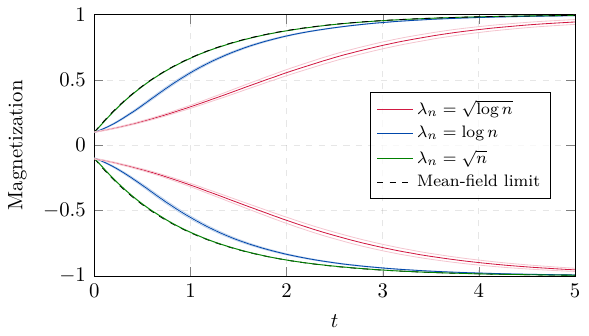}
\caption{The dashed curves show the mean-field limit, each of the dark solid curves was obtained by averaging the results of $100$ simulations and the light solid curves show $95\%$ confidence intervals around the dark solid curves; the positive curves correspond to $\bz_n^1$ and the negative curves to $\bz_n^2$. The values of $\lambda_n$ in the legend are ordered from inner curves to outer curves. In all the cases both communities have size $5000$, $a = 5$, $b = 1$, $\eta = 0.1$, $\alpha = 0$ and $\beta_n = \infty$.}
\label{fig: mean-field limit}
\end{figure}

\begin{theorem}
\label{the: mean-field limit}
Suppose that $\bsigma_n$ evolves as in Algorithm \ref{alg: algorithm} and assume that:
\begin{equation}
\label{eq: zero condition for mean-field limit}
v^1 \geq v^2 \quad \text{and} \quad \frac{bv^1 - av^2}{v^1 - v^2} < \alpha < \frac{av^1 - bv^2}{v^1 - v^2} \quad \text{if} \quad v^1 > v^2;
\end{equation}
$\alpha$ can take any real value if $v^1 = v^2$. Fix $\eta \in (0, 1)$ and a sequence $\beta_n \in (0, \infty]$ such that
\begin{equation}
\label{eq: first condition for mean-field limit}
\lim_{n \to \infty} \beta_n \lambda_n = \infty.
\end{equation}
Then $\bz_n \Rightarrow \bz_\infty$ in $D_{\R^2}[0, \infty)$ as $n \to \infty$, where $\bz_\infty$ is the unique function such that
\begin{equation*}
\bz_\infty^k(0) =
\begin{cases}
	\eta & \text{if} \quad k = 1, \\
	-\eta & \text{if} \quad k = 2,
\end{cases}
\quad \text{and} \quad
\dot{\bz}_\infty^k(t) =
\begin{cases}
	1 - \bz_\infty^k(t) & \text{if} \quad k = 1, \\
	-1 - \bz_\infty^k(t) & \text{if} \quad k = 2,
\end{cases}
\quad \text{for all} \quad t \geq 0.
\end{equation*}
Moreover, if the sequence $\gamma_n > 0$ is such that
\begin{equation}
\label{eq: second condition for mean-field limit}
\lim_{n \to \infty} \frac{\gamma_n}{\sqrt{n}} = 0, \quad \lim_{n \to \infty} \frac{\gamma_n}{\lambda_n} = 0 \quad \text{and} \quad
\lim_{n \to \infty} \gamma_n\e^{-\beta_n \lambda_n} = 0,
\end{equation}
then $\gamma_n \left(\bz_n - \bz_\infty\right) \Rightarrow 0$ in $D_{\R^2}[0, \infty)$ as $n \to \infty$.
\end{theorem}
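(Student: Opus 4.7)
The plan is to analyze $\bz_n$ via its semimartingale decomposition
\begin{equation*}
\bz_n^k(t) = \bz_n^k(0) + \int_0^t F_n^k\bigl(\bsigma_n(s)\bigr)\, ds + \bM_n^k(t), \qquad F_n^k(\sigma) \defeq -\frac{2}{V_n^k}\sum_{u \in \calV_n^k} \sigma(u)\, r\bigl(\beta_n \Delta_n(\sigma, u)\bigr),
\end{equation*}
where $\bM_n^k$ is the compensated jump martingale of $\bz_n^k$. I aim to show that $F_n^k(\bsigma_n(s))$ concentrates around the ODE drift $1 - \bz_n^1(s)$ for $k = 1$ and $-1 - \bz_n^2(s)$ for $k = 2$, while $\bM_n^k$ vanishes uniformly on compact time intervals. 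Combined with the Lipschitz property of the limit ODE, a Gronwall argument then delivers the weak limit, and tightness in $D_{\R^2}[0, \infty)$ follows routinely since the jumps of $\bz_n$ have size $2/V_n^k = O(1/n)$ and the drift is bounded by one in absolute value.

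\textbf{Approximating the drift.} The key calculation is to replace $\Delta_n(\sigma, u) = 2\sigma(u)[\sum_{v \sim u}\sigma(v) - \alpha_n \sum_{v \neq u}\sigma(v)]$ by a deterministic function of the magnetization vector. Standard Bernstein-type concentration for the Bernoulli edge variables, uniform in $u \in \calV_n$, should give (this is where Section \ref{sec: approximation errors} comes in) that for $u \in \calV_n^k$,
\begin{equation*}
\sum_{v \sim u} \sigma(v) = \lambda_n \bigl(a v^k z^k(\sigma) + b v^{k'} z^{k'}(\sigma)\bigr) + o(\lambda_n), \qquad \alpha_n \sum_{v \neq u}\sigma(v) = \alpha \lambda_n \bigl(v^1 z^1(\sigma) + v^2 z^2(\sigma)\bigr) + o(\lambda_n),
\end{equation*}
with probability going to one, where $k' \neq k$. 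Substituting, $\Delta_n(\sigma, u) \approx 2 \sigma(u) \lambda_n B^k(z(\sigma))$ with $B^k(z) \defeq (a - \alpha) v^k z^k + (b - \alpha) v^{k'} z^{k'}$. Condition \eqref{eq: zero condition for mean-field limit} combined with the initial condition $\bz_\infty(0) = (\eta, -\eta)$ gives $B^1(\bz_\infty(0)) > 0$ and $B^2(\bz_\infty(0)) < 0$, and these signs persist along the ODE since $\bz_\infty^1$ increases to $1$ and $\bz_\infty^2$ decreases to $-1$. A continuity/bootstrap argument transfers the same signs to $\bz_n(s)$ for large $n$. Under \eqref{eq: first condition for mean-field limit} together with the convention \eqref{eq: convention for infinite inverse temperature}, one then has $r(\beta_n \Delta_n(\sigma, u)) \to \indc\{\sigma(u) B^k(z(\sigma)) < 0\}$, so summing $-2 \sigma(u)$ against this indicator and normalizing yields
\begin{equation*}
F_n^1(\sigma) \to \frac{2 Y_-^1(\sigma)}{V_n^1} = 1 - z^1(\sigma), \qquad F_n^2(\sigma) \to -\frac{2 Y_+^2(\sigma)}{V_n^2} = -1 - z^2(\sigma),
\end{equation*}
which is exactly the ODE drift. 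For the martingale, the predictable quadratic variation satisfies $\avg{\bM_n^k}(t) \leq 4t / V_n^k = O(t/n)$, so Doob's inequality gives $\sup_{s \leq t}|\bM_n^k(s)| \inprob 0$ and the Gronwall step closes the weak convergence.

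\textbf{Quantitative statement and the main obstacle.} For the rate claim, I would carry constants explicitly through the three error sources: (i) the martingale fluctuation is of order $n^{-1/2}$, forcing $\gamma_n / \sqrt{n} \to 0$; (ii) the concentration of the neighbor sums around their means is of order $\sqrt{\lambda_n}$, producing a relative error of order $\lambda_n^{-1}$ in $F_n^k$ and demanding $\gamma_n / \lambda_n \to 0$; (iii) the deviation of $r(\beta_n x)$ from its step-function limit, evaluated at $|x|$ of order $\lambda_n$, is of order $\e^{-\beta_n \lambda_n}$, matching the third condition in \eqref{eq: second condition for mean-field limit}. The principal difficulty is handling the discontinuity of the limiting flip rate: one must maintain a strictly positive separation $|B^k(\bz_n(s))| \geq c > 0$ along the whole trajectory, so that every spin---not just a typical one---lies on the correct side of the threshold and the step-function approximation is uniform in $u$. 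This is why the strict inequalities in \eqref{eq: zero condition for mean-field limit} are essential and why the concentration bounds for the neighbor sums must be shown with high-probability uniformity over $u$ and (after a grid-plus-Lipschitz argument) over $s$ in a compact interval. Once this separation is secured, the three error estimates combine to give $\gamma_n \|\bz_n - \bz_\infty\|_\infty \inprob 0$ on compacts by Gronwall, which is the stated quantitative convergence.
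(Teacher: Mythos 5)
Your decomposition, drift approximation, Gr\"onwall step, and tightness remark all match the paper's route (Sections \ref{sec: mean-field approximation}--\ref{sec: mean-field limit}): the semimartingale form \eqref{eq: stochastic equation version 3}, the identification of the mean-field drift $\tanh(\beta_n\lambda_n\calL^k(\bz_n)) - \bz_n^k$ with the step-function limit, the Doob/martingale estimate for $\calM_n^k$, and a Gr\"onwall closing argument after passing to an a.s.-convergent subsequence. Your algebra for $F_n^k(\sigma) \to 1 - z^1(\sigma)$, $-1 - z^2(\sigma)$ is exactly equation \eqref{eq: stochastic equation version 2} in the paper.

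There is, however, one genuine gap: the handling of the dependence between the configuration and the graph. At any $s > 0$, $\bsigma_n(s)$ is correlated with $\calG_n$ (the flip rates depend on the edges), so you cannot apply Bernstein/Chernoff to $\sum_{v \sim u}\sigma(v)$ conditionally on $\sigma = \bsigma_n(s)$ as if the edges were independent Bernoulli variables given $\sigma$. Your proposed remedy --- uniformity over $u$ plus a grid-plus-Lipschitz argument in $s$ --- addresses time uniformity but not this dependence: at each grid time the configuration is still random and correlated with the graph, and Lipschitz control of $\bz_n$ says nothing about which configuration is being visited. The paper's way around this is to prove concentration \emph{uniformly over all configurations} $\sigma \in \Sigma_n(\zeta, \xi)$ (Proposition \ref{prop: mean-field approximation}, Lemma \ref{lem: concentration of signed degrees}), using a union bound over the $2^{V_n}$ configurations; that in turn forces the use of an exponential-in-$n$ tail bound on the \emph{count} of bad nodes rather than a per-node probability bound, and is precisely why the sets $\calY^l_t(\sigma)$ with $(l,t) \neq (k,s)$ are isolated so that the edge indicators entering $\hat Y^l_t(\sigma, u)$ are genuinely independent over $u \in \calY^k_s(\sigma)$ conditional on $\sigma$. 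Without this device your concentration step does not close. A secondary, minor point: your heuristic that the neighbor-sum fluctuation "produces a relative error of order $\lambda_n^{-1}$ in $F_n^k$" does not reflect the actual source of the requirement $\gamma_n / \lambda_n \to 0$, which in the paper arises from needing an intermediate scale $\delta_n$ with $\gamma_n \ll \delta_n \ll \lambda_n$ so that the deviation window $\sqrt{\lambda_n\delta_n}$ is $o(\lambda_n)$ while the bad-node fraction $\e^{-c\delta_n}$ beats $\gamma_n^{-1}$; the error in $\calE^k$ is exponentially small in $\delta_n$, not polynomial in $\lambda_n$.
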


As observed in Section \ref{sec: introduction}, the situation where $\alpha = 0$ and $\beta_n = \infty$ for all $n$ corresponds to the Label Propagation or Majority Vote algorithm, which had not been rigorously analyzed before. The plot shown in Figure \ref{fig: mean-field limit} corresponds to the latter choice of hyperparameters and illustrates how the convergence rate to the mean-field limit depends on $\lambda_n$, as is also captured by \eqref{eq: second condition for mean-field limit}. In particular, for graphs of the same finite size, the trajectories of $\bz_n$ get closer to the mean-field limit when the graph is denser.


\begin{remark}
\label{rem: selection of alpha}
If the graph is fixed, then a nonasymptotic version of \eqref{eq: zero condition for mean-field limit} is given by
\begin{equation*}
\frac{b_n V_n^1 - a_n V_n^2}{V_n^1 - V_n^2} < \alpha_n < \frac{a_n V_n^1 - b_n V_n^2}{V_n^1 - V_n^2} \quad \text{if} \quad V_n^1 > V_n^2.
\end{equation*}
While this condition depends on the possibly unknown community sizes and edge densities, these quantities can be easily estimated by considering the fraction $\eta$ of revealed labels. The community sizes can be estimated by counting the number of revealed labels in each community, while the edge densities can be estimated by counting the numbers of edges between nodes with revealed labels in the same community or in different communities.
\end{remark}

\begin{remark}
\label{rem: oracle mistakes}
Suppose that, similarly to the setting of \cite{avrachenkov2020almost}, the oracle makes mistakes for nodes in $\calV_n^k$ with probability $q^k$. Then the average magnetization when the algorithm starts is given by $\eta - 2\eta q^1$ for $\calV_n^1$ and $-\eta + 2\eta q^2$ for $\calV_n^2$. Theorems \ref{the: mean-field limit} and \ref{the: almost exact recovery}, stated below, can be extended to the situation where the oracle makes mistakes, provided that:
\begin{align*}
&\left(a - \alpha\right)v^1\left(\eta - 2\eta q^1\right) + \left(b - \alpha\right)v^2\left(-\eta + 2\eta q^2\right) > 0, \\
&\left(b - \alpha\right)v^1\left(\eta - 2\eta q^1\right) + \left(a - \alpha\right)v^2\left(-\eta + 2\eta q^2\right) < 0;
\end{align*}
observe that if $q^1 = 0 = q^2$, then we recover condition \eqref{eq: zero condition for mean-field limit}.
The proofs follow from similar arguments as in the case without mistakes, by noting that the above conditions imply that the point $\left(\eta - 2\eta q^1, -\eta + 2\eta q^2\right)$ is in the interior of the gray set depicted in Figure \ref{fig: attractivity region}.
\end{remark}

It is straightforward to check that the mean-field limit $\bz_\infty$ is such that
\begin{equation}
\label{eq: explicit expression for mean-field limit}
\bz_\infty^1(t) = 1 + \left(\eta - 1\right)\e^{-t} = - \bz_\infty^2(t) \quad \text{for all} \quad t \geq 0.
\end{equation}
As a result, $\bz_\infty^1(t) \to 1$ and $\bz_\infty^2(t) \to -1$ as $t \to \infty$, which corresponds to the spin being constant within each community and opposite between communities. If $\gamma_n$ satisfies \eqref{eq: second condition for mean-field limit} and $t$ is a fixed time, then Theorem \ref{the: mean-field limit} and \eqref{eq: explicit expression for mean-field limit} imply that
\begin{equation}
\label{eq: relative error at time t}
\max\left\{\left|\bz_n^1(t) - 1\right|, \left|\bz_n^2(t) + 1\right|\right\} = (1 - \eta) \e^{-t} + o\left(\frac{1}{\gamma_n}\right).
\end{equation}

In practice, this expression can be used to select the simulation time $t_{\mathrm{end}}$ required as input in Algorithm \ref{alg: algorithm}, e.g., in a way that keeps the relative classification error below a given threshold. If $t_{\mathrm{end}} > \log [2(1 - \eta) / \varepsilon]$, then the first term on the right-hand side is smaller than $\varepsilon / 2$ and thus the relative error is less than $\varepsilon$ if the graph is large enough. 

The expression \eqref{eq: relative error at time t} also suggests that we may achieve almost exact recovery, i.e., a vanishing fraction of misclassified nodes, if the simulation time $t_{\mathrm{end}} = t_n$ approaches infinity as $n \to \infty$. Observe that this does not follow directly from the limits stated in Theorem \ref{the: mean-field limit} since these limits hold with respect to the topology of uniform convergence over compact sets. Nonetheless, we prove in Section \ref{sec: almost exact recovery} that almost exact recovery is indeed achieved if $t_n \to \infty$ as $n \to \infty$ in the way stated in the following theorem.

\begin{theorem}
\label{the: almost exact recovery}
Assume that \eqref{eq: zero condition for mean-field limit} holds. Fix $\eta \in (0, 1)$ and a sequence $\beta_n \in (0, \infty]$ such that
\begin{equation}
\label{eq: condition for almost exact recovery}
\lim_{n \to \infty} \frac{\beta_n \lambda_n}{\log \lambda_n} = \infty.
\end{equation}
If $t_n \defeq c \log \lambda_n$ with $0 < c < 1 / 4$, and the above conditions hold, then
\begin{equation}
\label{eq: almost exact recovery}
\max\left\{\left|\bz_n^1(t_n) - 1\right|, \left|\bz_n^2(t_n) + 1\right|\right\} \Rightarrow 0 \quad \text{as} \quad n \to \infty.
\end{equation}
Moreover, the limit holds in expectation as well.
\end{theorem}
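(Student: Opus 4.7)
The plan is to reduce the claim to showing $|\bz_n^k(t_n) - \bz_\infty^k(t_n)| \Rightarrow 0$, since \eqref{eq: explicit expression for mean-field limit} already yields $|\bz_\infty^k(t_n) - (\pm 1)| = (1-\eta)\lambda_n^{-c} \to 0$. The main difficulty is that Theorem \ref{the: mean-field limit} only provides convergence uniformly over compact sets of times, whereas here $t_n = c\log\lambda_n \to \infty$; its conclusion cannot be used as a black box, and one must extract quantitative information from its proof and propagate it over the diverging horizon.

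I would therefore revisit the semimartingale decomposition underlying the proof of Theorem \ref{the: mean-field limit},
\[
\bz_n^k(t) = \bz_n^k(0) + \int_0^t \bg_n^k(\bsigma_n(s))\,ds + \bM_n^k(t),
\]
where $\bg_n^k$ is the conditional drift of $\bz_n^k$ under the Markov chain and $\bM_n^k$ is the compensated-jump martingale. The approximation results of Section \ref{sec: approximation errors} should, on a high-probability event controlling degree concentration and typical edge counts, give a uniform bound on $\bg_n^k(\bsigma_n(s)) - [(\pm 1) - \bz_n^k(s)]$ valid for all $s \in [0, t_n]$, with residual terms of order $1/\sqrt{\lambda_n}$, $1/\sqrt{n}$ and $\e^{-\beta_n \lambda_n}$, all vanishing under \eqref{eq: condition for almost exact recovery}. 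For the noise, jumps of $\bz_n^k$ have size $O(1/V_n^k)$ and the total flip rate is $O(V_n^k)$, so the predictable quadratic variation over $[0, t_n]$ is $O(t_n / V_n^k) = O(\log\lambda_n / n)$, and Doob's maximal inequality gives $\sup_{t \leq t_n}|\bM_n^k(t)| = O_P(\sqrt{\log\lambda_n / n}) \to 0$. Subtracting the mean-field ODE from the semimartingale decomposition, the discrepancy $\bE_n^k(t) \defeq \bz_n^k(t) - \bz_\infty^k(t)$ satisfies an integral equation whose homogeneous part is the contractive linear ODE $\dot y = -y$. A Gronwall-type bound then yields $|\bE_n^k(t_n)| \leq \e^{-t_n}|\bE_n^k(0)| + \int_0^{t_n} \e^{-(t_n-s)}|\text{error}(s)|\,ds + \sup_{t \leq t_n}|\bM_n^k(t)|$; the contractivity is essential, because it damps errors rather than amplifying them, so the integral is governed by the supremum of the uniform errors instead of accumulating with $t_n$. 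Combined with $|\bz_\infty^k(t_n) - (\pm 1)| = (1-\eta)\lambda_n^{-c}$ and a triangle inequality, this delivers \eqref{eq: almost exact recovery}. The threshold $c < 1/4$ should emerge as the margin ensuring that $\lambda_n^{-c}$ dominates the uniform fluctuation terms as $n \to \infty$.

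The convergence in expectation follows at once from dominated convergence, since $|\bz_n^k(t)| \leq 1$ almost surely. The principal obstacle is the extension step: one cannot invoke Theorem \ref{the: mean-field limit} directly on the diverging interval $[0, t_n]$, so the quantitative drift approximations of Section \ref{sec: approximation errors} must be combined with sharp martingale bounds, and the contractivity of the mean-field ODE is the ingredient that makes the Gronwall argument feasible over a time horizon that grows with $n$.
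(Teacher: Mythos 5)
Your overall framework is the right one — start from the semimartingale decomposition \eqref{eq: stochastic equation version 3}, control the mean-field approximation error $\calE^k$ and the $\tanh$ approximation via Section \ref{sec: approximation errors}, bound the compensated-jump martingale via Doob, and close with a Gr\"onwall-type inequality, then get convergence in expectation by boundedness. But there are two substantive issues.

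First, you assert that the drift error bound "is valid for all $s \in [0,t_n]$" on a high-probability event of degree concentration. This is precisely the gap that the paper's proof has to work hard to fill. Proposition \ref{prop: mean-field approximation} only controls $\calE^k(\sigma)$ for $\sigma \in \Sigma_n(\zeta, \xi_n)$, and whether $\bsigma_n(s)$ stays in this set over the diverging horizon is not given a priori — it is the conclusion we are trying to reach. The paper's proof resolves this circularity by introducing the stopping time $\tau_n = \min\{t_n, \inf\{t : \bz_n(t) \notin \calA(\zeta, \xi_n)\}\}$ with $\xi_n = \lambda_n^{-d}$ for a carefully chosen $d \in (c, 1-2c)$, establishing the Gr\"onwall bound only up to $\tau_n$, and then showing via the key implication around \eqref{eq: key step for almost exact recovery} that the resulting estimate forces $\tau_n = t_n$ with high probability. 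Without this bootstrap, the argument does not close. Your proposal does not mention it, so as written it is incomplete.

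Second, your emphasis on the contractivity of the linear ODE $\dot y = -y$ as "the ingredient that makes the Gr\"onwall argument feasible" does not match the paper. The paper does \emph{not} exploit contractivity. After triangle inequalities, the paper's integral inequality for $\bd_n^k$ has the term $+\int_0^t \bd_n^k\,d\zeta$ with a positive sign, and Gr\"onwall therefore produces an \emph{amplifying} factor $\e^{\tau_n} \leq \lambda_n^c$ rather than a damping one. This is compensated by proving the error terms of Propositions \ref{prop: limit of poisson term} and \ref{prop: mean-field approximation} vanish when multiplied by $\gamma_n = \lambda_n^{2c}$, which is where the restriction $c < 1/4$ genuinely comes from: Proposition \ref{prop: limit of poisson term} needs $\gamma_n^2 t_n / n = \lambda_n^{4c}\log\lambda_n / n \to 0$, and $4c < 1$ with slowly diverging $\lambda_n$ makes this hold. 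Your variation-of-parameters bound $|\bE_n^k(t_n)| \leq \e^{-t_n}|\bE_n^k(0)| + \int_0^{t_n}\e^{-(t_n-s)}|\mathrm{error}(s)|\,ds + \ldots$ is a genuinely different and potentially sharper route (it would only need the error terms to be $o(\lambda_n^{-c})$ rather than $o(\lambda_n^{-2c})$, and could plausibly widen the admissible range of $c$), but it is not the paper's argument, and it would still need the stopping-time bootstrap to justify the domain of validity of the error estimates.
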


The Glauber dynamics used in Algorithm \ref{alg: algorithm} flip each spin at most at unit rate, so the aggregate flip rate is $O(n)$. It follows that the average number of flips when the dynamics are simulated for $t_n$ units of time is $O(nt_n) = O(n \log \lambda_n)$, which is quasi-linear in the size of the graph. Theorem \ref{the: almost exact recovery} says that this is enough for almost exact recovery.

\begin{remark}
\label{rem: algorithm running time}
We require that $t_n < (\log \lambda_n) / 4$ for technical reasons, but believe that almost exact recovery is also achieved if $t_n$ grows faster. However, almost exact recovery may not hold if $t_n$ grows too fast. For example, if the clusters have the same size and $\alpha = 0$, then the stationary distribution of $\bsigma_n$ given by \eqref{eq: gibbs probability measure} assigns the highest probability to the two configurations with the same spin sign for all nodes. In this case, almost exact recovery does not hold if $t_n$ is larger than the mixing time of the Glauber dynamics, which is typically large in the presence of metastable states.
\end{remark}

\section{Mean-field approximation}
\label{sec: mean-field approximation}

In this section we obtain stochastic integral equations for describing the dynamics of the vector-valued magnetization process $\bz_n$. The equations involve mean-field approximations and the associated error terms, and will be used in Section \ref{sec: mean-field limit} to prove Theorem \ref{the: mean-field limit}. As a first step for deriving these equations, we observe that, using standard arguments, we may construct the Markov chain $\bsigma_n$ such that:
\begin{align*}
\bz_n^k(t) - \bz_n^k(0) &= \frac{2}{V_n^k}\calN_+^k\left(\int_0^t \sum_{u \in \calY_-^k\left(\bsigma_n(\tau)\right)} r\left(\beta_n \Delta_n\left(\bsigma_n(\tau), u\right)\right)d\tau\right) \\
&- \frac{2}{V_n^k}\calN_-^k\left(\int_0^t \sum_{u \in \calY_+^k\left(\bsigma_n(\tau)\right)} r\left(\beta_n \Delta_n\left(\bsigma_n(\tau), u\right)\right)d\tau\right) \quad \text{for all} \quad t \geq 0,
\end{align*} 
where the processes $\calN_s^k$ are independent Poisson processes with unit rates. The sum in the first line is the rate at which a negative spin in $\calV_n^k$ flips, and the sum in the second line is the rate at which a positive spin flips. The former situation increases $\bz_n^k$ by $2 / V_n^k$, whereas the latter situation decreases $\bz_n^k$ by the same amount. Let
\begin{align*}
\calM_n^k(t) &= \calN_+^k\left(\int_0^t \sum_{u \in \calY_-^k\left(\bsigma_n(\tau)\right)} r\left(\beta_n \Delta_n\left(\bsigma_n(\tau), u\right)\right)d\tau\right) \\
&- \calN_-^k\left(\int_0^t \sum_{u \in \calY_+^k\left(\bsigma_n(\tau)\right)} r\left(\beta_n \Delta_n\left(\bsigma_n(\tau), u\right)\right)d\tau\right) \\
&+ \int_0^t \sum_{u \in \calV_n^k} \bsigma_n(\tau, u) r\left(\beta_n \Delta_n\left(\bsigma_n(\tau), u\right)\right)d\tau,
\end{align*}
which is the sum of two centered Poisson processes. Then we may write
\begin{equation}
\label{eq: stochastic equation version 1}
\bz_n^k(t) = \bz_n^k(0) + \frac{2}{V_n^k}\calM_n^k(t) - \frac{2}{V_n^k}\int_0^t \sum_{u \in \calV_n^k} \bsigma_n(\tau, u) r\left(\beta_n \Delta_n\left(\bsigma_n(\tau), u\right)\right)d\tau.
\end{equation}

The energy variation in $\sigma$ when the spin of $u \in \calV_n^k$ flips can be expressed as follows:
\begin{equation}
\label{eq: alternative expression for energy variation}
\begin{split}
\Delta_n(\sigma, u) &= 2 \sigma(u) \left[\sum_{v \sim u} \sigma(v) - \alpha_n \sum_{l = 1, 2} \sum_{v \in \calV_n^l} \sigma(v)\right] + 2\alpha_n\\
&= 2\sigma(u) \sum_{l = 1, 2} \left[\sum_{v \in \calN_n^l(u)} \sigma(v) - \alpha_n V_n^l z^l(\sigma)\right] + 2\alpha_n.
\end{split}
\end{equation}
If we let $\bark \defeq 3 - k$, then it makes sense to approximate
\begin{equation*}
\sum_{v \in \calN_n^k(u)} \bsigma_n(t, v) \quad \text{by} \quad a_n V_n^k \bz_n^k(t) \quad \text{and} \quad \sum_{v \in \calN_n^{\bark}(u)} \bsigma_n(t, v) \quad \text{by} \quad b_n V_n^{\bark} \bz_n^{\bark}(t).
\end{equation*}
The right-hand sides would be the expected values of the left-hand sides if $\bsigma_n(t)$ and $\calG_n$ were independent, but they are not independent for $t > 0$ because the Glauber dynamics depend on the graph structure. Consider the linear function $\map{\calL^k}{\R^2}{\R}$ such that
\begin{equation*}
\calL^k(z) \defeq \left(a - \alpha\right) v^k z^k + \left(b - \alpha\right) v^\bark z^\bark \quad \text{for all} \quad z \in \R^2.
\end{equation*}
If we drop the last term of \eqref{eq: alternative expression for energy variation} and we further approximate the coefficients $a_n V_n^k$, $b_n V_n^k$ and $\alpha_n V_n^k$ by $av^k \lambda_n$, $bv^k \lambda_n$ and $\alpha v^k \lambda_n$, respectively, then we may approximate
\begin{equation*}
\Delta_n(\bsigma_n(t), u) \quad \text{by} \quad 2\lambda_n\bsigma_n(t, u)\calL^k\left(\bz_n(t)\right) \quad \text{for} \quad u \in \calV_n^k.
\end{equation*}

It is not difficult to check that
\begin{equation*}
\frac{Y_+^k\left(\bsigma_n\right)}{V_n^k} = \frac{1 + \bz_n^k}{2} \quad \text{and} \quad \frac{Y_-^k\left(\bsigma_n\right)}{V_n^k} = \frac{1 - \bz_n^k}{2}.
\end{equation*}
Indeed, the sum of the quantities on the left equals one and their difference is $\bz_n^k$. Further, $r(x) + r(-x) = 1$ and $r(2x) - r(-2x) = -\tanh(x)$ for all $x \in \R$. Thus,
\begin{align*}
\frac{2}{V_n^k}\sum_{u \in \calV_n^k} \bsigma_n(u) r\left(\beta_n \Delta_n\left(\bsigma_n, u\right)\right) &= \frac{2}{V_n^k}\sum_{u \in \calY_+^k\left(\bsigma_n\right)} \left[r\left(\beta_n \Delta_n\left(\bsigma_n, u\right)\right) - r\left(2\beta_n\lambda_n\calL^k\left(\bz_n\right)\right)\right] \\
&- \frac{2}{V_n^k}\sum_{u \in \calY_-^k\left(\bsigma_n\right)}  \left[r\left(\beta_n \Delta_n\left(\bsigma_n, u\right)\right) - r\left(-2\beta_n\lambda_n\calL^k\left(\bz_n\right)\right)\right] \\
&- \tanh\left(\beta_n\lambda_n\calL^k\left(\bz_n\right)\right) + \bz_n^k.
\end{align*}

We conclude from the latter equation and \eqref{eq: stochastic equation version 1} that
\begin{equation}
\label{eq: stochastic equation version 2}
\begin{split}
\bz_n^k(t) &= \bz_n^k(0) + \int_0^t \left[\tanh\left(\beta_n\lambda_n\calL^k\left(\bz_n(\tau)\right)\right) - \bz_n^k(\tau)\right]d\tau + \frac{2}{V_n^k}\calM_n^k(t) \\
&- \frac{2}{V_n^k}\int_0^t \sum_{u \in \calV_n^k} \left[r\left(\beta_n \Delta_n\left(\bsigma_n(\tau), u\right)\right) - r\left(2\beta_n\lambda_n\bsigma_n(\tau, u)\calL^k\left(\bz_n(\tau)\right)\right)\right]d\tau.
\end{split}
\end{equation}
The first two terms are what we call the mean-field approximation. The last two terms are the errors associated with this approximation and will be shown to vanish as $n \to \infty$. It follows from \eqref{eq: convention for infinite inverse temperature} that the above equation holds with
\begin{equation*}
\tanh\left(\beta_nx\right) = r(-2\beta_n x) - r(2\beta_n x) = \sign(x) \quad \text{for all} \quad x \in \R \quad \text{if} \quad \beta_n = \infty.
\end{equation*}

For each configuration $\sigma \in \Sigma_n$, define
\begin{equation*}
\calE^k(\sigma) \defeq \frac{2}{V_n^k}\sum_{u \in \calV_n^k} \left[r\left(\beta_n \Delta_n\left(\sigma, u\right)\right) - r\left(2\beta_n\lambda_n\sigma(u)\calL^k\left(z(\sigma)\right)\right)\right].
\end{equation*}
Using this notation, we may rewrite \eqref{eq: stochastic equation version 2} as follows:
\begin{equation}
\label{eq: stochastic equation version 3}
\bz_n^k(t) = \bz_n^k(0) + \int_0^t \left[\tanh\left(\beta_n\lambda_n\calL^k\left(\bz_n(\tau)\right)\right) - \bz_n^k(\tau) - \calE^k\left(\bsigma_n(\tau)\right)\right]d\tau + \frac{2}{V_n^k}\calM_n^k(t).
\end{equation}

\section{Proof of Theorem \ref{the: mean-field limit}}
\label{sec: mean-field limit}

In this section we prove Theorem \ref{the: mean-field limit}. The first step is to prove a tightness result which implies that every convergent subsequence of $\set{\bz_n}{n \geq 1}$ has a further subsequence that converges weakly in $D_{\R_2}[0, \infty)$. By standard arguments, it then suffices to fix an arbitrary convergent subsequence $\set{\bz_m}{m \in \calM}$ and prove that $\bz_m \Rightarrow \bz_\infty$ and $\gamma_m(\bz_m - \bz_\infty) \Rightarrow 0$ in $D_{\R^2}[0, \infty)$ as $m \to \infty$ under the corresponding assumptions in Theorem \ref{the: mean-field limit}. Furthermore, it is enough to fix an arbitrary $T \geq 0$ and prove the limits for the uniform metric over the finite interval $[0, T]$. A key step for this, which is deferred until Section \ref{sec: approximation errors}, is to show that the last two terms of \eqref{eq: stochastic equation version 2} converge weakly to zero. In particular, Proposition~\ref{prop: mean-field approximation} is where we overcome the main obstacle created by the mutual dependence between the current spin configuration and the structure of the random graph. Finally, we invoke Skorohod's representation theorem to obtain versions of the processes such that the convergence holds almost surely, and prove that almost all trajectories satisfy $\bz_m \to \bz_\infty$ and $\gamma_m(\bz_m - \bz_\infty) \to 0$ uniformly over $[0, T]$ as $m \to \infty$ under the corresponding assumptions.

The tightness result is stated below and proved in Appendix \ref{app: tightness}, using the fact that the processes $\bz_n$ have jumps of size $O\left(1 / n\right)$ at rate $O(n)$.

\begin{lemma}
\label{lem: tightness of magnetization processes}
The sequence of processes $\set{\bz_n}{n \geq 1}$ is tight in $D_{\R^2}[0, \infty)$ with respect to the topology of uniform convergence over compact sets, and every subsequence of $\set{\bz_n}{n \geq 1}$ has a further subsequence that converges weakly to an almost surely continuous process.
\end{lemma}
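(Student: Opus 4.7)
The plan is to verify Aldous's tightness criterion for c\`adl\`ag processes in a compact state space, and then to upgrade the topology from Skorohod to uniform convergence on compact sets by exploiting the vanishing jump sizes.

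First I would record two basic observations. Since $\bz_n(t) \in [-1, 1]^2$ for every $t$, marginal tightness is immediate. The flip rate of any single spin is at most $1$, because $r(x) = 1/(1 + \e^x) \leq 1$ and convention \eqref{eq: convention for infinite inverse temperature} preserves this bound at zero temperature. Consequently, the total rate at which spins in $\calV_n^k$ flip is bounded by $V_n^k$, and each such flip changes $\bz_n^k$ by exactly $\pm 2/V_n^k$. A coupling based on i.i.d.\ unit-rate Poisson clocks placed at the nodes then shows that the number $J_n^k(s, t)$ of flips in $\calV_n^k$ during $(s, t]$ is stochastically dominated by a Poisson random variable with mean $V_n^k(t - s)$, uniformly over the past history.

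Next I would apply Aldous's criterion. Fix any family of $\{\calF^n_t\}$-stopping times $\tau_n \leq T$ and any deterministic sequence $\delta_n \to 0$. Using the strong Markov property and the Poisson domination above,
\begin{equation*}
\E\!\left[\left|\bz_n^k(\tau_n + \delta_n) - \bz_n^k(\tau_n)\right| \,\Big|\, \calF^n_{\tau_n}\right] \leq \frac{2}{V_n^k}\, \E\!\left[J_n^k(\tau_n, \tau_n + \delta_n) \,\Big|\, \calF^n_{\tau_n}\right] \leq 2\delta_n,
\end{equation*}
which tends to zero and, together with marginal tightness, yields tightness of $\{\bz_n\}$ in $D_{\R^2}[0, \infty)$ endowed with the Skorohod $J_1$ topology.

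Finally, the maximum jump size of $\bz_n$ is bounded by $2/\min\{V_n^1, V_n^2\} = O(1/n) \to 0$ deterministically, so by a standard property of the Skorohod space every subsequential weak limit has almost surely continuous sample paths, and the convergence actually takes place in the topology of uniform convergence on compact sets. The main (rather mild) obstacle is the Poisson domination itself: the flip rates depend on both the current configuration and the random graph, so one must construct $\bsigma_n$ carefully on a probability space carrying independent unit-rate Poisson clocks at the nodes, but the per-spin bound $r \leq 1$ makes the thinning argument routine.
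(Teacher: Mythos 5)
Your proof is correct, but it takes a genuinely different route from the paper's. The paper verifies tightness by bounding the expected modulus of continuity $\E[w_T(\bz_n,h)]$ directly: it dominates the number of jumps of $\bz_n$ in any window by a Poisson process $\calN_n$ of rate $V_n$, then controls $\sup_{t\le T}|\calN_n(t)-V_n t|$ via Doob's $L^2$ submartingale inequality and Jensen, and finally invokes a Kallenberg-style criterion (\cite[Theorems 23.8, 23.9]{kallenberg1997foundations}) which simultaneously gives tightness in the local-uniform topology and a.s.\ continuity of all subsequential limits. You instead use Aldous's criterion on the conditional first moment $\E[|\bz_n^k(\tau_n+\delta_n)-\bz_n^k(\tau_n)|\mid\calF^n_{\tau_n}]\le 2\delta_n$, obtain $J_1$-tightness, and then separately upgrade to the local-uniform topology and continuity of limits via the vanishing $O(1/n)$ jump sizes. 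Both arguments hinge on the same two observations --- per-spin flip rate at most $1$ and jump sizes $2/V_n^k = O(1/n)$ --- and both need the Poisson-clock coupling. The trade-offs: your Aldous route is quicker to write down (no maximal inequality, just a first-moment bound), whereas the paper's modulus-of-continuity route packages the topology upgrade and continuity conclusion into a single cited theorem, avoiding the explicit two-step ``tightness in $J_1$, then C-tightness'' argument that you sketch at the end. Either is a complete and valid proof.
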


Consider any sequence $\calM \subset \N$ such that $\bz_m \Rightarrow \bz$ in $D_{\R^2}[0, \infty)$ as $m \to \infty$, where $\bz$ is almost surely continuous and $m$ takes values in $\calM$. In order to establish Theorem \ref{the: mean-field limit}, it suffices to show that the following two properties hold for any such sequence:
\begin{enumerate}
\item[(a)] $\bz = \bz_\infty$ with probability one if \eqref{eq: zero condition for mean-field limit} and \eqref{eq: first condition for mean-field limit} hold,

\item[(b)] $\gamma_m\left(\bz_m - \bz_\infty\right) \Rightarrow 0$ in $D_{\R^2}[0, \infty)$ as $m \to \infty$ if \eqref{eq: second condition for mean-field limit} holds as well.
\end{enumerate}
Property (a) can be proved by showing that for each $T \geq 0$, we have $\bz(t) = \bz_\infty(t)$ for all $t \in [0, T]$ almost surely. Similarly, let $D_{\R^2}[0, T]$ be the space of c\`adl\`ag functions from $[0, T]$ into $\R$ with the topology of uniform convergence. Then (b) holds if $\gamma_m \left(\bz_m - \bz_\infty\right) \Rightarrow 0$ in $D_{\R^2}[0, T]$ as $m \to \infty$ for all $T \geq 0$. Thus, we fix $T \geq 0$ and prove these properties.

For each $\xi \in (0, 1)$ and $\zeta > 0$, consider the following sets:
\begin{align*}
&\calA(\zeta, \xi) \defeq \set{z \in \left[-1 + \xi, 1 - \xi\right]^2}{\min\left\{\calL^1\left(z\right), -\calL^2\left(z\right)\right\} \geq \zeta}, \\
&\Sigma_n(\zeta, \xi) \defeq \set{\sigma \in \Sigma_n}{z(\sigma) \in \calA(\zeta, \xi)}.
\end{align*}
The former set is depicted in Figure \ref{fig: attractivity region}. The value of $\tanh(\beta_n\lambda_n \calL^k\left(\bz_n\right))$ in \eqref{eq: stochastic equation version 3} is mostly given by the sign of $\calL^k\left(\bz_n\right)$, particularly when $\beta_n \lambda_n$ is large, and this sign depends on the position of $\bz_n$ relative to the line $\calL^k(z) = 0$ plotted in Figure \ref{fig: attractivity region}. We have $\calL^1(\bz_n) > 0$ and $\calL^2(\bz_n) < 0$ when $\bz_n$ is in the interior of the set $\calA(0, 0)$, and this pushes $\bz_n$ in the direction of the point $(1, -1)$, which is such that the spins are positive in $\calV_n^1$ and negative in $\calV_n^2$.

\begin{figure}
\centering
\includegraphics{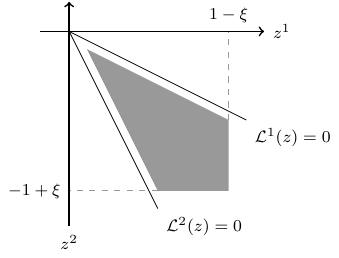}
\caption{The shaded region represents the set $\calA(\zeta, \xi)$ when condition \eqref{eq: zero condition for mean-field limit} holds and $\alpha > a$. The slopes of the lines $\calL^1(z) = 0$ and $\calL^2(z) = 0$ depend on the value of $\alpha$ as illustrated in Figure \ref{fig: sign plot} of Section \ref{sec: simulations}.}
\label{fig: attractivity region}
\end{figure}

It follows from \eqref{eq: explicit expression for mean-field limit} that
\begin{equation*}
0 < \eta \leq \bz_\infty^1(t) = -\bz_\infty^2(t) \leq 1 + (\eta - 1)\e^{-T} < 1 \quad \text{for all} \quad t \in [0, T].
\end{equation*}
We conclude from the above relations and condition \eqref{eq: zero condition for mean-field limit} that
\begin{align*}
&\calL^1\left(\bz_\infty(t)\right) = \left[(a - \alpha)v^1 - (b - \alpha)v^2\right]\bz_\infty^1(t) \geq \calL^1\left(\bz_\infty(0)\right) = \left[(a - \alpha)v^1 - (b - \alpha)v^2\right] \eta > 0, \\
&\calL^2\left(\bz_\infty(t)\right) = \left[(b - \alpha)v^1 - (a - \alpha)v^2\right]\bz_\infty^1(t) \leq \calL^2\left(\bz_\infty(0)\right) = \left[(b - \alpha)v^1 - (a - \alpha)v^2\right] \eta < 0,
\end{align*}
for all $t \geq 0$. In particular, we may write
\begin{equation}
\label{eq: lower bound l functions}
\min\left\{\calL^1\left(\bz_\infty(t)\right), -\calL^2\left(\bz_\infty(t)\right)\right\} \geq \min\left\{\calL^1\left(\bz_\infty(0)\right), -\calL^2\left(\bz_\infty(0)\right)\right\} > 0.
\end{equation}
Then there exists $\theta \in (0, 1)$ such that $\bz_\infty(t)$ is in the interior of $\calA(\theta, \theta)$ for all $t \in [0, T]$.

If \eqref{eq: zero condition for mean-field limit} and \eqref{eq: first condition for mean-field limit} hold and $\gamma_m = 1$ for all $m \in \calM$, or if \eqref{eq: zero condition for mean-field limit}, \eqref{eq: first condition for mean-field limit} and \eqref{eq: second condition for mean-field limit} hold, then:
\begin{subequations}
\label{eq: weak limits}
\begin{align}
&\bz_m \Rightarrow \bz \quad \text{in} \quad D_{\R^2}[0, T], \\
&\gamma_m\left(\frac{\calM_m^1}{V_m^1}, \frac{\calM_m^2}{V_m^2}\right) \Rightarrow 0 \quad \text{in} \quad D_{\R^2}[0, T], \\
&\gamma_m\left[\bz_m(0) - \bz_\infty(0)\right] \Rightarrow 0 \quad \text{in} \quad \R^2, \label{seq3: weak limits} \\
&\gamma_m\left(\max_{\sigma \in \Sigma_m\left(\theta, \theta\right)} \calE^1(\sigma), \max_{\sigma \in \Sigma_m\left(\theta, \theta\right)} \calE^2(\sigma)\right) \Rightarrow 0 \quad \text{in} \quad \R^2,
\end{align} 
\end{subequations}
as $m \to \infty$. The first limit holds because we are considering a convergent subsequence. The second limit is essentially a consequence of the central limit theorem for the Poisson process and is proved in Proposition~\ref{prop: limit of poisson term} of Section \ref{sec: approximation errors}. The third limit follows from the central limit theorem for independent and identically distributed random variables and is derived in Lemma \ref{lem: initial conditions} of Appendix \ref{app: other intermediate results}. The fourth limit is obtained in Proposition \ref{prop: mean-field approximation} of Section \ref{sec: approximation errors} using concentration inequalities for the binomial distribution.

\begin{lemma}
\label{lem: representation lemma}
The graphs $\calG_m$, the processes introduced in Section \ref{sec: mean-field approximation} and the process $\bz$ can be constructed on a common probability space $\left(\Omega, \calF, \prob\right)$ for all $m \in \calM$ such that the following limits hold as $m \to \infty$ with probability one:
\begin{subequations}
\label{eq: strong limits}
\begin{align}
&\bz_m \to \bz \quad \text{in} \quad D_{\R^2}[0, T], \label{seq1: strong limits} \\
&\gamma_m\left(\frac{\calM_m^1}{V_m^1}, \frac{\calM_m^2}{V_m^2}\right) \to 0 \quad \text{in} \quad D_{\R^2}[0, T], \label{seq2: strong limits} \\
&\gamma_m\left[\bz_m(0) - \bz_\infty(0)\right] \to 0 \quad \text{in} \quad \R^2, \label{seq3: strong limits} \\
&\gamma_m\left(\max_{\sigma \in \Sigma_m\left(\theta, \theta\right)} \calE^1(\sigma), \max_{\sigma \in \Sigma_m\left(\theta, \theta\right)} \calE^2(\sigma)\right) \to 0 \quad \text{in} \quad \R^2. \label{seq4: strong limits}
\end{align}  
\end{subequations}
\end{lemma}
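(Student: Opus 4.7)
The plan is to apply Skorohod's representation theorem to the joint weak convergence of the four functionals appearing in \eqref{eq: weak limits}, and then to construct the graphs, configurations and Poisson processes on the resulting common probability space via regular conditional distributions.

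First I would promote the four marginal weak limits in \eqref{eq: weak limits} to a joint weak convergence of the four-tuple to $(\bz,0,0,0)$ in the product Polish space $D_{\R^2}[0,T]\times D_{\R^2}[0,T]\times\R^2\times\R^2$, where $D_{\R^2}[0,T]$ carries the Skorohod $J_1$ topology under which it is Polish; since the limits all lie in the subspace of continuous paths, $J_1$-convergence to them is equivalent to uniform convergence on $[0,T]$. Three of the four marginal limits are to deterministic constants and hence are in fact convergences in probability. Combining $\bz_m\Rightarrow\bz$ with these three convergences in probability, and iterating the standard fact that ``$X_n\Rightarrow X$ and $Y_n\to c$ in probability imply $(X_n,Y_n)\Rightarrow(X,c)$,'' yields the desired joint weak convergence of the four-tuple, which I will denote $U_m\Rightarrow U_\infty$ with $U_\infty=(\bz,0,0,0)$.

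Next I would apply Skorohod's representation theorem to obtain a Polish probability space $(\Omega_0,\calF_0,\prob_0)$ carrying random elements $\hat{U}_m$ distributed as $U_m$ and $\hat{U}_\infty$ distributed as $U_\infty$, with $\hat{U}_m\to\hat{U}_\infty$ almost surely. To recover the graphs $\calG_m$, the configurations $\bsigma_m$ and the driving Poisson processes $\calN_\pm^k$, I would enlarge $\Omega_0$ by taking $\Omega=\Omega_0\times[0,1]$ with the product of $\prob_0$ and Lebesgue measure, and use the independent uniform $W$ on $[0,1]$ to sample, for each $m$, from the regular conditional distribution of $(\calG_m,\bsigma_m,\calN_\pm^k)$ given $U_m$, evaluated at $\hat{U}_m$. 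Such regular conditional distributions exist because all spaces involved are Polish. By construction, the joint law of $(\hat{\calG}_m,\hat{\bsigma}_m,\hat{\calN}_\pm^k,\hat{U}_m)$ on $\Omega$ equals the original joint law of $(\calG_m,\bsigma_m,\calN_\pm^k,U_m)$, so that every algebraic and dynamical identity—in particular $\bz_m=z(\bsigma_m)$, the representation \eqref{eq: stochastic equation version 3}, and $\bsigma_m$ being the Glauber Markov chain on $\calG_m$ driven by $\calN_\pm^k$—is preserved on $\Omega$.

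Almost sure convergence of $\hat{U}_m$ to $\hat{U}_\infty$ on $\Omega_0$ lifts trivially to $\Omega$, and the $J_1$-to-uniform translation on $[0,T]$ is justified by the continuity of the limits. This directly yields the four almost sure limits \eqref{seq1: strong limits}--\eqref{seq4: strong limits}. The main technical care point is the regular conditional distribution construction, which is what guarantees that the common probability space supports not only the Skorohod versions of the four functionals, but also copies of all the structural objects with their original joint distribution; once this is in place, everything else is a direct reading of Skorohod's theorem combined with the constant-limit upgrade used to obtain joint weak convergence.
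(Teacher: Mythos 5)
Your proof is correct and follows essentially the same route as the paper: upgrade the four marginal weak limits in \eqref{eq: weak limits} to joint weak convergence of the four-tuple to $(\bz,0,0,0)$ via the converging-together (Slutsky) device exploiting that three of the limits are deterministic, apply Skorohod's representation theorem on the Polish product space, and then pass from Skorohod-$J_1$ convergence to uniform convergence on $[0,T]$ using the a.s.\ continuity of the limiting paths. The paper packages the converging-together step as a citation to Billingsley's Theorem 3.1 (together with a measurability remark via separability of the product), while you phrase it as iterating the standard joint-Slutsky lemma; these are equivalent.

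The one genuine difference is your explicit regular conditional distribution construction, which is not in the paper's proof. The lemma claims to construct not just the four functionals but also the graphs $\calG_m$, the configuration processes $\bsigma_m$ and the driving Poisson processes on the common space, and the subsequent proofs of properties (a') and (b') invoke the pathwise identity \eqref{eq: stochastic equation version 3} and the bound $\sup_t|\calE^k(\bsigma_m(t))|\le\max_{\sigma\in\Sigma_m(\theta,\theta)}|\calE^k(\sigma)|$ whenever $\bz_m(t)\in\calA(\theta,\theta)$; both of these reference $\bsigma_m$, which is not a component of the Skorohod-represented four-tuple. The paper simply applies Skorohod to the four-tuple and defers further details to a cited reference. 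Your observation that one must enlarge the probability space and resample $(\calG_m,\bsigma_m,\calN_\pm^k)$ from the regular conditional distribution given the Skorohod versions (so that the joint law, and hence every a.s.\ pathwise relation such as \eqref{eq: stochastic equation version 3}, is preserved) is exactly the step needed to justify the lemma's full statement and its downstream use. This is a worthwhile piece of extra care that makes the argument self-contained; it is not a different approach but a completion of the same one.
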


The above technical lemma is proved in Appendix \ref{app: other intermediate results} using Skorohod's representation theorem. In order to prove Theorem \ref{the: mean-field limit}, it remains to establish that $\Omega$ contains a subset of probability one such that all $\omega$ in this subset satisfy the following properties:
\begin{enumerate}
\item[(a')] $\bz(\omega, t) = \bz_\infty(t)$ for all $t \in [0, T]$ when \eqref{eq: zero condition for mean-field limit} and \eqref{eq: first condition for mean-field limit} hold,

\item[(b')] $\gamma_m\left[\bz_m(\omega) - \bz_\infty\right] \to 0$ in $D_{\R^2}[0, T]$ as $m \to \infty$ when \eqref{eq: second condition for mean-field limit} holds as well.
\end{enumerate}
Below we establish that the above properties hold for all $\omega$ in the set of probability one where \eqref{eq: strong limits} holds and $\bz(\omega)$ is continuous.

\subsubsection*{Proof of property (a')}

Fix any $\omega$ such that \eqref{eq: strong limits} holds and $\bz(\omega)$ is continuous; in the sequel we will omit $\omega$ from the notation for brevity. In addition, consider the time defined as
\begin{equation}
\label{aux: definition of tau}
\tau \defeq \inf\set{t \in [0, T]}{\bz(t) \neq \bz_\infty(t)}.
\end{equation}
If $\tau = T$, then (a') holds, so assume that $\tau < T$. Note that $\bz$ is continuous and $\bz_m \to \bz$ uniformly over $[0, T]$ as $m \to \infty$. Hence, there exist $\varepsilon > 0$ and $m_0 \geq 1$ such that
\begin{equation}
\label{eq: magnetization is in the attractivity region}
\bz_m(t) \in \calA(\theta, \theta) \quad \text{for all} \quad t \in [0, \tau + \varepsilon] \quad \text{and} \quad m \in \calM \quad \text{with} \quad m \geq m_0. 
\end{equation}

For all $c > 0$ and $x > \theta$, we have
\begin{equation}
\label{eq: bound for hyperbolic tangent}
\left|-1 - \tanh(-c x)\right| = \left|1 - \tanh(c x)\right| = 1 - \frac{\e^{c x} - \e^{-c x}}{\e^{c x} + \e^{-c x}} \leq 2 \e^{-c \theta}.
\end{equation}
The above inequality, \eqref{eq: first condition for mean-field limit}, \eqref{seq4: strong limits} and \eqref{eq: magnetization is in the attractivity region} imply that
\begin{align*}
&\lim_{m \to \infty} \sup_{t \in [0, \tau + \varepsilon]}\left|\tanh\left(\beta_m \lambda_m \calL^k\left(\bz_m(t)\right)\right) - \ind{k = 1} + \ind{k = 2}\right| = 0, \\
&\lim_{m \to \infty} \sup_{t \in [0, \tau + \varepsilon]} \calE^k\left(\bsigma_m(t)\right) \leq \lim_{m \to \infty} \max_{\sigma \in \Sigma_m(\theta, \theta)} \calE^k(\sigma) = 0.
\end{align*}
If we replace the hyperbolic tangent in the first limit by a sign function, then the limit still holds, which covers the case $\beta_m = \infty$. Now it follows from \eqref{eq: stochastic equation version 3} and \eqref{eq: strong limits} that
\begin{equation*}
\bz^k(t) = \bz_\infty^k(0) + \int_0^t \left[\ind{k = 1} - \ind{k = 2} - \bz^k(\zeta)\right]d\zeta \quad \text{for all} \quad t \in [0, \tau + \varepsilon] \quad \text{and} \quad k \in \{1, 2\}.
\end{equation*}
Then $\bz(t) = \bz_\infty(t)$ for all $t \in [0, \tau + \varepsilon]$ contradicting \eqref{aux: definition of tau}, so (a') must hold.

\subsubsection*{Proof of property (b')}

Suppose that \eqref{eq: second condition for mean-field limit} holds and fix any $\omega$ such that \eqref{eq: strong limits} holds and $\bz(\omega)$ is continuous; we omit $\omega$ from the notation for brevity. Since (a') holds, by similar arguments as for \eqref{eq: magnetization is in the attractivity region}, there exists $m_1 \geq 1$ such that
\begin{equation*}
\bz_m(t) \in \calA(\theta, \theta) \quad \text{for all} \quad t \in [0, T] \quad \text{and} \quad m \in \calM \quad \text{with} \quad m \geq m_1. 
\end{equation*}

Then it follows from \eqref{eq: second condition for mean-field limit}, \eqref{seq4: strong limits} and \eqref{eq: bound for hyperbolic tangent} that the following limits hold:
\begin{equation}
\label{eq: some uniform limits}
\begin{split}
&\lim_{m \to \infty} \sup_{t \in [0, T]} \gamma_m\left|\tanh\left(\beta_m \lambda_m \calL^k\left(\bz_m(t)\right)\right) - \left(\ind{k = 1} - \ind{k = 2}\right)\right| = 0, \\
&\lim_{m \to \infty} \sup_{t \in [0, T]} \gamma_m\calE^k\left(\bsigma_m(t)\right) \leq \lim_{m \to \infty} \max_{\sigma \in \Sigma_m(\theta, \theta)} \gamma_m \calE^k(\sigma) = 0.
\end{split}
\end{equation}

Let $\bd_m^k(t) \defeq \gamma_m |\bz_m^k(t) - \bz_\infty^k(t)|$. The definition of $\bz_\infty$ and \eqref{eq: stochastic equation version 3} imply that
\begin{align*}
\bd_m^k(t) &\leq \bd_m^k(0) + \int_0^t \gamma_m \left|\calE^k\left(\bsigma_m(\zeta)\right)\right|d\zeta + \frac{2\gamma_m}{V_m^k}\left|\calM_m^k(t)\right| \\
&+ \int_0^t \gamma_m \left|\tanh\left(\beta_m \lambda_m \calL^k\left(\bz_m(\zeta)\right)\right) - \ind{k = 1} + \ind{k = 2}\right|d\zeta + \int_0^t \bd_m^k(\zeta)d\zeta
\end{align*}
for all $t \in [0, T]$. Taking the supremum over $s \in [0, t]$ on both sides, we obtain:
\begin{equation}
\label{eq: bound to invoke gronwall}
\begin{split}
\sup_{s \in [0, t]} \bd_m^k(s) &\leq \bd_m^k(0) + t\sup_{s \in [0, t]}\gamma_m \left|\calE^k\left(\bsigma_m(s)\right)\right| + \sup_{s \in [0, t]} \frac{2\gamma_m}{V_m^k}\left|\calM_m^k(s)\right| \\
+& t \sup_{s \in [0, t]} \gamma_m \left|\tanh\left(\beta_m \lambda_m \calL^k\left(\bz_m(s)\right)\right) - \ind{k = 1} + \ind{k = 2}\right| + \int_0^t \sup_{s \in [0, \zeta]} \bd_m^k(s)d\zeta.
\end{split}
\end{equation}

For brevity, let us write
\begin{align*}
\varepsilon_m^k &\defeq \bd_m^k(0) + T \sup_{t \in [0, T]}\gamma_m \left|\calE^k\left(\bsigma_m(t)\right)\right| + \sup_{t \in [0, T]} \frac{2\gamma_m}{V_m^k}\left|\calM_m^k(t)\right| \\
&+ T \sup_{t \in [0, T]} \gamma_m \left|\tanh\left(\beta_m \lambda_m \calL^k\left(\bz_m(t)\right)\right) - \ind{k = 1} + \ind{k = 2}\right|.
\end{align*}
We conclude from Gr\"{o}nwall's inequality, \eqref{seq2: strong limits}, \eqref{seq3: strong limits} and \eqref{eq: some uniform limits} that
\begin{equation*}
\lim_{m \to \infty} \sup_{t \in [0, T]} \bd_m^k(t) \leq \lim_{m \to \infty} \varepsilon_m^k \e^T = 0.
\end{equation*}
This proves that (b') holds and therefore completes the proof of Theorem \ref{the: mean-field limit}.

\section{Proof of Theorem \ref{the: almost exact recovery}}
\label{sec: almost exact recovery}

In this section we prove Theorem \ref{the: almost exact recovery}. By \eqref{eq: explicit expression for mean-field limit} and the triangle inequality,
\begin{equation*}
\max\left\{\left|\bz_n^1(t_n) - 1\right|, \left|\bz_n^2(t_n) + 1\right|\right\} \leq \frac{1 - \eta}{\lambda_n^c} + \max_{k = 1, 2} \bd_n^k(t_n),
\end{equation*}
where $\bd_n^k(t) \defeq |\bz_n^k(t) - \bz_\infty^k(t)|$ for all $t \geq 0$. Therefore, it suffices to show that
\begin{equation*}
\max_{k = 1, 2} \bd_n^k(t_n) \Rightarrow 0 \quad \text{as} \quad n \to \infty.
\end{equation*}
Note that $\bd_n^k(t)$ is as in Section \ref{sec: mean-field limit} with $\gamma_n = 1$. However, in Section \ref{sec: mean-field limit} the variable $t$ ranged in a fixed interval $[0, T]$ for all $n$, whereas here $t$ ranges in $[0, t_n]$ and $t_n \to \infty$ as $n \to \infty$. Hence, we must resort to different arguments to prove the above limit.

It follows from \eqref{eq: zero condition for mean-field limit} that \eqref{eq: lower bound l functions} holds for all $t \geq 0$, so we can take $\zeta > 0$ such that
\begin{equation*}
\min\left\{\calL^1\left(\bz_\infty(t)\right), -\calL^2\left(\bz_\infty(t)\right)\right\} \geq \min\left\{\calL^1\left(\bz_\infty(0)\right), -\calL^2\left(\bz_\infty(0)\right)\right\} > \zeta \quad \text{for all} \quad t \geq 0.
\end{equation*}
Let us fix some $d \in (c,1-2c)$ and define the following constant and random time:
\begin{equation*}
\xi_n \defeq \frac{1}{\lambda_n^d} \quad \text{and} \quad \tau_n \defeq \min\left\{t_n, \inf\set{t \geq 0}{\bz_n(t) \notin \calA\left(\zeta, \xi_n\right)}\right\}.
\end{equation*}

By definition of $\zeta$ and the linearity of $\calL^1$ and $\calL^2$, there exists $\theta > 0$ such that
\begin{equation*}
\max_{k = 1, 2} \bd_n^k(t) \leq \theta \quad \text{implies that} \quad \min\left\{\calL^1\left(\bz_n(t)\right), -\calL^2\left(\bz_n(t)\right)\right\} \geq \zeta.
\end{equation*}
By \eqref{eq: explicit expression for mean-field limit}, we have $\bz_n^1(t) \leq \bz_\infty^1(t) + \bd_n^1(t) = 1 + (\eta - 1)\e^{-t} + \bd_n^1(t)$. Thus, $[\xi_n + \bd_n^1(t)]\e^t \leq 1 - \eta$ gives $\bz_n^1(t) \leq 1 - \xi_n$. Using similar arguments, we obtain:
\begin{equation*}
\left[\xi_n + \max_{k = 1, 2} \bd_n^k(t)\right]\e^t \leq 1 - \eta \quad \text{implies that} \quad \bz_n(t) \in \left[-1 + \xi_n, 1 - \xi_n\right]^2.
\end{equation*}
Combining the two observations, we conclude that
\begin{equation*}
\max_{k = 1, 2} \bd_n^k(t) \leq \theta \quad \text{and} \quad \left[\xi_n + \max_{k = 1, 2} \bd_n^k(t)\right]\e^t \leq 1 - \eta \quad \text{imply that} \quad \bz_n(t) \in \calA(\zeta, \xi_n).
\end{equation*}

In order to prove \eqref{eq: almost exact recovery}, it suffices to show that
\begin{equation}
\label{eq: key step for almost exact recovery}
\left[\xi_n + \max_{k = 1, 2} \sup_{t \in [0, \tau_n]} \bd_n^k(t)\right]\e^{t_n} \Rightarrow 0 \quad \text{as} \quad n \to \infty.
\end{equation}
Indeed, $(1 - \eta)\e^{-t_n} \leq \theta$ for all sufficiently large $n$, and thus
\begin{equation*}
\left[\xi_n + \max_{k = 1, 2} \sup_{t \in [0, \tau_n]} \bd_n^k(t)\right]\e^{t_n} \leq 1 - \eta \quad \text{implies that} \quad \bz_n(t) \in \calA(\zeta, \xi_n) \quad \text{for all} \quad t \in [0, \tau_n].
\end{equation*}
Note that $\bz_n(t) \in \calA(\zeta, \xi_n)$ for all $t \in [0, \tau_n]$ implies that $\tau_n = t_n$. Hence, \eqref{eq: key step for almost exact recovery} yields
\begin{equation*}
\lim_{n \to \infty}\condp*{\left[\xi_n + \max_{k = 1, 2} \sup_{t \in [0, t_n]} \bd_n^k(t)\right]\e^{t_n} < \varepsilon} = \lim_{n \to \infty}\condp*{\left[\xi_n + \max_{k = 1, 2} \sup_{t \in [0, \tau_n]} \bd_n^k(t)\right]\e^{t_n} < \varepsilon} = 1 
\end{equation*}
if $0 < \varepsilon \leq 1 - \eta$. Thus, \eqref{eq: key step for almost exact recovery} implies \eqref{eq: almost exact recovery}, because the above limit implies that
\begin{equation*}
\max_{k = 1, 2} \bd_n^k(t_n) \leq \left[\xi_n + \max_{k = 1, 2} \sup_{t \in [0, t_n]} \bd_n^k(t)\right]\e^{t_n} \Rightarrow 0 \quad \text{as} \quad n \to \infty.
\end{equation*}

We proceed with the proof of \eqref{eq: key step for almost exact recovery}. As in \eqref{eq: bound to invoke gronwall}, we obtain:
\begin{align*}
\sup_{s \in [0, t]} \bd_n^k(s) &\leq \bd_n^k(0) + t\sup_{s \in [0, t]} \left|\calE^k\left(\bsigma_n(s)\right)\right| + \sup_{s \in [0, t]} \frac{2}{V_n^k}\left|\calM_n^k(s)\right| \\
&+ t\sup_{s \in [0, t]} \left|\tanh\left(\beta_n \lambda_n \calL^k\left(\bz_n(s)\right)\right) - \ind{k = 1} + \ind{k = 2}\right| + \int_0^t \sup_{s \in [0, \tau]} \bd_n^k(s)d\tau
\end{align*}
for all $t \geq 0$. For brevity, let us introduce the following notation:
\begin{align*}
\boldf_n^k(t) &\defeq \bd_n^k(0) + \sup_{s \in [0, t]} \frac{2}{V_n^k}\left|\calM_n^k(s)\right| \\
\bg_n^k(t) &\defeq \sup_{s \in [0, t]} \left|\calE^k\left(\bsigma_n(s)\right)\right| + \sup_{s \in [0, t]} \left|\tanh\left(\beta_n \lambda_n \calL^k\left(\bz_n(s)\right)\right) - \ind{k = 1} + \ind{k = 2}\right|.
\end{align*}
It follows from Gr\"{o}nwall's inequality and $\tau_n \leq t_n = c \log \lambda_n$ that
\begin{equation}
\label{eq: gronwall}
\sup_{t \in [0, \tau_n]} \bd_n^k(t) \leq \left[\boldf_n^k\left(\tau_n\right) + \tau_n \bg_n^k\left(\tau_n\right)\right]\e^{\tau_n} \leq \lambda_n^c\boldf_n^k\left(t_n\right) + \lambda_n^c t_n \bg_n^k\left(\tau_n\right).
\end{equation}

Next we prove that the right-hand side times $\e^{t_n} = \lambda_n^c$ vanishes. First, we have
\begin{equation}
\label{eq: limit for f}
\lambda_n^{2c} \boldf_n^k(t_n) \Rightarrow 0 \quad \text{as} \quad n \to \infty \quad \text{for all} \quad k \in \{1, 2\}
\end{equation}
by Proposition \ref{prop: limit of poisson term} of Section \ref{sec: approximation errors} and Lemma \ref{lem: initial conditions} of Appendix \ref{app: other intermediate results}; the proposition holds because $4c < 1$ yields $\left(\lambda_n^{4c} \log \lambda_n\right) / n \to 0$ as $n \to \infty$. Second, $\bsigma_n(t) \in \Sigma_n(\zeta, \xi_n)$ if $t \in [0, \tau_n]$, and
\begin{equation*}
\lim_{n \to \infty} \xi_n \lambda_n = \lim_{n \to \infty} \lambda_n^{1 - d} = \infty \quad \text{and} \quad \lim_{n \to \infty} \frac{\lambda_n^{2c}t_n}{\xi_n \lambda_n} = \lim_{n \to \infty} c \lambda_n^{2c + d - 1}\log \lambda_n = 0
\end{equation*}
since $d < 1 - 2c$. As a result, we may invoke Proposition \ref{prop: mean-field approximation} of Section \ref{sec: approximation errors} to obtain
\begin{equation*}
\lambda_n^{2c}t_n \sup_{s \in [0, \tau_n]} \left|\calE^k\left(\bsigma_n(s)\right)\right| \Rightarrow 0 \quad \text{as} \quad n \to \infty \quad \text{for all} \quad k \in \{1, 2\}.
\end{equation*}
Furthermore, $\bz_n(t) \in \calA(\zeta, \xi_n)$ for all $t \in [0, \tau_n]$, so we conclude from \eqref{eq: condition for almost exact recovery} and \eqref{eq: bound for hyperbolic tangent} that
\begin{equation}
\label{eq: limit for g}
\lambda_n^{2c} t_n \bg_n^k(\tau_n) \Rightarrow 0 \quad \text{as} \quad n \to \infty \quad \text{for all} \quad k \in \{1, 2\}.
\end{equation}

It follows from \eqref{eq: gronwall}, \eqref{eq: limit for f} and \eqref{eq: limit for g} that
\begin{equation*}
\e^{t_n} \max_{k = 1, 2} \sup_{t \in [0, \tau_n]} \bd_n^k(t) \leq \sum_{k = 1, 2} \left[\lambda_n^{2c} \boldf_n^k(t_n) + \lambda_n^{2c}t_n\bg_n^k(\tau_n)\right] \Rightarrow 0 \quad \text{as} \quad n \to \infty. 
\end{equation*}
Since $\xi_n\e^{t_n} = \lambda_n^{c - d}$ and $d > c$, we conclude that \eqref{eq: key step for almost exact recovery} holds, which proves \eqref{eq: almost exact recovery}. Note that the maximum on the left-hand side of \eqref{eq: almost exact recovery} is bounded and thus uniformly integrable over~$n$. Hence, the limit in \eqref{eq: almost exact recovery} holds in expectation as well, which proves Theorem \ref{the: almost exact recovery}.

\section{Approximation errors}
\label{sec: approximation errors}

In this section we establish some of the intermediate results that we used in the proofs of Theorems \ref{the: mean-field limit} and \ref{the: almost exact recovery}. In particular, we bound the last two terms on the right-hand side of~\eqref{eq: stochastic equation version 2}. The following proposition takes care of the first of these error terms.

\begin{proposition}
\label{prop: limit of poisson term}
Consider sequences $\gamma_n, t_n > 0$ such that $\gamma_n^2 t_n / n \to 0$ as $n \to \infty$. Then
\begin{equation}
\label{eq: bound for centered poisson process}
\sup_{t \in [0, t_n]} \frac{\gamma_n}{V_n^k}\left|\calM_n^k(t)\right| \Rightarrow 0 \quad \text{as} \quad n \to \infty \quad \text{for all} \quad k \in \{1, 2\}.
\end{equation}
If $\gamma_n / \sqrt{n} \to 0$ as $n \to \infty$, then we further have
\begin{equation}
\label{eq: weak limit for centered poisson processes}
\gamma_n\left(\frac{\calM_n^1}{V_n^1}, \frac{\calM_n^2}{V_n^2}\right) \Rightarrow 0 \quad \text{in} \quad D_{\R^2}[0, \infty) \quad \text{as} \quad n \to \infty.
\end{equation}
\end{proposition}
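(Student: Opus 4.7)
The plan is to exhibit $\calM_n^k$ as a square-integrable martingale, bound its predictable quadratic variation in a deterministic way, and then apply Doob's $L^2$ maximal inequality together with Chebyshev's inequality.

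\textbf{Step 1: Martingale structure.} Writing
\begin{equation*}
A_+^k(t) \defeq \int_0^t \sum_{u \in \calY_-^k\left(\bsigma_n(\tau)\right)} r\left(\beta_n \Delta_n\left(\bsigma_n(\tau), u\right)\right)d\tau, \quad A_-^k(t) \defeq \int_0^t \sum_{u \in \calY_+^k\left(\bsigma_n(\tau)\right)} r\left(\beta_n \Delta_n\left(\bsigma_n(\tau), u\right)\right)d\tau,
\end{equation*}
I would observe that $A_+^k(t) - A_-^k(t) = - \int_0^t \sum_{u \in \calV_n^k} \bsigma_n(\tau, u) r(\beta_n \Delta_n(\bsigma_n(\tau), u)) d\tau$, so that
\begin{equation*}
\calM_n^k(t) = \bigl[\calN_+^k\left(A_+^k(t)\right) - A_+^k(t)\bigr] - \bigl[\calN_-^k\left(A_-^k(t)\right) - A_-^k(t)\bigr].
\end{equation*}
The processes $A_\pm^k$ are continuous, adapted and nondecreasing with respect to the natural filtration $\set{\calF_t^n}{t \geq 0}$ generated by the Poisson processes and the initial configuration. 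Since the compensated Poisson processes $\calN_\pm^k(\scdot) - \scdot$ are martingales with respect to their own filtrations and $\calN_+^k$ and $\calN_-^k$ are independent, each time-changed compensated process is an $\calF_t^n$-martingale by the optional sampling theorem, and therefore so is $\calM_n^k$.

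\textbf{Step 2: Quadratic variation and Doob's inequality.} Since the two Poisson processes are independent, the martingale terms are orthogonal, and the predictable quadratic variation is
\begin{equation*}
\bigl\langle \calM_n^k \bigr\rangle(t) = A_+^k(t) + A_-^k(t) = \int_0^t \sum_{u \in \calV_n^k} r\bigl(\beta_n \Delta_n(\bsigma_n(\tau), u)\bigr)d\tau \leq V_n^k t,
\end{equation*}
using $r \leq 1$. Doob's $L^2$ inequality then yields
\begin{equation*}
\E\left[\sup_{t \in [0, t_n]} \bigl|\calM_n^k(t)\bigr|^2\right] \leq 4\, \E\bigl[\bigl|\calM_n^k(t_n)\bigr|^2\bigr] = 4\, \E\bigl[\bigl\langle \calM_n^k \bigr\rangle(t_n)\bigr] \leq 4 V_n^k t_n.
\end{equation*}
Combining this with Chebyshev's inequality and recalling $V_n^k / n \to v^k > 0$, for each $\varepsilon > 0$,
\begin{equation*}
\prob\left(\sup_{t \in [0, t_n]} \frac{\gamma_n}{V_n^k}\bigl|\calM_n^k(t)\bigr| > \varepsilon\right) \leq \frac{4 \gamma_n^2 t_n}{\varepsilon^2 V_n^k} \longrightarrow 0
\end{equation*}
under the assumption $\gamma_n^2 t_n / n \to 0$, which proves \eqref{eq: bound for centered poisson process}.

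\textbf{Step 3: Weak convergence on $D_{\R^2}[0, \infty)$.} For \eqref{eq: weak limit for centered poisson processes}, it suffices to show uniform convergence in probability to zero on every compact interval $[0, T]$, since this implies weak convergence in $D_{\R^2}[0, T]$ and the topology on $D_{\R^2}[0, \infty)$ is that of uniform convergence on compact sets. Fixing $T \geq 0$ and applying Step 2 with the constant sequence $t_n = T$, the condition $\gamma_n^2 T / n \to 0$ follows from $\gamma_n / \sqrt{n} \to 0$, so $\sup_{t \in [0, T]} \gamma_n |\calM_n^k(t)| / V_n^k \Rightarrow 0$ as $n \to \infty$ for each $k$, completing the proof.

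The main obstacle is Step 1: correctly identifying the martingale and verifying that the random time changes preserve the martingale property, so that Doob's inequality is legitimately applicable; after this, the deterministic bound $\langle \calM_n^k\rangle(t) \leq V_n^k t$ via $r \leq 1$ and the reduction to Chebyshev are routine.
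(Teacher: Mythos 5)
Your proof is correct and follows essentially the same approach as the paper: both rest on Doob's maximal inequality for Poisson-type martingales, the deterministic bound $V_n^k t$ on the integrated rates, and a Chebyshev step. The only organizational difference is that you treat $\calM_n^k$ directly as a square-integrable martingale with quadratic variation $A_+^k + A_-^k \le V_n^k t$ and apply Doob's $L^2$ inequality to it, whereas the paper bounds $\sup_{t\le t_n} |\calM_n^k(t)|$ by suprema of the underlying unit-rate compensated Poisson processes over the enlarged deterministic interval $[0, c^k t_n]$ (avoiding the need to verify the martingale property of the time-changed process) and then applies Doob's submartingale inequality to those; the two routes yield the same bound $O(\gamma_n^2 t_n / n)$.
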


\begin{proof}
Let $\calN$ be a Poisson process with unit rate. By Doob's submartingale inequality,
\begin{equation}
\label{eq: doob submartingale inequality}
\condp*{\sup_{t \in [0, t_n]} \gamma_n \left|\frac{\calN(nt)}{n} - t\right| \geq \varepsilon} \leq \left(\frac{\gamma_n}{\varepsilon n}\right)^2 \conde*{\left|\calN(n t_n) - nt_n\right|^2} = \left(\frac{\gamma_n}{\varepsilon}\right)^2 \frac{t_n}{n}.
\end{equation}

Consider the processes defined as
\begin{equation*}
\bi_n^{k, s}(t) \defeq \int_0^t \sum_{u \in \calY_s^k(\bsigma_n(\tau))} r\left(\beta_n \Delta_n\left(\bsigma_n(\tau), u\right)\right)d\tau \leq V_n^k t \quad \text{for all} \quad t \geq 0.
\end{equation*}
Since $V_n^k / n \to v^k$ as $n \to \infty$, there exists $c^k > 0$ such that $V_n^k \leq c^k n$ for all $n \geq 1$. Thus,
\begin{align*}
\sup_{t \in [0, t_n]} \frac{\gamma_n}{V_n^k} \left|\calM_n^k(t)\right| &\leq \sup_{t \in [0, t_n]} \frac{\gamma_n}{V_n^k} \left|\calN_+^k\left(\bi_n^{k, -}(t)\right) - \bi_n^{k, -}(t)\right| + \sup_{t \in [0, t_n]} \frac{\gamma_n}{V_n^k} \left|\calN_-^k\left(\bi_n^{k, +}(t)\right) - \bi_n^{k, +}(t)\right| \\
&\leq \sup_{t \in [0, c^k t_n]} \frac{n\gamma_n}{V_n^k}\left|\frac{\calN_+^k(nt)}{n} - t\right| + \sup_{t \in [0, c^k t_n]} \frac{n\gamma_n}{V_n^k}\left|\frac{\calN_-^k(nt)}{n} - t\right|.
\end{align*}
It follows from \eqref{eq: doob submartingale inequality} that \eqref{eq: bound for centered poisson process} holds.

If $\gamma_n / \sqrt{n} \to 0$ as $n \to \infty$, then \eqref{eq: bound for centered poisson process} implies that
\begin{equation*}
\sup_{t \in [0, T]} \frac{\gamma_n}{V_n^k}\left|\calM_n^k(t)\right| \Rightarrow 0 \quad \text{as} \quad n \to \infty \quad \text{for all} \quad T \geq 0 \quad \text{and} \quad k \in \{1, 2\}.
\end{equation*}
We obtain \eqref{eq: weak limit for centered poisson processes} as a straightforward consequence of this observation.
\end{proof}

For each configuration $\sigma \in \Sigma_n$, recall that
\begin{equation*}
\calE^k(\sigma) \defeq \frac{2}{V_n^k}\sum_{u \in \calV_n^k} \left[r\left(\beta_n \Delta_n\left(\sigma, u\right)\right) - r\left(2\beta_n\lambda_n\sigma(u)\calL^k\left(z(\sigma)\right)\right)\right].
\end{equation*}
Given two constants $\xi \in (0, 1)$ and $\zeta > 0$, we also recall that:
\begin{align*}
&\calA(\zeta, \xi) \defeq \set{z \in \left[-1 + \xi, 1 - \xi\right]^2}{\min\left\{\calL^1\left(z\right), -\calL^2\left(z\right)\right\} \geq \zeta}, \\
&\Sigma_n(\zeta, \xi) \defeq \set{\sigma \in \Sigma_n}{z(\sigma) \in \calA(\zeta, \xi)}.
\end{align*}
The next proposition is used in the proofs of Theorems \ref{the: mean-field limit} and \ref{the: almost exact recovery} to bound the last term on the right-hand side of \eqref{eq: stochastic equation version 2}. Its proof uses two lemmas that we derive in Appendix \ref{app: concentration inequalities}.

\begin{proposition}
\label{prop: mean-field approximation}
Consider sequences $\gamma_n > 0$ and $\xi_n \in (0, 1)$ such that
\begin{equation}
\label{eq: assumptions for mean-field approximation}
\lim_{n \to \infty} \xi_n \lambda_n = \infty \quad \text{and} \quad \lim_{n \to \infty} \frac{\gamma_n}{\xi_n \lambda_n} = 0.
\end{equation}
Then the following limits hold:
\begin{equation*}
\max_{\sigma \in \Sigma_n(\zeta, \xi_n)} \gamma_n \calE^k(\sigma) \Rightarrow 0 \quad \text{as} \quad n \to \infty \quad \text{for all} \quad \zeta > 0 \quad \text{and} \quad k \in \{1, 2\}.
\end{equation*}
\end{proposition}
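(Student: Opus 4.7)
\emph{Proof proposal.}
My plan is to reduce the claim to a uniform concentration statement for the edge-count fluctuations, and then to exploit the saturating behaviour of $r$ together with the definition of $\Sigma_n(\zeta, \xi_n)$. Starting from the expression \eqref{eq: alternative expression for energy variation}, I would first write
\begin{equation*}
\Delta_n(\sigma, u) - 2\lambda_n \sigma(u) \calL^k(z(\sigma)) = 2 \sigma(u) \bC(u; \sigma) + R_n(\sigma, u),
\end{equation*}
where
\begin{equation*}
\bC(u; \sigma) \defeq \sum_{l = 1, 2}\left(\sum_{v \in \calN_n^l(u)} \sigma(v) - p_n(k, l) V_n^l z^l(\sigma)\right)
\end{equation*}
captures the random fluctuation of edge-counts around their mean-field proxies, while $R_n(\sigma, u)$ collects the deterministic approximation errors produced by $p_n(k, l) V_n^l - p(k, l) v^l \lambda_n$ and $\alpha_n V_n^l - \alpha v^l \lambda_n$, plus the residual $+2\alpha_n$ term. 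Because $V_n^l / n \to v^l$ and $|z^l(\sigma)| \leq 1$, one has $\|R_n\|_\infty = o(\lambda_n)$ deterministically.

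The heart of the argument, which I expect the two lemmas from Appendix \ref{app: concentration inequalities} to deliver, is a uniform-in-$\sigma$ control of $\bC$: for a sequence $\delta_n \to 0$ to be chosen below,
\begin{equation*}
\max_{\sigma \in \Sigma_n(\zeta, \xi_n)} \frac{\left|\set{u \in \calV_n^k}{|\bC(u; \sigma)| > \delta_n \xi_n \lambda_n}\right|}{V_n^k} \Rightarrow 0 \quad \text{as} \quad n \to \infty.
\end{equation*}
Pointwise in $\sigma$ and $u$, $\bC(u; \sigma)$ is a centred sum of independent bounded terms $W_{uv}^l \defeq \ind{u \sim v} - p_n(k, l)$ with total variance of order $\lambda_n$, so Bernstein's inequality produces tails of order $\exp(-c\delta_n^2 \xi_n^2 \lambda_n)$; combined with $\xi_n \lambda_n \to \infty$, this handles any single $\sigma$. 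The delicate part is the \emph{uniformity}, since a blind union bound over the $2^{V_n}$ configurations is way too costly; the argument must instead exploit the fact that $\Sigma_n(\zeta, \xi_n)$ is cut out by two linear constraints on $z(\sigma) \in \R^2$, which should allow one to reduce to a polynomial-size net of representative magnetization vectors and use a second concentration step to control all $\sigma$ sharing a given magnetization profile.

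Given the uniform concentration, I partition $\calV_n^k = \calV_n^{k, \mathrm{good}}(\sigma) \cup \calV_n^{k, \mathrm{bad}}(\sigma)$ with $\calV_n^{k, \mathrm{good}}(\sigma) \defeq \set{u}{|\bC(u; \sigma)| \leq \delta_n \xi_n \lambda_n}$. On the good set, the constraint $\sigma \in \Sigma_n(\zeta, \xi_n)$ guarantees $|\sigma(u)\calL^k(z(\sigma))| \geq \zeta$, so both $\beta_n \Delta_n(\sigma, u)$ and $2\beta_n \lambda_n \sigma(u) \calL^k(z(\sigma))$ share the same sign and have magnitude at least a constant times $\beta_n \lambda_n \zeta$; combining the Lipschitz bound $|r(x) - r(y)| \leq |x - y|/4$ with the exponential saturation of $r$ away from zero, each summand contributes $o(1/\gamma_n)$ uniformly. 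On the bad set the trivial bound $|r(\cdot) - r(\cdot)| \leq 1$ is multiplied by the vanishing relative size from the uniform concentration. Averaging and multiplying by $\gamma_n$, the two hypotheses $\xi_n \lambda_n \to \infty$ and $\gamma_n/(\xi_n \lambda_n) \to 0$ drive both contributions to zero. The principal obstacle, as noted, is the uniformization in the second paragraph: pointwise Bernstein is routine, but producing a uniform statement on $\Sigma_n(\zeta, \xi_n)$ with the right threshold $\delta_n \xi_n \lambda_n$ is where the combinatorial work really happens, and this is the step I would expect to require the full strength of the appendix lemmas.
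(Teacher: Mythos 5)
Your high-level plan --- decompose into fluctuation plus deterministic error, establish a uniform bad-node bound, and use sign agreement plus saturation of $r$ on the good set --- matches the paper's. But the place where you say the argument ``must instead exploit'' a net over magnetization profiles (and dismiss a blind union bound over the $2^{V_n}$ configurations as ``way too costly'') is exactly where you go wrong: the paper's Lemma \ref{lem: concentration of signed degrees} does take a union bound over all $2^{V_n}$ configurations, and it works, because for fixed $\sigma$ the bad-node count is stochastically dominated by a $\Bin(V_n^k, 2\e^{-\delta_n/3})$ variable whose upper tail is of order $\exp(-c\varepsilon n \delta_n/\gamma_n)$, and the condition $\delta_n/\gamma_n \to \infty$ lets this swamp $2^{V_n} = \e^{O(n)}$. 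Your proposed net does not resolve the uniformization, since for any fixed target $z$ there remain exponentially many $\sigma$ with $z(\sigma)=z$, and you would need the same concentration over that exponential class anyway.

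Moreover, the binomial domination that makes the $2^{V_n}$ union bound affordable requires the bad-node indicators to be independent over $u$ for fixed $\sigma$, and your $\bC(u;\sigma) = \sum_{v\neq u}\sigma(v)\left(\ind{u\sim v}-p_n(\cdot)\right)$ does not have that property: for $u, u'\in\calY_s^k(\sigma)$ both variables involve the edge $\{u,u'\}$, with the dependence concentrated in the piece $\hat{Y}_s^k(\sigma,u)$ counting edges to same-community, same-spin nodes. The paper's proof is engineered around precisely this obstruction: it uses
\begin{equation*}
\sum_{v \in \calN_n(u)} \sigma(v) = s\,N_n^k(u) - 2s\,\hat{Y}_{\bars}^k(\sigma, u) + \hat{Y}_+^\bark(\sigma, u) - \hat{Y}_-^\bark(\sigma, u), \qquad u \in \calY_s^k(\sigma),\ \bars = -s,
\end{equation*}
to trade $\hat{Y}_s^k(\sigma,u)$ for $N_n^k(u)$, which is $\sigma$-independent and controlled by a second-moment argument (Lemma \ref{lem: concentration of intracommunity degrees}) with no union over configurations, while the remaining $\hat{Y}_t^l(\sigma,u)$ with $(l,t)\neq(k,s)$ are genuinely independent over $u\in\calY_s^k(\sigma)$ for fixed $\sigma$, licensing the binomial domination. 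A minor additional issue: the Lipschitz bound $|r(x)-r(y)|\leq|x-y|/4$ cannot be part of the argument, since the proposition covers $\beta_n=\infty$ where $r$ is discontinuous; the paper uses only the same-sign saturation bound $|r(\beta_n x)-r(\beta_n y)|\leq\e^{-\beta_n\min\{|x|,|y|\}}$.
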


\begin{proof}
Recall the notation $\bark \defeq 3 - k$. It follows from \eqref{eq: alternative expression for energy variation} and $|z_k| \leq 1$ that
\begin{equation}
\label{eq: distance between delta and l}
\begin{split}
\left|\Delta_n(\sigma, u) - 2\lambda_n\sigma(u)\calL^k\left(z(\sigma)\right)\right| &\leq 2\left|\sum_{v \in \calN_n(u)} \sigma(v) - \lambda_n\left[a v^k z^k(\sigma) + b v^\bark z^\bark(\sigma)\right]\right| \\ 
&+ 2\alpha \lambda_n \left(\left|\frac{V_n^k}{n} - v^k\right| + \left|\frac{V_n^\bark}{n} - v^\bark\right|\right) + 2\alpha_n.
\end{split}
\end{equation}
Below we show that the right-hand side is small for most nodes $u \in \calV_n^k$ when $\sigma \in \Sigma_n(\zeta, \xi_n)$ and $n$ is sufficiently large. We will conclude that $\Delta_n(\sigma, u)$ and $2\lambda_n\sigma(u)\calL^k\left(z(\sigma)\right)$ have the same sign and then prove the proposition by leveraging the following inequality:
\begin{equation}
\label{eq: bound for variation in r}
\left|r(\beta_n x) - r(\beta_n y)\right| \leq \e^{-\beta_n\min\{|x|, |y|\}} \quad \text{if} \quad \sign(x) = \sign(y).
\end{equation}
This inequality holds with zero on the right-hand side if $\beta_n = \infty$ and we use \eqref{eq: convention for infinite inverse temperature}.

Fix $\zeta > 0$ and $k \in \{1, 2\}$. Then choose some $0 < 2c < \zeta$, $d > 0$ and $n_0 \geq 1$ such that
\begin{equation*}
\left|\frac{V_n^k}{n} - v^k\right| + \left|\frac{V_n^\bark}{n} - v^\bark\right| \leq \frac{c}{\max\{\alpha, a\}} \quad \text{and} \quad 3 \sqrt{\frac{a V_n^k}{n}} + 2 \sqrt{\frac{b V_n^\bark}{n}} \leq d \quad \text{for all} \quad n \geq n_0.
\end{equation*}
In addition, choose constants $\delta_n > 0$ satisfying the following conditions:
\begin{equation}
\label{eq: conditions on gamma and delta}
\lim_{n \to \infty} \delta_n = \infty, \quad \lim_{n \to \infty} \frac{\delta_n}{\xi_n \lambda_n} = 0 \quad \text{and} \quad \lim_{n \to \infty} \frac{\delta_n}{\gamma_n} = \infty.
\end{equation}
If $\sqrt{\xi_n\lambda_n \gamma_n} \to \infty$ as $n \to \infty$, then take $\delta_n = \sqrt{\xi_n\lambda_n \gamma_n}$. Otherwise, $\gamma_n \to 0$ as $n \to \infty$, possibly only along some subsequence. If $\gamma_n \to 0$ as $n \to \infty$, then the third condition is implied by the first one and we can easily choose a sequence $\delta_n$ that satisfies the first two conditions. Now for each $\sigma \in \Sigma_n$ and $u \in \calV_n^k$, define
\begin{align*}
&A_n^k(u) \defeq \left\{\left|N_n^k(u) - a_n V_n^k\right| \geq \sqrt{a_n V_n^k \delta_n}\right\}, \\
&B_s^{k, l}(\sigma, u) \defeq \left\{\left|\hat{Y}_s^l(\sigma, u) - p_n(k, l) Y_s^l(\sigma)\right| \geq \sqrt{p_n(k, l) Y_s^l(\sigma) \delta_n}\right\}.
\end{align*}
Moreover, consider the event defined as
\begin{equation*}
C_s^k(\sigma, u) \defeq A_n^k(u) \bigcup \left[\bigcup_{(l, t) \neq (k, s)} B_t^{k, l}(\sigma, u)\right] \quad \text{for all} \quad \sigma \in \Sigma_n \quad \text{and} \quad u \in \calY_s^k(\sigma).
\end{equation*}

Lemmas \ref{lem: concentration of intracommunity degrees} and \ref{lem: concentration of signed degrees} of Appendix \ref{app: concentration inequalities} bound the probabilities of $A_n^k(u)$ and $B_t^{k, l}(\sigma, u)$. In the latter case, the bound only holds when $(l, t) \neq (k, s)$ since it relies on the sets $\hat{Y}_t^l(\sigma, u)$ being independent over $u \in \calY_s^k(\sigma)$. However, we can control the sets $\hat{Y}_s^k(\sigma, u)$ by considering the event $A_n^k(u)$ in combination with the events $B_t^{k, l}(\sigma, u)$, as done below.

Suppose that $\sigma \in \Sigma_n$ and $u \in \calY_s^k(\sigma)$. If we let $\bars \defeq -s$, then
\begin{align*}
\sum_{v \in \calN_n(u)} \sigma(v) &= \hat{Y}_+^k(\sigma, u) - \hat{Y}_-^k(\sigma, u) + \hat{Y}_+^\bark(\sigma, u) - \hat{Y}_-^\bark(\sigma, u) \\
&= s\left[\hat{Y}_+^k(\sigma, u) + \hat{Y}_-^k(\sigma, u)\right] - 2s\hat{Y}_\bars^k(\sigma, u) + \hat{Y}_+^\bark(\sigma, u) - \hat{Y}_-^\bark(\sigma, u) \\
&= sN_n^k(u) - 2s\hat{Y}_\bars^k(\sigma, u) + \hat{Y}_+^\bark(\sigma, u) - \hat{Y}_-^\bark(\sigma, u).
\end{align*}
Similarly, we have $Y_+^k(\sigma) - Y_-^k(\sigma) = sV_n^k - 2sY_\bars^k(\sigma)$. Therefore, it follows from the above identity that $n \geq n_0$ and $\left[C_s^k(\sigma, u)\right]^c$ imply that
\begin{align*}
\left|\sum_{v \in \calN_n(u)} \sigma(v) - \sum_{l = 1, 2} p_n(k, l)\left[Y_+^l(\sigma) - Y_-^l(\sigma)\right]\right| &\leq \left|N_n^k(u) - a_nV_n^k\right| + 2\left|\hat{Y}_\bars^k(\sigma, u) - a_nY_\bars^k(\sigma)\right| \\
&+ \left|\hat{Y}_+^\bark(\sigma, u) - b_nY_+^\bark(\sigma)\right| + \left|\hat{Y}_-^\bark(\sigma, u) - b_nY_-^\bark(\sigma)\right| \\
&\leq d\sqrt{\lambda_n\delta_n}, 
\end{align*}
where the last inequality follows from the definition of $C_s^k(\sigma, u)$ and the observation that $Y_t^l(\sigma) \leq V_n^l$. Also, it is easy to check that the second term on the left-hand side is at most at distance $c\lambda_n$ from $\lambda_n\left[a v^k z^k(\sigma) + b v^\bark z^\bark(\sigma)\right]$. Thus, $n \geq n_0$ and $\left[C_s^k(\sigma, u)\right]^c$ imply that
\begin{equation*}
\left|\sum_{v \in \calN_n(u)} \sigma(v) - \lambda_n\left[a v^k z^k(\sigma) + b v^\bark z^\bark(\sigma)\right]\right| \leq c\lambda_n + d\sqrt{\lambda_n \delta_n}.
\end{equation*}

Let us fix $n_1 \geq n_0$ such that
\begin{equation*}
\psi_n \defeq 2c\lambda_n + d\sqrt{\lambda_n \delta_n} + \alpha_n < \zeta \lambda_n \quad \text{for all} \quad n \geq n_1.
\end{equation*}
If $\sigma \in \Sigma_n(\zeta, \xi_n)$, then $|\calL^k(z(\sigma))| \geq \zeta$. Hence, by \eqref{eq: distance between delta and l}, $\Delta_n(\sigma, u)$ and $2\lambda_n \sigma(u)\calL^k\left(z(\sigma)\right)$ have the same sign if $C_s^k(\sigma, u)$ does not hold, $u \in \calY_s^k(\sigma)$ and $n \geq n_1$. Then \eqref{eq: bound for variation in r} yields
\begin{equation*}
\left|r\left(\beta_n\Delta_n(\sigma, u)\right) - r\left(2\beta_n\lambda_n\sigma(u)\calL^k\left(z(\sigma)\right)\right)\right| \leq \e^{-2\beta_n \left(\zeta\lambda_n - \psi_n\right)} + \ind{C_s^k(\sigma, u)}
\end{equation*}
for all $\sigma \in \Sigma_n(\zeta, \xi_n)$, $u \in \calY_s^k(\sigma)$ and $n \geq n_1$. Thus, $\sigma \in \Sigma_n(\zeta, \xi_n)$ and $n \geq n_1$ imply that
\begin{align*}
\gamma_n \calE^k(\sigma) &\leq 2\gamma_n \e^{-2\beta_n \left(\zeta\lambda_n - \psi_n\right)} + \frac{2\gamma_n}{V_n^k} \sum_{s = -1, 1} \sum_{u \in \calY_s^k(\sigma)} \ind{C_s^k(\sigma, u)} \\
&\leq 2\gamma_n \e^{-2\beta_n \left(\zeta\lambda_n - \psi_n\right)} + \frac{2\gamma_n}{V_n^k} \sum_{u \in \calV_n^k} \ind{A_n^k(u)} + \frac{2\gamma_n}{V_n^k} \sum_{s = -1, 1} \sum_{(l, t) \neq (k, s)} \sum_{u \in \calY_s^k(\sigma)} \ind{B_t^{k, l}(\sigma, u)}.
\end{align*}
It follows from $2c < \zeta$ and \eqref{eq: conditions on gamma and delta} that the first term on the right vanishes as $n \to \infty$. The proof is completed by invoking Lemmas \ref{lem: concentration of intracommunity degrees} and \ref{lem: concentration of signed degrees} from Appendix \ref{app: concentration inequalities}, which hold by \eqref{eq: conditions on gamma and delta} and imply that the last two terms on the right-hand side converge weakly to zero as $n \to \infty$.
\end{proof}

\section{Numerical experiments}
\label{sec: simulations}

In this section we discuss several numerical experiments.\footnote{Code available at \url{https://github.com/diegogolds/ising_clustering}.} First, we evaluate how the hyperparameters $\alpha$ and $\beta_n$ may affect the classification error of Algorithm \ref{alg: algorithm}. Next, we compare Algorithm \ref{alg: algorithm} with Belief Propagation. Finally, we compare our algorithm with several other semi-supervised community detection algorithms.

\subsection{Parameter selection}
\label{sub: parameter selection}

Table \ref{tab: parameter evaluation} shows the mean relative classification error (the fraction of correct spins) of our algorithm in logarithmic-degree regimes for different values of $\alpha$ and $\beta_n$; for the experiments here and below the parameters of the graph are given in the table captions. The relative errors were roughly $0\%$, $42.9\%$, $57.1\%$, and $100\%$ in a few experiments; the second and third values correspond to the monochromatic outcomes while the last value corresponds to labeling each node with the opposite community. The averages shown in Table \ref{tab: parameter evaluation} depend on how many times each of the above relative errors was observed over all the experiments.

We observe that the algorithm performs much better for $\alpha = 6$ than $\alpha = 0$. In particular, the mean relative error with $\alpha = 6$ is negligible for all $\eta \geq 0.02$, whereas the algorithm struggles with small values of $\eta$ when $\alpha = 0$, especially in the more challenging regime with $a = 7$. The value of $\beta_n$ does not affect the mean relative error significantly. However, it makes the algorithm stop in fewer iterations because eventually the flip rate of every spin is equal to zero. The average number of iterations observed when $\beta_n = \infty$ ranges roughly between $8.3 \times 10^3$ and $1.2 \times 10^4$ 
iterations across all regimes considered, which is significantly less than the $5 \times 10^4$ iterations in the cases where $\beta_n = 1$.

\renewcommand{\tabcolsep}{3.5pt}
\begin{table}
	\centering
	\renewcommand{\arraystretch}{0.7}
	{\footnotesize
		\begin{tabular}{c *{16}{c}}
			\toprule
			& \multicolumn{8}{c}{$a = 7$ and $b = 1$} & \multicolumn{8}{c}{$a = 10$ and $b = 1$} \\
			\cmidrule(lr){2-9} \cmidrule(l){10-17}
			& \multicolumn{4}{c}{$\alpha = 0$} & \multicolumn{4}{c}{$\alpha = 6$} & \multicolumn{4}{c}{$\alpha = 0$} & \multicolumn{4}{c}{$\alpha = 6$} \\
			\cmidrule(lr){2-5} \cmidrule(lr){6-9} \cmidrule(lr){10-13} \cmidrule(l){14-17}
			& \multicolumn{2}{c}{$\beta_n = 1$} & \multicolumn{2}{c}{$\beta_n = \infty$} & \multicolumn{2}{c}{$\beta_n = 1$} & \multicolumn{2}{c}{$\beta_n = \infty$} & \multicolumn{2}{c}{$\beta_n = 1$} & \multicolumn{2}{c}{$\beta_n = \infty$} & \multicolumn{2}{c}{$\beta_n = 1$} & \multicolumn{2}{c}{$\beta_n = \infty$} \\
			\cmidrule(lr){2-3} \cmidrule(lr){4-5} \cmidrule(lr){6-7} \cmidrule(lr){8-9} \cmidrule(lr){10-11} \cmidrule(lr){12-13} \cmidrule(lr){14-15} \cmidrule(l){16-17}
			$\eta$ & $\mu$ & $\sigma$ & $\mu$ & $\sigma$ & $\mu$ & $\sigma$ & $\mu$ & $\sigma$ & $\mu$ & $\sigma$ & $\mu$ & $\sigma$ & $\mu$ & $\sigma$ & $\mu$ & $\sigma$ \\
			\midrule
			0.01 & 31.4 & 21.0 & 32.9 & 22.2 & 10.0 & 30.0 & 10.0 & 30.0 & 21.4 & 26.5 & 21.1 & 22.5 & 30.0 & 45.8 & 10.0 & 30.0 \\
			0.02 & 25.7 & 21.0 & 37.1 & 19.4 & 0.00 & 0.00 & 0.00 & 0.00 & 8.57 & 17.1 & 12.9 & 19.6 & 0.00 & 0.00 & 0.00 & 0.00 \\
			0.03 & 25.7 & 21.0 & 25.7 & 21.0 & 0.00 & 0.00 & 0.00 & 0.00 & 4.29 & 12.9 & 8.57 & 17.1 & 0.00 & 0.00 & 0.00 & 0.00 \\
			0.04 & 18.6 & 23.1 & 17.1 & 21.0 & 0.00 & 0.00 & 0.00 & 0.00 & 0.00 & 0.00 & 4.29 & 12.9 & 0.00 & 0.00 & 0.00 & 0.00 \\
			0.05 & 17.1 & 21.0 & 17.1 & 21.0 & 0.00 & 0.00 & 0.00 & 0.00 & 0.00 & 0.00 & 0.00 & 0.00 & 0.00 & 0.00 & 0.00 & 0.00 \\
			0.06 & 4.29 & 12.9 & 8.57 & 17.1 & 0.00 & 0.00 & 0.00 & 0.00 & 0.00 & 0.00 & 0.00 & 0.00 & 0.00 & 0.00 & 0.00 & 0.00 \\
			0.07 & 0.00 & 0.00 & 0.00 & 0.00 & 0.00 & 0.00 & 0.00 & 0.00 & 0.00 & 0.00 & 0.00 & 0.00 & 0.00 & 0.00 & 0.00 & 0.00 \\
			\bottomrule
		\end{tabular}
	}
	\caption{Percentage classification error when $V_n^1 = 10000$, $V_n^2 = 7500$ and $n = 10000$. The headers $\mu$ and $\sigma$ refer to the mean and standard deviation over $10$ experiments, respectively. In all the experiments Algorithm \ref{alg: algorithm} is run for at most $5 \times 10^4$ iterations in total; when $\beta_n = \infty$ the algorithm may stop sooner.}
	\label{tab: parameter evaluation}
\end{table}

The effect of the hyperparameter $\alpha$ on performance can be understood intuitively by considering the mean-field approximation in \eqref{eq: stochastic equation version 3}. The dynamics of $\bz_n$ are mostly governed by the terms $\tanh(\beta_n \lambda_n\calL^k(\bz_n))$, which can be approximated by $\sign(\calL^k(\bz_n))$ when $\beta_n \lambda_n$ is large. The latter terms drive $\bz_n$ in the direction indicated by the diagrams of Figure \ref{fig: sign plot}. If $\alpha$ satisfies \eqref{eq: zero condition for mean-field limit}, then this direction points to $(1, -1)$ in a half-cone that contains $(\eta, -\eta)$, but $\bz_n$ is driven towards $(1, 1)$ or $(-1, -1)$ in the adjacent regions if $\alpha  < (a + b) / 2$. The random error terms in \eqref{eq: stochastic equation version 3} can push $\bz_n$ outside the half-cone where the mean-field drift points in the right direction, particularly if $\eta$ is small. Nevertheless, the mean-field drift points into the half-cone near its boundary when $\alpha > (a + b) / 2$. Therefore, setting $\alpha > (a + b) / 2$ makes the algorithm more robust against the random error terms in \eqref{eq: stochastic equation version 3} and results in small classification errors for a broader range of values of $\eta$. Observe that, in practice, $\alpha$ can be selected so that $\alpha > (a + b) / 2$ using Remark \ref{rem: selection of alpha}.

\begin{figure}
\centering
\begin{subfigure}{0.24\columnwidth}
\centering
\includegraphics{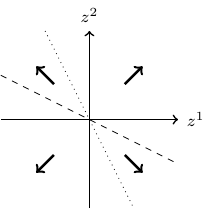}
\caption{$\alpha < b$}
\end{subfigure}
\hfill
\begin{subfigure}{0.24\columnwidth}
\centering
\includegraphics{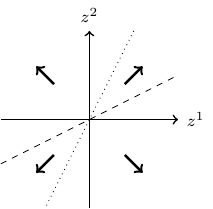}
\caption{$b < \alpha < (a + b) / 2$}
\end{subfigure}
\hfill
\begin{subfigure}{0.24\columnwidth}
\centering
\includegraphics{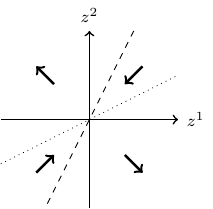}
\caption{$(a + b) / 2 < \alpha < a$}
\end{subfigure}
\hfill
\begin{subfigure}{0.24\columnwidth}
\centering
\includegraphics{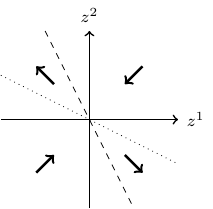}
\caption{$a < \alpha$}
\end{subfigure}
\caption{The arrows show the direction of the vector $(\sign(\calL^1(z)), \sign(\calL^2(z)))$ when \eqref{eq: zero condition for mean-field limit} holds. The dotted and dashed lines are $\calL^1(z) = 0$ and $\calL^2(z) = 0$, respectively.}
\label{fig: sign plot}
\end{figure}

\subsection{Comparison with Belief Propagation}
\label{sub: comparison with belief propagation}

The Belief Propagation (BP) algorithm is one of the main clustering algorithms for the SBM; see \cite{decelle2011asymptotic,mossel2015reconstruction,mossel2018proof}. It has been established that this algorithm achieves almost exact recovery with a complexity that is quasi-linear in the number of nodes. Even though the complexity of Algorithm \ref{alg: algorithm} is of the same order of magnitude, we show in this section that Algorithm \ref{alg: algorithm} can be substantially faster in reaching the same level of accuracy.

In order to compare both algorithms fairly, we considered the semi-supervised version of BP studied in~\cite{zhang2014phase}. Specifically, we ran the \textsc{BP-inference} algorithm described in \cite{decelle2011asymptotic} providing the exact parameters of the SBM as inputs and fixing the messages sent by nodes whose labels have been disclosed. This semi-supervised version of BP is significantly faster than the unsupervised version, which must run \textsc{BP-inference} multiple times in order to estimate the SBM parameters; see \textsc{BP-learning} in \cite{decelle2011asymptotic}. In addition, note that estimating these parameters introduces additional error.

For the comparison with BP, we considered the discrete-time version of Algorithm \ref{alg: algorithm} described in Remark \ref{rem: discrete-time glauber dynamics}, with infinite inverse temperature. To execute this algorithm, we must keep track of the spin of each node, the total magnetization and the magnetization of each neighborhood. Each iteration of the algorithm selects a node uniformly at random and updates its spin if this reduces the energy. In that case, the total magnetization and the magnetization of neighborhoods containing the spin that flipped must be updated by adding or subtracting $2$, depending on the sign of the flip. On the other hand, the semi-supervised version of BP keeps track of one variable for each directed edge and two variables for each node; the former variables are the messages that the nodes exchange. Each iteration of BP updates all variables for nodes whose labels were not disclosed.

The updates carried out by BP are considerably more costly than those performed by Algorithm~\ref{alg: algorithm}, as they involve products of sums of messages over neighborhoods; see \cite[Equations (26)-(28)]{decelle2011asymptotic}. Therefore, the mean number of updates that each algorithm performs until it reaches a given target accuracy is a complexity metric that favors BP. Nonetheless, we observe in Table \ref{tab: comparison with belief propagation} that Algorithm \ref{alg: algorithm} is roughly between $5$ and $7$ times faster with respect to this metric. Note that this metric has the advantage of not depending on the computer code implementation, which can affect the running time in seconds.

\renewcommand{\tabcolsep}{6pt}
\begin{table}
	\centering
	\renewcommand{\arraystretch}{0.7}
	{\footnotesize
		\begin{tabular}{c ccccc ccccc}
			\toprule
			& \multicolumn{5}{c}{$n = 50\ 000$ / Target error (\%)} & \multicolumn{5}{c}{$n = 500\ 000$ / Target error (\%)} \\
			\cmidrule(l){2-6}\cmidrule(l){7-11}
			$\eta$ & 0.4 & 0.8 & 1.2 & 1.6 & 2.0 & 0.4 & 0.8 & 1.2 & 1.6 & 2.0 \\
			\cmidrule(l){1-1}\cmidrule(l){2-6}\cmidrule(l){7-11}
			0.02 & 5.82 & 5.75 & 5.83 & 5.96 & 5.99 & 6.27 & 6.38 & 6.45 & 6.56 & 6.59 \\
			0.04 & 6.06 & 5.29 & 5.27 & 5.39 & 5.38 & 5.53 & 5.66 & 5.72 & 5.81 & 5.87\\
			0.06 & 5.21 & 5.37 & 5.45 & 5.52 & 5.59 & 5.71 & 5.83 & 5.92 & 5.99 & 6.06\\
			\bottomrule
		\end{tabular}
	}
	\caption{Ratio $\mathrm{\textsc{ScoreBP}} / \mathrm{\textsc{ScoreIS}}$ when $V_n^1 = V_n^2 = n$, $\lambda_n = \log n$, $a = 3$ and $b = 1$. For each value of $\eta$ and target error, the scores are averaged over $10$ experiments. Algorithm \ref{alg: algorithm} uses $\alpha = 10$ and $\beta_n = \infty$.}
	\label{tab: comparison with belief propagation}
\end{table}

More precisely, Table \ref{tab: comparison with belief propagation} considers the following metric or score for Algorithm \ref{alg: algorithm}:
\begin{equation}
	\label{eq: score ising}
	\mathrm{\textsc{ScoreIS}} = \mathrm{\textsc{NoFlipIS}} + \left[3 + (a + b) \lambda_n\right] \mathrm{\textsc{FlipIS}},
\end{equation}
where \textsc{FlipIS} and \textsc{NoFlipIS} are the numbers of iterations with and without a spin flip, respectively. In each iteration we must compute the energy variation for flipping the selected node and check if it is negative; this is a simple computation involving the total magnetization and the spin and neighborhood magnetization of the node. In order to account for this operation, we include the term \textsc{NoFlipIS} in the above score. In the case of a spin flip, we must update one spin, the total magnetization and the magnetization of the neighborhoods that contain the spin, i.e., $2 + (a + b) \lambda_n$ updates on average. Considering also that we must check that the energy decreases, this explains the second term in \eqref{eq: score ising}.

On the other hand, the score used for BP is defined as:
\begin{equation*}
	\mathrm{\textsc{ScoreBP}} = \left[2 (1 - \eta)^2 (a + b) n \lambda_n + 2 (1 - \eta) n\right] \mathrm{\textsc{IterBP}},
\end{equation*}
where \textsc{IterBP} is the number of iterations until the target accuracy is reached. Each iteration involves two updates per directed edge and two updates per node, excluding the nodes with disclosed labels. On average, this amounts to $2 (1 - \eta)^2 (a + b) n \lambda_n + 2 (1 - \eta) n$ variables being updated on each iteration of the algorithm.

Table \ref{tab: comparison with belief propagation} summarizes a large number of experiments comparing Algorithm \ref{alg: algorithm} and BP in a logarithmic regime. For each value of $\eta$ and target accuracy, we ran each algorithm $10$ times, stopping when the target accuracy was reached, and then computed the average score over these experiments. The ratios of these averages, shown in the table, indicate that Algorithm \ref{alg: algorithm} is at least between $5$ and $7$ times faster. Figure \ref{fig: comparison with bp} plots the classification errors of Algorithm \ref{alg: algorithm} and BP as a function of the score.

\begin{figure}
	\centering
	\includegraphics{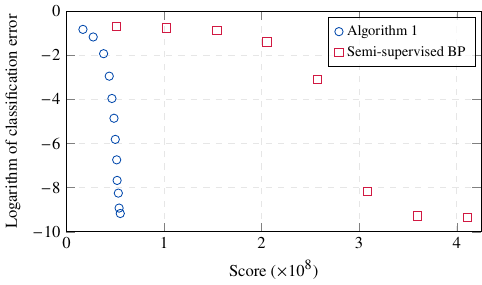}
	\caption{Experiments with $V_n^1 = V_n^2 = n = 500\ 000$, $\lambda_n = \log n$, $a = 3$, $b = 1$ and $\eta = 0.02$. The results for Algorithm \ref{alg: algorithm} correspond to $k 10^6$ iterations with $k \in \{1, \dots, 12\}$; each iteration updates at most one spin. The results for BP correspond to $k$ iterations with $k \in \{1, \dots, 8\}$; each iteration updates all variables. In both cases, the lowest classification error is roughly $0.01\%$.}
	\label{fig: comparison with bp}
\end{figure}

\subsection{Comparison with other semi-supervised methods}
\label{sub: performance evaluation}

Table \ref{tab: comparison with other algorithms} compares the performance of Algorithm \ref{alg: algorithm} with that of several semi-supervised clustering algorithms discussed in Section \ref{sec: introduction}, which we briefly describe below.
\begin{itemize}
	\item \emph{Asynchronous and Synchronous Consensus Dynamics.} The algorithm is initialized by assigning values $-1$ and $1$ to the nodes with revealed community labels, according to their respective labels, whereas the initial value is $0$ for all the other nodes. The nodes with unknown community labels iteratively replace their value by the average value across the neighboring nodes, and after multiple iterations, the community labels are estimated by looking at the sign of the value associated with each node. In the asynchronous version, the node that updates its value is selected uniformly at random, whereas all nodes are updated simultaneously in the synchronous version.
	
	\item \emph{Gossiping Algorithm.} The algorithm is initialized as above. On each iteration, an edge is selected uniformly at random and the average of the values in the endpoints of the edge is computed. If the community of a node in one endpoint of the edge was not revealed at the beginning of the algorithm, then the value of the node is replaced by the average mentioned earlier. After multiple iterations of the algorithm, the community labels are estimated using the signs of the values assigned to the nodes. As noted in Section \ref{sec: introduction}, this algorithm has been analyzed in \cite{xing2023almost}.
	
	\item \emph{Generalized Laplacian Methods.} Let $X(t)$ and $Y$ be $V_n \times 2$ matrices, such that $X(0)$ is full of zeros and the entries of $Y$ are zeros and ones indicating the communities of the nodes with revealed labels. On each iteration, these algorithms perform the update $X(t + 1) = \gamma D^{-\delta} A D^{\delta - 1} X(t) + (1 - \gamma) Y$, where $A$ is the adjacency matrix and $D$ is the diagonal matrix of degrees. In our experiments we set $\gamma = 0.95$ and $\delta \in \{0, 0.5, 1\}$, which correspond to PageRank, the Normalized Laplacian and the Standard Laplacian Methods, respectively. In \cite{avrachenkov2012generalized} these methods are shown to solve a suitable regularized optimization problem and have probabilistic interpretation in terms of random walks. After several iterations, the community labels are estimated by looking at the maximum value on each row of $X(t)$. 
	
	\item \emph{Poisson Learning.} The algorithm solves a Poisson equation that involves the graph Laplacian and has a right-hand side defined using the revealed community labels. The solution of the equation is estimated through an iterative procedure similar to that described above; see \cite[Algorithm 1]{calder2020poisson} for details.
\end{itemize}

For each row of Table \ref{tab: comparison with other algorithms}, we ran all algorithms $10$ times using the same graph for all algorithms on each run. Each algorithm was run for a number of iterations such that each node could be updated $20$ times on average. For Algorithm \ref{alg: algorithm} and the Asynchronous Consensus Dynamics, the maximum number of iterations was $20 V_n = 2 \times 10^5$ since a single node is updated on each iteration. However, Algorithm \ref{alg: algorithm} typically stopped much sooner, with the average number of spin flips ranging between $6548$ for $\eta = 0.1$ and $10445$ for $\eta = 0.01$, i.e., $0.65$ and $1.04$ flips per node, respectively. For the Synchronous Consensus Dynamics, the Generalized Laplacian Methods and Poisson Learning, we set the number of iterations to $20$ since all nodes are updated on each iteration. For Gossiping, the number of iterations was $20(a + b)V_n \lambda_n / 2 \simeq 3.4 \times 10^6$, since each node needs as many iterations as its degree to be paired with each neighbor and two nodes are updated on each iteration.

\renewcommand{\arraystretch}{0.5}
\begin{table}
	\centering
	\footnotesize
	
	\begin{tabular}{
			@{}
			l
			@{\hspace{10pt}}  
			c
			*{10}{@{\hspace{8pt}}l} 
			@{}
		}
		\toprule
		& & \multicolumn{10}{c}{$\eta$} \\
		\cmidrule(lr){3-12}
		& & {0.01} & {0.02} & {0.03} & {0.04} & {0.05} & {0.06} & {0.07} & {0.08} & {0.09} & {0.10} \\
		\midrule
		
		\multirow{2}{50mm}{Algorithm \ref{alg: algorithm} ($\alpha = 10$, $\beta_n = \infty$)} & $\mu$ & \bfseries 10.1 & \bfseries 0.14 & \bfseries 0.11 & \bfseries 0.13 & \bfseries 0.11 & \bfseries 0.12 & \bfseries 0.12 & \bfseries 0.12 & \bfseries 0.12 & \bfseries 0.12 \\
		& $\sigma$ & 29.9 & 0.03 & 0.03 & 0.05 & 0.03 & 0.04 & 0.04 & 0.02 & 0.03 & 0.04 \\
		\midrule
		
		\multirow{2}{*}{Asynchronous Consensus} & $\mu$ & 15.8 & 12.6 & 12.7 & 11.3 & 10.0 & 8.18 & 6.91 & 5.77 & 4.95 & 4.42 \\
		& $\sigma$ & 4.54 & 1.23 & 0.85 & 0.19 & 0.81 & 0.31 & 0.28 & 0.20 & 0.35 & 0.32 \\
		\midrule
		
		\multirow{2}{*}{Synchronous Consensus} & $\mu$ & 15.8 & 12.6 & 12.7 & 11.3 & 10.1 & 8.18 & 6.91 & 5.77 & 4.95 & 4.42 \\
		& $\sigma$ & 4.40 & 1.26 & 0.83 & 0.19 & 0.82 & 0.32 & 0.29 & 0.20 & 0.36 & 0.32 \\
		\midrule
		
		\multirow{2}{*}{Gossiping Algorithm} & $\mu$ & 44.4 & 39.5 & 36.4 & 34.6 & 33.5 & 32.0 & 30.9 & 29.9 & 29.1 & 28.6 \\
		& $\sigma$ & 1.84 & 1.69 & 0.69 & 0.57 & 0.64 & 0.32 & 0.72 & 0.42 & 0.46 & 0.86 \\
		\midrule
		
		\multirow{2}{*}{Standard Laplacian} & $\mu$ & 13.0 & 12.6 & 12.9 & 12.1 & 10.4 & 8.98 & 7.52 & 6.35 & 5.48 & 4.88 \\
		& $\sigma$ & 2.16 & 1.22 & 1.15 & 0.41 & 0.40 & 0.30 & 0.25 & 0.34 & 0.29 & 0.24 \\
		\midrule
		
		\multirow{2}{*}{Normalized Laplacian} & $\mu$ & 12.2 & 12.1 & 12.3 & 12.0 & 10.4 & 8.91 & 7.52 & 6.33 & 5.47 & 4.75 \\
		& $\sigma$ & 0.62 & 0.38 & 0.49 & 0.34 & 0.43 & 0.25 & 0.23 & 0.30 & 0.27 & 0.12 \\
		\midrule
		
		\multirow{2}{*}{PageRank Method} & $\mu$ & 12.1 & 12.4 & 12.3 & 12.0 & 10.4 & 8.92 & 7.59 & 6.46 & 5.57 & 4.79 \\
		& $\sigma$ & 0.47 & 0.34 & 0.37 & 0.29 & 0.45 & 0.27 & 0.24 & 0.30 & 0.24 & 0.17 \\
		\midrule
		
		\multirow{2}{*}{Poisson Learning} & $\mu$ & 11.6 & 12.0 & 11.9 & 11.4 & 9.58 & 8.10 & 6.85 & 5.74 & 4.93 & 4.26 \\
		& $\sigma$ & 0.44 & 0.32 & 0.39 & 0.28 & 0.44 & 0.24 & 0.26 & 0.28 & 0.22 & 0.15 \\
		\bottomrule
	\end{tabular}
	\caption{Percentage relative errors for $V_n^1 = V_n^2 = n = 5000$, $\lambda_n = \log n$, $a = 3$ and $b = 1$. The headers $\mu$ and $\sigma$ refer to the mean and standard deviation over $10$ experiments, respectively. For Algorithm \ref{alg: algorithm}, the hyperparameters are $\alpha = 10$ and $\beta_n = \infty$.}
	\label{tab: comparison with other algorithms}
\end{table}

Algorithm \ref{alg: algorithm} performed much better than the other algorithms, especially for $\eta \geq 0.02$, where its mean relative error is almost zero. For $\eta = 0.01$, the relative error of this algorithm was nearly zero in all the experiments except one, where it returned opposite community labels for nearly all the nodes; this explains the high standard deviation in this case. Except for the Gossiping Algorithm, which had by far the worst performance, the other algorithms had very similar mean relative errors for all values of $\eta \geq 0.02$, which in the best case were more than $4\%$ above the mean relative error obtained by Algorithm \ref{alg: algorithm}.

\section{Open problems}
\label{sec: conclusion}

We would like to conclude by mentioning a couple of open problems that seem relevant from a practical perspective and at the same time mathematically challenging.
\begin{enumerate}
	\item Our theoretical results prove that Algorithm \ref{alg: algorithm} is asymptotically effective provided that the hyperparameter $\alpha$ lies in the interval given by \eqref{eq: zero condition for mean-field limit}. However, our numerical results show that, for graphs of a given size, performance improves if $\alpha$ is selected within a suitable subinterval, as explained in Figure \ref{fig: sign plot}. Characterizing this behavior rigorously seems difficult and may require a nonasymptotic approach, different from the one adopted in the present paper.
	
	\item Our numerical results suggest that Algorithm \ref{alg: algorithm} could be effective in the unsupervised case as well, i.e., if it is initialized by sampling all the spins uniformly at random. In this case $\bz_n(0)$ vanishes as $n \to \infty$, but our simulations show that $\bz_n$ moves away from zero after some short fluctuations and then continues moving towards a state where the communities have opposite magnetizations. Proving this rigorously seems challenging and may require rescaling $\bz_n$ depending on how close it is to zero at the current time, with the behavior near zero being more similar to a diffusion.
\end{enumerate}

\newpage

\begin{appendices}
	
\section{Table of notation}
\label{app: table of notation}

\begin{center}
	\renewcommand{\arraystretch}{0.9}
	\begin{tabular}{c@{\hspace{10pt}}l@{\hspace{10pt}}}
		\multicolumn{2}{c}{Notation introduced in Section \ref{sec: problem formulation}} \\
		\hline
		$n$ & Scaling parameter, proportional to the number of nodes \\
		$\calV_n$, $V_n$ & Set and number of nodes in the graph, respectively \\
		$\calV_n^k$, $V_n^k$ & Set and number of nodes in community $k$, respectively \\
		$v^k$ & Limit of $V_n^k / n$ as $n \to \infty$ \\
		$\lambda_n$ & Scaling parameter for the average degree, e.g., $\lambda_n = \log n$ \\
		$p_n(k, l)$ & Edge probability between communities $k$ and $l$ \\
		$a_n$, $b_n$ & $p_n(k, k) = a_n = a \lambda_n / n$ and $p_n(k, l) = b_n = b \lambda_n / n$ if $k \neq l$ \\
		\hline
		\multicolumn{2}{c}{Notation introduced in Section \ref{sub: ising model}} \\
		\hline
		$u \sim v$ & Nodes $u$ and $v$ are connected by an edge \\
		$\Sigma_n$ & Set of configurations $\map{\sigma}{\calV_n}{\{-1, 1\}}$ \\
		$H_n(\sigma)$ & Hamiltonian associated with configuration $\sigma \in \Sigma_n$ \\
		$\alpha_n$ & Quadratic penalty factor $\alpha_n = \alpha \lambda_n / n$ in $H_n$ \\
		$\beta_n$ & Inverse temperature for Glauber dynamics \\
		\hline
		\multicolumn{2}{c}{Notation introduced in Section \ref{sub: glauber dynamics}} \\
		\hline
		$\Delta_n(\sigma, u)$ & Variation of the Hamiltonian $H_n(\sigma)$ when the spin of node $u$ flips \\
		$r\left(\beta_n \Delta_n(\sigma, u)\right)$ & Rate at which spin of node $u$ flips in configuration $\sigma$ for Glauber dynamics \\
		$\bsigma_n(t)$ & Configuration at time $t$ in continuous-time Glauber dynamics \\
		\hline
		\multicolumn{2}{c}{Notation introduced in Section \ref{sub: semi-supervised learning algorithm}} \\
		\hline
		$\eta$ & Fraction of nodes with disclosed community labels \\
		$t_{\mathrm{end}}$ & Running time of Glauber dynamics for a given target accuracy \\
		\hline
		\multicolumn{2}{c}{Notation introduced in Section \ref{sec: main results}} \\
		\hline
		$\calN_n(u)$, $N_n(u)$ & Set and number of nodes in neighborhood of node $u$ \\
		$\calN_n^k(u)$, $N_n^k(u)$ & Set and number of nodes in $\calN_n(u)$ belonging to community $k$ \\
		$\calY_s^k(\sigma)$, $Y_s^k(\sigma)$ & Set and number of nodes $u \in \calV_n^k$ with spin $\sigma(u) = s$ \\
		$\hat{\calY}_s^k(\sigma, u)$, $\hat{Y}_s^k(\sigma, u)$ & Set and number of nodes $v \in \calN_n^k(u)$ with spin $\sigma(v) = s$ \\
		$z^k(\sigma)$ & Normalized magnetization of community $k$ for configuration $\sigma$ \\
		$\bz_n^k(t)$ & Normalized magnetization of community $k$ for configuration $\bsigma_n(t)$ \\
		$t_n$ & Running time of Glauber dynamics for almost exact recovery \\
		\hline
		\multicolumn{2}{c}{Notation introduced in Section \ref{sec: mean-field approximation}} \\
		\hline
		$\bark$ & Complementary community index $\bark = 3 - k$, i.e., $\{k, \bark\} = \{1, 2\}$ \\
		$\calM_n^k(t)$ & Martingale error term in the mean-field approximation \\
		$\calE^k(\sigma)$ & Error term in the mean-field approximation due to graph topology \\
		$\calL^k(z)$ & $\calL^k(z) = (a - \alpha) v^k z^k + (b - \alpha) v^\bark z^\bark$, used in mean-field approximation \\
		\hline
		\multicolumn{2}{c}{Notation introduced in Section \ref{sec: mean-field limit}} \\
		\hline
		$\calA(\zeta, \xi)$ & Set $\set{z \in [-1 + \xi, 1 - \xi]^2}{\min \{\calL^1(z), -\calL^2(z)\} \geq \zeta}$ in Figure \ref{fig: attractivity region} \\
		$\Sigma_n(\zeta, \xi)$ & Set of configurations $\sigma$ such that $z(\sigma) \in \calA(\zeta, \xi)$
	\end{tabular}
\end{center}

\section{Tightness result}
\label{app: tightness}

In this section we prove Lemma \ref{lem: tightness of magnetization processes}, which requires the following standard result.

\begin{lemma} 
\label{lem: tightness criterion}
Let $\norm{\scdot}$ be a norm on $\R^2$. For each $T \geq 0$ and $h > 0$, we define the local modulus of continuity of a function $\bx \in D_{\R^2}[0, \infty)$ as
\begin{equation*}
w_T(\bx, h) \defeq \sup\set{\norm{\bx(t) - \bx(s)}}{s, t \in [0, T]\ \text{and}\ |t - s| \leq h}.
\end{equation*}
Consider a sequence of processes $\set{\bx_n}{n \geq 1}$ with sample paths in $D_{\R^2}[0, \infty)$ such that:
\begin{enumerate}
\item[(a)] $\bx_n(t)$ is tight in $\R^2$ for all $t$ in some dense subset of $[0, \infty)$,

\item[(b)] for all $T \geq 0$, we have
\begin{equation*}
\lim_{h \to 0} \limsup_{n \to \infty} E\left[\min\left\{w_T(\bx_n, h), 1\right\}\right] = 0.
\end{equation*}
\end{enumerate}
Then $\set{\bx_n}{n \geq 1}$ is tight in $D_{\R^2}[0, \infty)$ with respect to the topology of uniform convergence over compact sets. Also, every subsequence of $\set{\bx_n}{n \geq 1}$ has a further subsequence that converges weakly in $D_{\R^2}[0, \infty)$ to a process that is continuous with probability one.
\end{lemma}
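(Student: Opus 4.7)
The plan is to prove tightness of $\{\bx_n\}$ in $D_{\R^2}[0, T]$ with the uniform topology for each fixed $T \geq 0$, and then lift this to tightness in $D_{\R^2}[0, \infty)$ with the topology of uniform convergence over compact sets via a standard diagonal argument. The almost sure continuity of subsequential limits will come for free, since the tightness set we exhibit will consist entirely of continuous functions.

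First I would fix $T \geq 0$ and $\varepsilon > 0$, and apply condition (b) together with Markov's inequality to extract a sequence $h_k \downarrow 0$ and an index $N$ such that $\condp{w_T(\bx_n, h_k) > 2^{-k}} < 2^{-k - 1}\varepsilon$ for all $n \geq N$ and all $k \geq 1$; the finitely many $n < N$ can be handled separately by replacing $h_k$ with a smaller value, using that each individual c\`adl\`ag trajectory satisfies $w_T(\bx, h) \to 0$ as $h \to 0$. Simultaneously, fix a finite grid $t_1 < \dots < t_M$ of points from the dense subset in (a), chosen with spacing at most $h_1$, and use tightness in $\R^2$ to find a compact $K_0 \subset \R^2$ with $\condp{\bx_n(t_j) \notin K_0} < \varepsilon / (2M)$ for all $n$ and $j$.

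Next I would introduce
\begin{equation*}
\Gamma_\varepsilon \defeq \bigcap_{j = 1}^M \set{\bx \in D_{\R^2}[0, T]}{\bx(t_j) \in K_0} \cap \bigcap_{k \geq 1} \set{\bx \in D_{\R^2}[0, T]}{w_T(\bx, h_k) \leq 2^{-k}},
\end{equation*}
and show by a union bound that $\condp{\bx_n \in \Gamma_\varepsilon} \geq 1 - \varepsilon$ for all sufficiently large $n$. Any $\bx \in \Gamma_\varepsilon$ has $w_T(\bx, h_k) \to 0$, hence is uniformly continuous on $[0, T]$, and approximating a general $t$ by the nearest grid point yields $\sup_{t \in [0, T]}\|\bx(t)\| \leq \sup_{y \in K_0}\|y\| + 1$. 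Thus $\Gamma_\varepsilon$ is a uniformly bounded and uniformly equicontinuous subset of $C_{\R^2}[0, T]$, and the Arzel\`a--Ascoli theorem yields that it is relatively compact in $C_{\R^2}[0, T]$ under the uniform topology. Since $C_{\R^2}[0, T]$ embeds isometrically into $D_{\R^2}[0, T]$ endowed with the uniform metric, this gives tightness of $\{\bx_n\}$ on $[0, T]$, and any subsequential weak limit must be supported on $C_{\R^2}[0, T]$.

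Finally, I would pass from tightness on each $[0, T]$ to tightness in $D_{\R^2}[0, \infty)$ with the topology of uniform convergence over compact sets by the standard diagonal argument: given any subsequence, extract a further subsequence converging weakly on $[0, T_1]$, then a sub-subsequence converging on $[0, T_2]$ for some $T_2 > T_1$, and so on with $T_\ell \uparrow \infty$, then take the diagonal and use the Kolmogorov consistency of the finite-horizon limits to assemble a single weak limit on $[0, \infty)$. The main obstacle is executing the Arzel\`a--Ascoli step cleanly for c\`adl\`ag inputs: one must verify that a modulus control expressed through $w_T$ (which is the genuine oscillation, not the weaker Skorokhod modulus $w_T'$) truly forces every $\bx \in \Gamma_\varepsilon$ to be continuous, so that $\Gamma_\varepsilon$ sits inside the nicely behaved space $C_{\R^2}[0, T]$ rather than just inside $D_{\R^2}[0, T]$. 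Once this is in place, the remaining bookkeeping is routine.
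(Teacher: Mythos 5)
Your proposal takes a genuinely different route from the paper. The paper's own proof is essentially a two-line citation: it introduces the Skorohod (c\`adl\`ag) modulus $\tilde w_T$, observes $\tilde w_T \le w_T$, and invokes Theorems 23.8 and 23.9 of Kallenberg, followed by Prohorov's theorem. You instead reprove the tightness criterion from scratch by constructing an explicit compact set $\Gamma_\varepsilon$ and applying Arzel\`a--Ascoli, then lift to $[0,\infty)$ by a diagonal argument. This is more self-contained and makes the mechanism visible (the modulus control forces trajectories into $C_{\R^2}[0,T]$, so the limit is automatically supported on continuous paths), at the cost of being longer.

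There is, however, a concrete error. To handle the finitely many $n < N$, you claim ``each individual c\`adl\`ag trajectory satisfies $w_T(\bx, h) \to 0$ as $h \to 0$.'' This is false. The local modulus $w_T$ (unlike the c\`adl\`ag modulus $\tilde w_T$) registers jumps: if $\bx$ has a jump of size $\delta$ at some $t_0 \in (0,T]$, then $w_T(\bx, h) \ge \delta - \epsilon$ for every $h > 0$, so $\liminf_{h\to 0} w_T(\bx,h) > 0$. Consequently, for fixed $n$, $E[\min\{w_T(\bx_n,h),1\}]$ does \emph{not} tend to zero as $h \to 0$ unless $\bx_n$ is a.s.\ continuous, which it is not in the intended application (the $\bz_n$ of the paper jump by $2/V_n^k$). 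Condition (b) is a genuinely asymptotic statement in $n$ and cannot be strengthened to hold uniformly over all $n$ by shrinking $h_k$. The practical consequence is that you obtain $P(\bx_n \in \Gamma_\varepsilon) \ge 1 - \varepsilon$ only for $n \ge N(\varepsilon)$, not for all $n$; this suffices for the second conclusion (every subsequence has a further subsequence converging to an a.s.\ continuous limit, since finitely many initial terms do not affect subsequential limits), but the patch you invoke for the first conclusion is invalid as stated. A correct route is to work with $\tilde w_T$ for tightness in the Skorohod topology (where each individual c\`adl\`ag law \emph{is} tight because that space is Polish), then use the $w_T$ control to conclude continuity of the limit, which is essentially what Kallenberg's theorems package together. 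Also, ``Kolmogorov consistency'' is a misnomer in the final diagonal step; what is really used is that the sub-subsequential limits on nested intervals are restrictions of one another, which follows from the diagonal construction itself.
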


\begin{proof}
For each $T \geq 0$ and $h > 0$, the modified modulus of continuity is defined as
\begin{equation*}
\tilde{w}_T(\bx, h) \defeq \inf_\calI \max_{I \in \calI} \sup_{s, t \in I} \norm{\bx(t) - \bx(s)} \quad \text{for all} \quad \bx \in D_{\R^2}[0, \infty),
\end{equation*}
where the infimum extends over all partitions $\calI$ of $[0, T]$ into subintervals $I = [s, t)$ such that $t - s \geq h$ if $t < T$. We observe that
\begin{equation*}
\tilde{w}_T(\bx, h) \leq w_T(\bx, h) \quad \text{for all} \quad \bx \in D_{\R^2}[0, \infty), \quad T \geq 0 \quad \text{and} \quad h > 0.
\end{equation*}
It follows from \cite[Theorems 23.8 and 23.9]{kallenberg1997foundations} that $\set{\bx_n}{n \geq 1}$ is tight in $D_{\R^2}[0, \infty)$ if (a) and (b) hold. By Prohorov's theorem, every subsequence of $\set{\bx_n}{n \geq 1}$ has a further subsequence that converges weakly in $D_{\R^2}[0, \infty)$, and \cite[Theorem 23.9]{kallenberg1997foundations} implies that the limiting process is almost surely continuous.
\end{proof}

We are now ready to prove Lemma \ref{lem: tightness of magnetization processes}.

\begin{proof}[Proof of Lemma \ref{lem: tightness of magnetization processes}]
It is clear that (a) of Lemma \ref{lem: tightness criterion} holds automatically because $\bz_n$ takes values in the compact set $[-1, 1]^2$. Hence, it suffices to check that (b) holds as well.

Note that the jump times of $\bz_n$ can be obtained from a thinning of a Poisson process $\calN_n$ with rate $V_n$ since the spins flip at rate less than one. Recall that $V_n^k / n \to v^k$ as $n \to \infty$. Thus, there exists $c > 0$ such that $V_n^k \geq cn$ for all $k \in \{1, 2\}$ and $n \geq 1$, and we have
\begin{equation*}
\norm{\bz_n(t) - \bz_n\left(t^-\right)}_1 = \left|\bz_n^k(t) - \bz_n^k\left(t^-\right)\right| = \frac{2}{V_n^k} \leq \frac{2}{c n}
\end{equation*}
whenever the spin of a node in community $V_n^k$ flips at time $t$. If we consider the modulus of continuity defined by the norm $\norm{\scdot}_1$, then we obtain
\begin{align*}
E\left[w_T\left(\bz_n, h\right)\right] &\leq 2 E\left[\sup_{t \in [0, T]} \left|\frac{\calN_n(t + h) - \calN_n(t)}{c n}\right|\right] \\
&\leq 2 E\left[\sup_{t \in [0, T]} \left|\frac{\calN_n(t + h) - V_n(t + h)}{c n}\right|\right] + 2 E\left[\sup_{t \in [0, T]} \left|\frac{\calN_n(t) - V_nt}{c n}\right|\right] + \frac{2h V_n}{c n}.
\end{align*}

By Doob's submartingale inequality moment version,
\begin{equation*}
E\left[\sup_{t \in [0, T]} \left|\frac{\calN_n(t) - V_n t}{c n}\right|^2\right] \leq \frac{4 V_n T}{c^2 n^2}.
\end{equation*}
Then it follows from Jensen's inequality that
\begin{equation*}
E\left[w_T\left(\bz_n, h\right)\right] \leq 2 \sqrt{\frac{4 V_n (T + h)}{c^2 n^2}} + 2 \sqrt{\frac{4 V_n T}{c^2 n^2}} + \frac{2 h V_n}{c n}.
\end{equation*}
Note that $V_n / n \to v^1 + v^2$ as $n \to \infty$. Hence, we conclude that property (b) holds:
\begin{equation*}
\lim_{h \to 0} \limsup_{n \to \infty} E\left[\min\{w_T\left(\bz_n, 1\right), 1\}\right] \leq \lim_{h \to 0} \limsup_{n \to \infty} E\left[w_T\left(\bz_n, h\right)\right] = 0,
\end{equation*}
and we complete the proof by invoking Lemma \ref{lem: tightness criterion}.
\end{proof}

\section{Concentration inequalities}
\label{app: concentration inequalities}

The following lemma contains some Chernoff bounds for the binomial distribution, which we will use to prove Lemmas \ref{lem: concentration of intracommunity degrees} and \ref{lem: concentration of signed degrees} below.

\begin{lemma}
\label{lem: concentration inequalities for binomial distribution}
If $X \sim \Bin(n, p)$ is binomially distributed with mean $\mu \defeq n p$, then
\begin{align*}
&\condp*{X \geq \mu + x} \leq \e^{-\mu \varphi\left(\frac{x}{\mu}\right)} \quad \text{for all} \quad x \geq 0, \\
&\condp*{\left|X - \mu\right| \geq x \mu} \leq 2 \e^{-\frac{x^2\mu}{3}} \quad \text{for all} \quad x \in \left(0, 3/2\right],
\end{align*}
where $\varphi(x) \defeq (1 + x)\log(1 + x) - x$ for all $x \geq 0$.
\end{lemma}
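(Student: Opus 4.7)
The plan is to prove both bounds via the standard Chernoff / exponential Markov technique, optimizing the free parameter in the MGF of the binomial, and then deducing the sub-Gaussian-type two-sided bound from a comparison between the rate function $\varphi$ and the quadratic $x^2/3$.

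First, for the upper tail, I would compute the moment generating function $E[\e^{tX}] = (1-p+p\e^t)^n$ for $t > 0$, and then use the elementary inequality $1+y \leq \e^y$ to bound $1-p+p\e^t \leq \e^{p(\e^t-1)}$, giving $E[\e^{tX}] \leq \e^{\mu(\e^t-1)}$. By Markov's inequality applied to $\e^{tX}$,
\begin{equation*}
\condp*{X \geq \mu + x} \leq \e^{-t(\mu+x)} E[\e^{tX}] \leq \exp\bigl(\mu(\e^t-1) - t(\mu+x)\bigr).
\end{equation*}
Minimizing the exponent over $t \geq 0$ yields the stationary point $\e^t = 1 + x/\mu$, which is admissible for all $x \geq 0$. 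Substituting gives exponent $x - (\mu+x)\log(1+x/\mu) = -\mu\varphi(x/\mu)$, which is the claimed bound.

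For the two-sided bound, I would handle the lower tail by the analogous argument with $t < 0$, which after the same optimization yields $\condp*{X \leq \mu - x\mu} \leq \e^{-\mu \psi(x)}$ for $x \in [0,1]$, where $\psi(x) \defeq (1-x)\log(1-x) + x$; note that for $x > 1$ the event is impossible since $X \geq 0$. The key analytic step is then to verify the elementary inequalities
\begin{equation*}
\varphi(x) \geq \tfrac{x^2}{3} \ \text{for all}\ x \in [0, 3/2] \qquad \text{and} \qquad \psi(x) \geq \tfrac{x^2}{2} \geq \tfrac{x^2}{3} \ \text{for all}\ x \in [0, 1].
\end{equation*}
Taking a union bound over the two tails then produces the factor of $2$ in the statement.

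The mildly delicate step is the inequality $\varphi(x) \geq x^2/3$ on $[0, 3/2]$: writing $f(x) \defeq \varphi(x) - x^2/3$, one has $f(0) = f'(0) = 0$ and $f''(x) = 1/(1+x) - 2/3$, so $f''$ changes sign at $x = 1/2$ and $f$ is not monotone on the full interval. I would verify positivity by checking that $f$ is increasing on $[0, 1/2]$ (where $f'' \geq 0$) and then showing $f(3/2) > 0$ by direct evaluation, combined with the fact that $f'$ has a single zero in $(1/2, 3/2)$, so $f$ attains its minimum on $[0, 3/2]$ at an endpoint. The analogous bound $\psi(x) \geq x^2/2$ on $[0,1]$ follows by an easier Taylor-type argument since $\psi''(x) = 1/(1-x) \geq 1$. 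No truly hard step is expected; the proof is largely an accounting of standard Chernoff inequalities.
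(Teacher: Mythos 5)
The paper does not prove this lemma at all; it simply cites \cite[Theorem 2.1 and Corollary 2.3]{janson2011random}, so there is no in-paper argument to compare against. Your self-contained Chernoff derivation is correct and in fact reproduces the standard argument underlying that reference: the MGF bound $E[\e^{tX}] \leq \e^{\mu(\e^t - 1)}$ via $1+y\leq\e^y$, optimization at $\e^t = 1+x/\mu$ (resp.\ $\e^t = 1-x$ for the lower tail) giving the rate functions $\varphi$ and $\psi$, and the elementary comparisons $\varphi(x)\geq x^2/3$ on $[0,3/2]$ and $\psi(x)\geq x^2/2$ on $[0,1]$ combined with a union bound to get the factor $2$. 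Your handling of the only delicate point --- that $f(x)=\varphi(x)-x^2/3$ is not convex on $[0,3/2]$ since $f''$ changes sign at $x=1/2$, so one must argue $f$ increases then decreases and check the endpoint $f(3/2)=\tfrac{5}{2}\log\tfrac{5}{2}-\tfrac{9}{4}>0$ --- is exactly right; a purely local Taylor argument would not suffice past $x=1/2$.
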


\begin{proof}
The inequalities are proved in \cite[Theorem 2.1 and Corollary 2.3]{janson2011random}, respectively.
\end{proof}

The following lemma bounds the fraction of nodes $u \in \calV_n^k$ such that the number of neighbors that $u$ has in $\calV_n^k$ deviates too much from the expected value.

\begin{lemma}
\label{lem: concentration of intracommunity degrees}
Given $\delta_n > 0$, recall that
\begin{equation*}
A_n^k(u) \defeq \left\{\left|N_n^k(u) - a_n V_n^k\right| \geq \sqrt{a_n V_n^k \delta_n}\right\} \quad \text{for all} \quad u \in \calV_n^k.
\end{equation*}
Suppose that the sequences $\gamma_n > 0$ and $\delta_n$ are such that
\begin{equation*}
\lim_{n \to \infty} \delta_n = \infty, \quad \limsup_{n \to \infty} \frac{\delta_n}{\lambda_n} < \infty \quad \text{and} \quad \limsup_{n \to \infty} \frac{\gamma_n}{\delta_n} < \infty.
\end{equation*}
Then the following limit holds:
\begin{equation*}
\frac{\gamma_n}{n} \sum_{u \in \calV_n^k} \ind{A_n^k(u)} \Rightarrow 0 \quad \text{as} \quad n \to \infty \quad \text{for all} \quad k \in \{1, 2\}.
\end{equation*}
\end{lemma}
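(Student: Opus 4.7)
\medskip

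\noindent\emph{Proof proposal.} The plan is to control each event $A_n^k(u)$ via a Chernoff bound for the binomial distribution, then invoke Markov's inequality on the average. For fixed $u \in \calV_n^k$, the neighbor count $N_n^k(u)$ is a sum of $V_n^k - 1$ independent Bernoulli$(a_n)$ random variables (the $V_n^k - 1$ potential edges from $u$ to the other nodes in $\calV_n^k$), so $N_n^k(u) \sim \Bin(V_n^k - 1, a_n)$ with mean $\mu_n^k \defeq a_n(V_n^k - 1)$. Since $|a_n V_n^k - \mu_n^k| = a_n = O(\lambda_n / n) = o(\sqrt{a_n V_n^k \delta_n})$ under the standing assumptions, for all sufficiently large $n$ we have the inclusion
\begin{equation*}
A_n^k(u) \subseteq \left\{\left|N_n^k(u) - \mu_n^k\right| \geq \tfrac{1}{2}\sqrt{a_n V_n^k \delta_n}\right\}.
\end{equation*}

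Next, I would apply the second inequality of Lemma \ref{lem: concentration inequalities for binomial distribution} with $x \defeq \tfrac{1}{2}\sqrt{a_n V_n^k \delta_n}/\mu_n^k$. A short calculation using $a_n V_n^k \sim a v^k \lambda_n$ gives $x^2 \mu_n^k \to \delta_n / 4$ up to a constant factor bounded away from zero, while $x^2 \sim \delta_n / (4 a v^k \lambda_n)$ is bounded thanks to the hypothesis $\limsup_{n \to \infty} \delta_n / \lambda_n < \infty$. Hence $x \leq 3/2$ eventually, and there exist $c > 0$ and $n_0 \geq 1$ such that
\begin{equation*}
\prob\left(A_n^k(u)\right) \leq 2 \e^{- c \delta_n} \quad \text{for all} \quad u \in \calV_n^k \quad \text{and} \quad n \geq n_0.
\end{equation*}

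Finally, I would take expectations, noting that $V_n^k / n \to v^k \in (0, \infty)$, to obtain a constant $C > 0$ such that
\begin{equation*}
\conde*{\frac{\gamma_n}{n} \sum_{u \in \calV_n^k} \ind{A_n^k(u)}} \leq \frac{\gamma_n V_n^k}{n} \cdot 2 \e^{-c \delta_n} \leq C \gamma_n \e^{-c \delta_n} \quad \text{for all} \quad n \geq n_0.
\end{equation*}
Because $\limsup_{n \to \infty} \gamma_n / \delta_n < \infty$ gives $\gamma_n = O(\delta_n)$, and $\delta_n \to \infty$ implies $\delta_n \e^{-c \delta_n} \to 0$, the right-hand side vanishes. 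Convergence in probability then follows immediately from Markov's inequality (in fact, convergence in $L^1$ is obtained, which is strictly stronger).

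\medskip

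\noindent\emph{Main obstacle.} There is no serious obstacle here: the result is essentially a routine combination of a Chernoff-type bound with a first-moment estimate. The only mildly delicate points are (i) replacing the ``wrong'' centering $a_n V_n^k$ by the true binomial mean $a_n(V_n^k - 1)$, and (ii) verifying that the relative deviation $x$ stays in the range $(0, 3/2]$ where the Chernoff bound of Lemma \ref{lem: concentration inequalities for binomial distribution} is applicable, both of which are handled by the standing hypotheses on $\delta_n$ and $\lambda_n$.
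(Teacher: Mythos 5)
Your argument takes a genuinely different and in fact simpler route than the paper's. The paper bounds the tail probability by Markov's inequality applied to the \emph{second} moment $E\bigl[(\gamma_n n^{-1}\sum_u \ind{A_n^k(u)})^2\bigr]$, which forces it to expand the square and handle the dependence between $A_n^k(u)$ and $A_n^k(v)$ for $u\neq v$ by conditioning on whether $u\sim v$ and re-centering the resulting binomials at $a_n(V_n^k-2)$. You instead observe that the limit is deterministic, so convergence in probability suffices, and that can be read off the \emph{first} moment alone: $E\bigl[\gamma_n n^{-1}\sum_u \ind{A_n^k(u)}\bigr]\le C\gamma_n\e^{-c'\delta_n}\to 0$ using only linearity of expectation, no independence or conditioning. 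This is correct, shorter, and even gives $L^1$ convergence. The two proofs ultimately rely on the same Chernoff bound and the same hypotheses, so neither buys a sharper conclusion here; the first-moment argument is simply cleaner.

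One step in your write-up does not quite hold as stated: you argue that $x^2\sim \delta_n/(4av^k\lambda_n)$ is \emph{bounded} by the hypothesis $\limsup_n \delta_n/\lambda_n<\infty$, and then write ``Hence $x\le 3/2$ eventually.'' Boundedness does not imply the bound is $3/2$; if, say, $\delta_n = 100\,av^k\lambda_n$, the hypothesis holds but $x\to 5$, and the second inequality of Lemma~\ref{lem: concentration inequalities for binomial distribution} is then inapplicable. The paper deals with exactly this by inserting a constant $c\in(0,1)$ small enough that $\sqrt{c\,\delta_n/(a_n(V_n^k-i))}\le 3/2$ for large $n$. The same trick fixes your version: replace the threshold $\tfrac12\sqrt{a_nV_n^k\delta_n}$ by $c_0\sqrt{a_nV_n^k\delta_n}$ with $c_0>0$ chosen small relative to $\limsup_n\delta_n/\lambda_n$ and $av^k$; the resulting exponent $c_0^2\delta_n/3$ (times a bounded factor) still diverges, so $P(A_n^k(u))\le 2\e^{-c'\delta_n}$ and the rest of your argument is unchanged. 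With that one adjustment the proof is complete.
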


\begin{proof}
Given $\varepsilon > 0$, Markov's inequality yields
\begin{align*}
P\left(\frac{\gamma_n}{n} \sum_{u \in \calV_n^k} \ind{A_n^k(u)} \geq \varepsilon\right) &\leq \left(\frac{\gamma_n}{\varepsilon n}\right)^2 E\left[\sum_{u, v \in \calV_n^k} \ind{A_n^k(u)} \ind{A_n^k(v)}\right] \\
&= \left(\frac{\gamma_n}{\varepsilon n}\right)^2\left[V_n^k\condp*{A_n^k(u)} + V_n^k\left(V_n^k - 1\right)\condp*{A_n^k(u) \cap A_n^k(v)}\right],
\end{align*}
where the nodes $u$ and $v$ in the last line satisfy $u, v \in \calV_n^k$ and $u \neq v$. Therefore, it suffices to prove that the right-hand side approaches zero as $n \to \infty$ for each $\varepsilon > 0$.

Since $\delta_n / \lambda_n$ is bounded, there exists $c \in (0, 1)$ such that
\begin{equation*}
\sqrt{\frac{c \delta_n}{a_n\left(V_n^k - i\right)}} \in \left(0, 3/2\right] \quad \text{for all} \quad i \in \{1, 2\} \quad \text{and all large enough}\ n.
\end{equation*}
Furthermore, let us consider all the intervals of the form
\begin{equation*}
\left(a_n \left(V_n^k - i\right) + j - \sqrt{a_n \left(V_n^k - i\right) c \delta_n}, a_n \left(V_n^k - i\right) + j + \sqrt{a_n \left(V_n^k - i\right) c \delta_n}\right)
\end{equation*}
with $i \in \{1, 2\}$ and $j \in \{0, 1\}$. These intervals are contained in the interval
\begin{equation*}
\left(a_n V_n^k - \sqrt{a_n V_n^k \delta_n}, a_n V_n^k + \sqrt{a_n V_n^k \delta_n}\right) \quad \text{for all large enough}\ n.
\end{equation*}

Using the above observations, we conclude from Lemma \ref{lem: concentration inequalities for binomial distribution} that
\begin{equation*}
P\left(A_n^k(u)\right) \leq P\left(\left|N_n^k(u) - a_n(V_n^k - 1)\right| \geq \sqrt{a_n (V_n^k - 1) c \delta_n}\right) \leq 2 \e^{-\frac{c \delta_n}{3}}
\end{equation*}
for all sufficiently large $n$, because $N_n^k(u) \sim \Bin\left(V_n^k - 1, a_n\right)$.

In addition, we observe that
\begin{align*}
P\left(A_n^k(u) \cap A_n^k(v)\right) &= \condp*{A_n^k(u) \cap A_n^k(v) | u \nsim v} (1 - a_n) + \condp*{A_n^k(u) \cap A_n^k(v) | u \sim v} a_n \\
&=\left[\condp*{A_n^k(u) | u \nsim v}\right]^2(1 - a_n) + \left[\condp*{A_n^k(u) | u \sim v}\right]^2a_n.
\end{align*}
Applying Lemma \ref{lem: concentration inequalities for binomial distribution} again, we obtain
\begin{align*}
&\condp*{A_n^k(u) | u \nsim v} \leq \condp*{\left|N_n^k(u) - a_n\left(V_n^k - 2\right)\right| \geq \sqrt{a_n \left(V_n^k - 2\right) c \delta_n} | u \nsim v} \leq 2 \e^{-\frac{c \delta_n}{3}}, \\
&\condp*{A_n^k(u) | u \sim v} \leq \condp*{\left|N_n^k(u) - 1 - a_n\left(V_n^k - 2\right)\right| \geq \sqrt{a_n \left(V_n^k - 2\right) c \delta_n} | u \sim v} \leq 2 \e^{-\frac{c \delta_n}{3}},
\end{align*}
for all large enough $n$. In the former case, note that $N_n^k(u) \sim \Bin\left(V_n^k - 2, a_n\right)$ when $u \nsim v$, whereas $N_n^k(u) - 1 \sim \Bin\left(V_n^k - 2, a_n\right)$ in the latter case where $u \sim v$.

Putting the above inequalities together, we get
\begin{equation*}
P\left(\frac{\gamma_n}{n} \sum_{u \in \calV_n^k} \ind{A_n^k(u)} \geq \varepsilon\right) \leq \left(\frac{\gamma_n}{\varepsilon n}\right)^2 \left[2V_n^k\e^{-\frac{c \delta_n}{3}} + 4V_n^k\left(V_n^k - 1\right)\e^{-\frac{2 c \delta_n}{3}}\right]
\end{equation*}
for all sufficiently large $n$. Because $\gamma_n / \delta_n$ is bounded and $\delta_n \to \infty$ as $n \to \infty$, we conclude that the right-hand side vanishes and thus complete the proof.
\end{proof}

The next lemma is similar to Lemma \ref{lem: concentration of intracommunity degrees} in spirit but takes into account the spin of the nodes.

\begin{lemma}
\label{lem: concentration of signed degrees}
Given $\delta_n > 0$, recall that
\begin{equation*}
B_s^{k, l}(\sigma, u) \defeq \left\{\left|\hat{Y}_s^l(\sigma, u) - p_n(k, l) Y_s^l(\sigma)\right| \geq \sqrt{p_n(k, l) Y_s^l(\sigma) \delta_n}\right\}
\end{equation*}
for all $\sigma \in \Sigma_n$ and $u \in \calV_n^k$. Suppose that $\gamma_n, \delta_n > 0$ and $\xi_n \in (0, 1)$ are such that
\begin{equation*}
\lim_{n \to \infty} \delta_n = \infty, \quad \lim_{n \to \infty} \frac{\delta_n}{\xi_n\lambda_n} = 0 \quad \text{and} \quad \lim_{n \to \infty} \frac{\delta_n}{\gamma_n} = \infty.
\end{equation*}
If we fix $\zeta > 0$ and $(k, s) \neq (l, t)$, then we have
\begin{equation*}
\max_{\sigma \in \Sigma_n(\zeta, \xi_n)} \frac{\gamma_n}{n}\sum_{u \in \calY_s^k(\sigma)} \ind{B_t^{k, l}(\sigma, u)} \Rightarrow 0 \quad \text{as} \quad n \to \infty.
\end{equation*}
\end{lemma}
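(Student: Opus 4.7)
The plan is to exploit the key difference from Lemma \ref{lem: concentration of intracommunity degrees}: when $(k,s) \neq (l,t)$, for every $u \in \calY_s^k(\sigma)$ we have $u \notin \calY_t^l(\sigma)$, so the random variables $\{\hat{Y}_t^l(\sigma,u)\}_{u \in \calY_s^k(\sigma)}$ are mutually independent (they depend on disjoint sets of potential edges) and each has the $\Bin(Y_t^l(\sigma), p_n(k,l))$ distribution. This independence allows a Chernoff bound strong enough to survive a union bound over the $2^{V_n}$ configurations in $\Sigma_n$, which is crucial because Markov's inequality (as in Lemma \ref{lem: concentration of intracommunity degrees}) only yields a polynomial tail that the exponential number of configurations would wipe out.

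First I would establish a sharp per-node tail bound. For $\sigma \in \Sigma_n(\zeta, \xi_n)$, the constraint $|z^l(\sigma)| \leq 1 - \xi_n$ forces $Y_t^l(\sigma) \geq V_n^l \xi_n / 2 \geq c_0 n \xi_n$ for some constant $c_0 > 0$, so the mean $\mu_\sigma \defeq p_n(k,l) Y_t^l(\sigma)$ is at least of order $\xi_n \lambda_n$. The assumption $\delta_n / (\xi_n \lambda_n) \to 0$ then makes $\sqrt{\delta_n/\mu_\sigma} \leq 3/2$ for all large $n$, and the second inequality of Lemma \ref{lem: concentration inequalities for binomial distribution} applied with $x = \sqrt{\delta_n / \mu_\sigma}$ gives $P(B_t^{k,l}(\sigma, u)) \leq 2 e^{-\delta_n/3}$, uniformly in $\sigma \in \Sigma_n(\zeta,\xi_n)$ and $u \in \calY_s^k(\sigma)$.

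Second, since the indicators $\ind{B_t^{k,l}(\sigma, u)}$ are independent across $u \in \calY_s^k(\sigma)$, the sum $S_\sigma \defeq \sum_{u \in \calY_s^k(\sigma)} \ind{B_t^{k,l}(\sigma, u)}$ has mean $m_\sigma \leq 2 V_n^k e^{-\delta_n/3}$, and the moment-generating-function argument underlying Lemma \ref{lem: concentration inequalities for binomial distribution} extends verbatim to Poisson-binomial sums, giving
\[P\!\bigl(S_\sigma \geq m_\sigma + x\bigr) \leq \exp\!\bigl(-m_\sigma \varphi(x/m_\sigma)\bigr) \quad \text{for all} \quad x \geq 0.\]
Setting $y \defeq \varepsilon n / \gamma_n$, the ratio $y/m_\sigma$ is at least of order $e^{\delta_n/3}/\gamma_n$, which diverges because $\delta_n/\gamma_n \to \infty$ together with $\delta_n \to \infty$ forces $\log \gamma_n < \log \delta_n = o(\delta_n)$. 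Using the asymptotic $\varphi(z) \sim z \log z$ as $z \to \infty$, a routine computation produces
\[P\!\bigl(S_\sigma \geq \varepsilon n / \gamma_n\bigr) \leq \exp\!\left(-\frac{\varepsilon n \delta_n}{C \gamma_n}\right)\]
for some constant $C > 0$ and all large $n$, uniformly in $\sigma \in \Sigma_n(\zeta, \xi_n)$.

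The conclusion then follows by a union bound over $\sigma \in \Sigma_n$ combined with the linear bound $V_n \leq c_1 n$:
\[P\!\left(\max_{\sigma \in \Sigma_n(\zeta,\xi_n)} \frac{\gamma_n S_\sigma}{n} \geq \varepsilon\right) \leq 2^{V_n} \exp\!\left(-\frac{\varepsilon n \delta_n}{C \gamma_n}\right) \leq \exp\!\left(-n\!\left[\frac{\varepsilon \delta_n}{C \gamma_n} - c_1 \log 2\right]\right),\]
and the bracketed exponent diverges to infinity by $\delta_n / \gamma_n \to \infty$. The main technical obstacle is precisely to verify that the Chernoff exponent scales like $n \delta_n / \gamma_n$ rather than merely $n$; this is what the third hypothesis $\delta_n/\gamma_n \to \infty$ is arranged to secure, and it is also why the lemma is restricted to the case $(k,s) \neq (l,t)$, where independence is available.
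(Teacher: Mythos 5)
Your argument is correct and follows the same approach as the paper: per-node Chernoff bound $2e^{-\delta_n/3}$ uniformly over $\sigma \in \Sigma_n(\zeta,\xi_n)$ using $Y_t^l(\sigma) \geq \xi_n V_n^l/2$, then a Chernoff bound on the sum (exploiting independence when $(k,s)\neq(l,t)$), then a union bound over $2^{V_n}$ configurations driven by $\delta_n/\gamma_n \to \infty$. The only cosmetic difference is that you invoke the MGF bound directly for the Poisson-binomial sum while the paper first stochastically dominates the sum by $\Bin\bigl(Y_s^k(\sigma), 2e^{-\delta_n/3}\bigr)$ and then applies Lemma \ref{lem: concentration inequalities for binomial distribution} as stated; these are equivalent.
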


\begin{proof}
If $\sigma \in \Sigma_n(\zeta, \xi_n)$, then $Y_t^l(\sigma) \geq \xi_n V_n^l / 2$. Hence, for all sufficiently large $n$,
\begin{equation*}
0 < \sqrt{\frac{\delta_n}{p_n(k, l) Y_s^l(\sigma)}} \leq \sqrt{\frac{2\delta_n}{\xi_n p_n(k, l) V_n^l}} \leq \sqrt{\frac{2\delta_n n}{b\xi_n\lambda_nV_n^l}} \leq \frac{3}{2}.
\end{equation*}
Since $\hat{Y}_t^l(\sigma, u) \sim \Bin\left(Y_t^l(\sigma), p_n(k, l)\right)$ for all $u \in \calY_s^k(\sigma)$, Lemma \ref{lem: concentration inequalities for binomial distribution} implies that
\begin{equation*}
\condp*{B_t^{k, l}(\sigma, u)} \leq 2\e^{-\frac{\delta_n}{3}} \quad \text{for all} \quad \sigma \in \Sigma_n(\zeta, \xi_n) \quad u \in \calY_s^k(\sigma) \quad \text{and all large enough}\ n.
\end{equation*}
Furthermore, the random variables $\set{\hat{Y}_t^l(\sigma, u)}{u \in \calY_s^k(\sigma)}$ are independent if $\sigma$ is fixed since $(k, s) \neq (l, t)$. Thus, for all large enough $n$, the next stochastic inequality holds:
\begin{equation*}
\sum_{u \in \calY_s^k(\sigma)} \ind{B_t^{k, l}(\sigma, u)} \leq_{st} \Bin\left(Y_s^k(\sigma), 2\e^{-\frac{\delta_n}{3}}\right) \quad \text{for all} \quad \sigma \in \Sigma_n(\zeta, \xi_n).
\end{equation*}

Fix $\varepsilon > 0$ and let us write $\mu \defeq 2Y_s^k(\sigma)\e^{-\frac{\delta_n}{3}}$ and $x \defeq \varepsilon n / \gamma_n - \mu$ for brevity. It follows from the above stochastic inequality and Lemma \ref{lem: concentration inequalities for binomial distribution} that
\begin{equation*}
\condp*{\frac{\gamma_n}{n}\sum_{u \in \calY_s^k(\sigma)} \ind{B_t^{k, l}(\sigma, u)} \geq \varepsilon} \leq \e^{-\mu\varphi\left(\frac{x}{\mu}\right)} \quad \text{for all} \quad \sigma \in \Sigma_n(\zeta, \xi_n)
\end{equation*}
and all large enough $n$, where $\varphi$ is defined as in the statement of Lemma \ref{lem: concentration inequalities for binomial distribution}. Now note that
\begin{align*}
\mu\varphi\left(\frac{x}{\mu}\right) = \mu \varphi\left(\frac{\varepsilon n}{\mu \gamma_n} - 1\right) &= \frac{\varepsilon n}{\gamma_n} \left[\log\left(\frac{\varepsilon n}{\mu \gamma_n}\right) - 1\right] + \mu \\
&\geq \frac{\varepsilon n}{\gamma_n} \left[\log\left(\frac{\varepsilon n \e^{\frac{\delta_n}{3}}}{2 V_n^k \gamma_n}\right) - 1\right] = \varepsilon n \left[\frac{\delta_n}{3 \gamma_n} -\frac{1}{\gamma_n} \log\left(\frac{2 V_n^k \gamma_n}{\varepsilon n}\right) - \frac{1}{\gamma_n}\right]
\end{align*}
for all $\sigma \in \Sigma_n(\zeta, \xi_n)$, where we used that $Y_s^k(\sigma) \leq V_n^k$. Hence, for all sufficiently large $n$,
\begin{equation*}
\condp*{\max_{\sigma \in \Sigma_n(\zeta, \xi_n)} \frac{\gamma_n}{n} \sum_{u \in \calY_s^k(\sigma)} \ind{B_t^{k, l}(\sigma, u)} \geq \varepsilon} \leq 2^{V_n} \e^{-\varepsilon n \left[\frac{\delta_n}{3\gamma_n} -\frac{1}{\gamma_n} \log\left(\frac{2 V_n^k \gamma_n}{\varepsilon n}\right) - \frac{1}{\gamma_n}\right]},
\end{equation*}
and the right-hand side approaches zero as $n \to \infty$ because $\delta_n / \gamma_n \to \infty$.
\end{proof}

\section{Other intermediate results}
\label{app: other intermediate results}

\begin{proof}[Proof of Proposition \ref{prop: maximum likelihood estimator}]
Let $A$ denote the adjacency matrix of $\calG$. Then
\begin{equation}
p_\calG(\sigma) = \prod_{u < v} \left[a_n^{A_{uv}}(1 - a_n)^{1 - A_{uv}}\ind{\sigma(u) = \sigma(v)} + b_n^{A_{uv}}(1 - b_n)^{1 - A_{uv}}\ind{\sigma(u) \neq \sigma(v)}\right],
\end{equation}
where we have identified $\calV_n$ with $\{1, \dots, V_n\}$. It follows that
\begin{equation}
\label{eq: log-likelihood}
\begin{split}
\log p_\calG(\sigma) &= \sum_{u < v} \left[A_{uv}\log a_n + (1 - A_{uv})\log(1 - a_n)\right] \ind{\sigma(u) = \sigma(v)} \\
&+ \sum_{u < v} \left[A_{uv}\log b_n + (1 - A_{uv})\log(1 - b_n)\right] \ind{\sigma(u) \neq \sigma(v)} \\
&= \frac{1}{2}\sum_{u < v} \left[A_{uv}\log\left(\frac{a_n}{b_n}\right) + (1 - A_{uv})\log\left(\frac{1 - a_n}{1 - b_n}\right)\right]\sigma(u)\sigma(v) \\
&+ \frac{1}{2}\sum_{u < v} \left[A_{uv}\log(a_n b_n) + (1 - A_{uv})\log\left((1 - a_n)(1 - b_n)\right)\right],
\end{split}
\end{equation}
where we used that $\ind{\sigma(u) = \sigma(v)} = [1 + \sigma(u)\sigma(v)] / 2$ and $\ind{\sigma(u) \neq \sigma(v)} = [1 - \sigma(u)\sigma(v)] / 2$.

Note that $p_\calG$ and $\log p_\calG$ have the same maximizers. Moreover, the second term on the right-hand side of \eqref{eq: log-likelihood} does not depend on $\sigma$, so we may focus on maximizing the first term. This term is equal to
\begin{equation*}
\frac{1}{2}\log\left(\frac{b_n(1 - a_n)}{a_n(1 - b_n)}\right)\left[-\sum_{u < v} A_{uv}\sigma(u)\sigma(v) + \rho_n\sum_{u < v} \sigma(u)\sigma(v)\right].
\end{equation*}
Furthermore, observe that
\begin{align*}
&\sum_{u < v} A_{uv}\sigma(u)\sigma(v) = \frac{1}{2}\sum_{u \sim v} \sigma(u)\sigma(v) \quad \text{and} \quad \sum_{u < v} \sigma(u)\sigma(v) = \frac{1}{2} \left[\sum_{u \in \calV_n} \sigma(u)\right]^2 - \frac{1}{2}V_n.
\end{align*}
The second term on the right-hand side of the last equation does not depend on $\sigma$. Hence, we conclude that finding the maximizers of $p_\calG$ is equivalent to finding the maximizers of
\begin{equation*}
\frac{1}{2}\log\left(\frac{b_n(1 - a_n)}{a_n(1 - b_n)}\right)\left[-\frac{1}{2}\sum_{u \sim v} \sigma(u)\sigma(v) + \frac{\rho_n}{2}\left[\sum_{u \in \calV_n} \sigma(u)\right]^2\right].
\end{equation*}
Moreover, the maximizers of the latter expression are the minimizers of
\begin{equation*}
-\frac{1}{2}\sum_{u \sim v} \sigma(u)\sigma(v) + \frac{\rho_n}{2}\left[\sum_{u \in \calV_n} \sigma(u)\right]^2
\end{equation*}
because $a > b > 0$ implies that $b_n(1 - a_n) < a_n(1 - b_n)$.
\end{proof}

The following lemma proves \eqref{seq3: weak limits}.

\begin{lemma}
\label{lem: initial conditions}
If $\gamma_n / \sqrt{n} \to 0$ as $n \to \infty$, then $\gamma_n \left[\bz_n(0) - \bz_\infty(0)\right] \Rightarrow 0$ as $n \to \infty$.
\end{lemma}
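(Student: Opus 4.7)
The plan is to reduce the lemma to a straightforward second-moment estimate, since the initial configuration $\bsigma_n(0)$ is built from independent choices across nodes, and the limiting initial condition $\bz_\infty(0) = (\eta, -\eta)$ coincides exactly with the expectation of $\bz_n(0)$.

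First I would unpack the construction in Algorithm \ref{alg: algorithm}. For each $u \in \calV_n^k$, independently of the others, with probability $\eta$ the oracle reveals the correct label (giving $\bsigma_n(0, u) = +1$ if $k = 1$ and $-1$ if $k = 2$), and with probability $1 - \eta$ the spin is uniform on $\{-1, +1\}$. A direct computation gives
\begin{equation*}
\E\left[\bsigma_n(0, u)\right] = \eta\left(\ind{k = 1} - \ind{k = 2}\right) \quad \text{and} \quad \mathrm{Var}\left(\bsigma_n(0, u)\right) \leq 1,
\end{equation*}
so that $\E[\bz_n^k(0)] = \bz_\infty^k(0)$ for each $k \in \{1, 2\}$ and $n \geq 1$.

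Next I would use independence of $\set{\bsigma_n(0, u)}{u \in \calV_n^k}$ to bound the variance:
\begin{equation*}
\mathrm{Var}\left(\bz_n^k(0)\right) = \frac{1}{\left(V_n^k\right)^2}\sum_{u \in \calV_n^k}\mathrm{Var}\left(\bsigma_n(0, u)\right) \leq \frac{1}{V_n^k}.
\end{equation*}
Since $V_n^k / n \to v^k > 0$, there exists $c > 0$ with $V_n^k \geq cn$ for all $n$. Chebyshev's inequality then yields, for every $\varepsilon > 0$,
\begin{equation*}
\condp*{\gamma_n\left|\bz_n^k(0) - \bz_\infty^k(0)\right| \geq \varepsilon} \leq \frac{\gamma_n^2}{\varepsilon^2}\mathrm{Var}\left(\bz_n^k(0)\right) \leq \frac{1}{c\varepsilon^2}\cdot\frac{\gamma_n^2}{n},
\end{equation*}
and the right-hand side tends to zero by the hypothesis $\gamma_n / \sqrt{n} \to 0$. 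This gives convergence in probability of $\gamma_n[\bz_n^k(0) - \bz_\infty^k(0)]$ to zero for each $k$, and since the limit is deterministic this is the same as weak convergence in $\R^2$.

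There is no substantive obstacle: because spins within each community are i.i.d. and bounded, the only thing to check is that the chosen normalization $\gamma_n$ is slower than the natural $\sqrt{n}$ fluctuation scale, which is precisely the hypothesis. If a stronger statement were desired (e.g., a nondegenerate Gaussian limit at the critical scale $\gamma_n = \sqrt{n}$), one would invoke the classical CLT for triangular arrays, but for weak convergence to zero the second-moment bound above suffices.
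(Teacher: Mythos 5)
Your proof is correct and follows essentially the same route as the paper's: both exploit the independence of the initial spins within each community and the natural $\sqrt{n}$ fluctuation scale. The only difference is that the paper invokes the central limit theorem to identify a nondegenerate Gaussian limit at scale $\sqrt{n}$ and then notes that multiplying by $\gamma_n/\sqrt{n} \to 0$ forces the limit to zero, whereas you apply Chebyshev's inequality directly to the variance bound $\mathrm{Var}(\bz_n^k(0)) \leq 1/V_n^k$ --- a slightly more elementary version of the same second-moment argument.
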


\begin{proof}
The construction in Algorithm \ref{alg: algorithm} implies that
\begin{equation*}
\bz_n^k(0) = \frac{1}{V_n^k} \sum_{u \in \calV_n^k} I_n^k(u),
\end{equation*}
where the random variables $I_n^k(u)$ are mutually independent and such that
\begin{equation*}
\condp*{I_n^k(u) = 1} = \eta\ind{k = 1} + \frac{1 - \eta}{2} \quad \text{and} \quad \condp*{I_n^k(u) = - 1} = \eta\ind{k = 2} + \frac{1 - \eta}{2}.
\end{equation*}
By the central limit theorem, $\sqrt{n}\left[\bz_n(0) - \bz_\infty(0)\right]$ converges weakly to a bivariate normal distribution as $n \to \infty$. As a result, we conclude that $\gamma_n\left[\bz_n(0) - \bz_\infty(0)\right] \Rightarrow 0$.
\end{proof}

Next we provide the proof of Lemma \ref{lem: representation lemma}.

\begin{proof}[Proof of Lemma \ref{lem: representation lemma}]
Let $S_{\R^2}[0, T]$ be the space of c\`adl\`ag functions from $[0, T]$ into $\R^2$ with the Skorohod-$J_1$ topology. The first two limits in \eqref{eq: weak limits} hold with respect to the topology of uniform convergence over compact sets and thus also with respect to the Skorohod-$J_1$ topology. Because $S_{\R^2}[0, T]$ is separable, the product $S_{\R^2}[0, T] \times S_{\R^2}[0, T] \times \R^2 \times \R^2$ is separable with respect to the product topology. Therefore, the process
\begin{equation*}
\left(\bz_m, \gamma_m\left(\frac{\calM_m^1}{V_m^1}, \frac{\calM_m^2}{V_m^2}\right), \gamma_m\left[\bz_m(0) - \bz_\infty(0)\right], \gamma_m\left(\max_{\sigma \in \Sigma_m\left(\theta, \theta\right)} \calE^1(\sigma), \max_{\sigma \in \Sigma_m\left(\theta, \theta\right)} \calE^2(\sigma)\right)\right)
\end{equation*}
is measurable with respect to the Borel $\sigma$-algebra associated with the product topology; see \cite[Appendix M10]{billingsley2013convergence}. It follows from \cite[Theorem 3.1]{billingsley2013convergence} that the above process converges weakly to $(\bz, 0, 0, 0)$ as $m \to \infty$ because the right-hand sides of the last three limits in \eqref{eq: weak limits} are deterministic processes; see \cite[Lemma 9]{goldsztajn2023load} for further details.

It follows from Skorohod's representation theorem that \eqref{eq: strong limits} holds but considering the Skorohod-$J_1$ topology for the first two limits, instead of the topology of uniform convergence over compact sets. However, the limiting processes in \eqref{seq1: strong limits} and \eqref{seq2: strong limits} are almost surely continuous, so the limits hold with respect to the latter topology as well.
\end{proof}

\end{appendices}

\newcommand{\noop}[1]{}
\bibliographystyle{IEEEtranS}
\bibliography{abbreviated}

\end{document}